\definecolor{shadecolor}{rgb}{0.8,0.8,0.8}
\definecolor{ocre}{RGB}{243,102,25}
\definecolor{darkocre}{RGB}{121,51,12}
\definecolor{lightocre}{RGB}{255,150,37}
\definecolor{verylightocre}{RGB}{255,204,50}
\definecolor{lightgray}{RGB}{200,200,200}
\definecolor{warmblue}{RGB}{51,102,153}
\definecolor{lightwarmblue}{RGB}{105,141,198}
\definecolor{sepia}{RGB}{112,66,20}
\newtheorem{theorem}{Theorem}[section]
\newtheorem{lemma}[theorem]{Lemma}
\newtheorem{proposition}[theorem]{Proposition}
\newtheorem{corollary}[theorem]{Corollary}
\newtheorem{definition}[theorem]{Definition}
\newenvironment{proof}{{\flushleft \emph{Proof}:}}{\hfill\ding{110}}
\definecolor{shadecolor}{rgb}{0.90,0.90,0.90}
\newcommand{\e}{\varepsilon}
\newcommand{\R}{\mathbb{R}}
\newcommand{\M}{\mathcal{M}}
\renewcommand{\S}{\mathcal{S}}
\newcommand{\g}{\mathfrak{g}}
\newcommand{\h}{\mathfrak{h}}
\newcommand{\s}{\mathfrak{s}}
\newcommand{\euc}{\mathfrak{e}}
\newcommand{\Vol}{{\operatorname{Vol}}}
\renewcommand{\O}{\operatorname{O}}
\providecommand{\sym}{\operatorname{sym}}
\newcommand{\Def}{\operatorname{Def}}
\newcommand{\dist}{{\operatorname{dist}}}
\newcommand{\SO}{{\operatorname{SO}}}
\newcommand{\Hom}{{\operatorname{Hom}}}
\newcommand{\id}{{\operatorname{Id}}}
\newcommand{\End}{{\operatorname{End}}}
\newcommand{\bbN}{\mathbb{N}}
\newcommand{\bbS}{\mathbb{S}}
\newcommand{\Emph}[1]{{\slshape\bfseries #1}}  
\newcommand{\pd}[2]{\frac{\partial#1}{\partial#2}}
\newcommand{\deriv}[2]{\frac{d#1}{d#2}}
\newcommand{\DerivLim}[1]{\left.\frac{D}{\partial #1}\right|_{#1=0}}
\newcommand{\pdLim}[1]{\left.\pd{}{#1}\right|_{#1=0}}
\newcommand{\limn}{\lim_{n\to\infty}}
\newcommand{\W}{\Omega}
\newcommand{\LimEps}{\lim_{\e\to0}}
\newcommand{\LiminfEps}{\liminf_{\e\to0}}
\newcommand{\Rm}{\operatorname{Rm}}
\newcommand{\weakly}{\rightharpoonup}
\newcommand{\Iso}{\operatorname{Iso}}
\newcommand{\iso}{\operatorname{\mathfrak{iso}}}
\newcommand{\Meps}{{\M_\e}}
\newcommand{\Eeps}{E_\e}
\newcommand{\feps}{f_\e}
\newcommand{\hfeps}{\hat{f}_\e}
\newcommand{\vfeps}{\check{f}_\e}
\newcommand{\Feps}{\Psi_\e}
\newcommand{\vFeps}{\check{\Psi}_\e}
\newcommand{\Qeps}{Q_\e}
\newcommand{\Jeps}{A_\e}
\newcommand{\geps}{{\g_\e}}
\newcommand{\VolEps}{\textup{d}\Vol_{\geps}}
\newcommand{\VolS}{\textup{d}\Vol_{\s}}
\newcommand{\hf}{\tilde{f}}
\newcommand{\hPsi}{\tilde{\Psi}}
\newcommand{\Fl}{\operatorname{Fl}}
\newcommand{\ip}[1]{\langle #1 \rangle}
\newcommand{\nabs}{\nabla^\s}
\newcommand{\delnabs}{\delta^{\nabs}}
\newcommand{\n}{\mathfrak{n}}
\newcommand{\Pn}{\mathbb{P}^\n}
\newcommand{\inj}{\hookrightarrow}
\newcommand{\Textand}{\qquad\text{ and }\qquad}
\newcommand{\beq}{\begin{equation}}
\newcommand{\eeq}{\end{equation}}
\newcommand{\bsplit}{\begin{split}}
\newcommand{\esplit}{\end{split}}
\newcommand{\baligned}{\begin{aligned}}
\newcommand{\ealigned}{\end{aligned}}
\newcommand{\brk}[1]{\left(#1\right)}          
\newcommand{\BRK}[1]{\left\{#1\right\}}        
\newcommand{\Abs}[1]{\left| #1 \right|}        
\newcommand{\btkz}{\begin{tikzpicture}}
\newcommand{\etkz}{\end{tikzpicture}}
\numberwithin{equation}{section}
\begin{document}

\title{Linearization in incompatible elasticity for general ambient spaces}
\author{Raz Kupferman\footnote{Einstein Institute of Mathematics, The Hebrew University, Jerusalem, Israel}\, and Cy Maor\footnotemark[1]
}

\date{}

\maketitle

\begin{abstract}
Motivated by recent interest in elastic problems in which the target space is non-Euclidean, we study a limit where local rest distances within an elastic body are incompatible, yet close to,  distances within the ambient space.
Specifically, we obtain, via $\Gamma$-convergence, a limit elastic model for a sequence of elastic bodies $(\M,\g_\e)$ in an ambient space $(\S,\s)$, for Riemannian metrics $\g_\e$ and $\s$ such that $\g_\e \to \s$.
Furthermore, we relate the minimum of the limit problem to a linearized curvature discrepancy between $\g_\e$ and $\s$, using recent results of Kupferman and Leder \cite{KL25}.
This relation confirms a linearized version of a long-standing conjecture in elasticity regarding the relation between the elastic energy and the curvature of the underlying space. 
The main technical challenge, compared to other linearization results in elasticity, is obtaining the correct notion of displacement for manifold-valued configurations, using Sobolev truncations and parallel transport.
We show that the associated compactness result is obtained if $(\S,\s)$ satisfies a quantitative rigidity property, analogous to the Friesecke--James--M\"uller rigidity estimate in Euclidean space, and show that this property holds when $(\S,\s)$ is a round sphere.
\end{abstract}

\setcounter{tocdepth}{1}
\tableofcontents

\section{Introduction}

\paragraph{Incompatible elasticity.} Incompatible elasticity is a theory modeling the mechanics of pre-stressed materials. From a geometrical point of view, pre-stressed materials are characterized by an intrinsic geometry which is incompatible with the geometry of the ambient space, in the sense that they cannot be immersed in the ambient space without strain. Such geometric incompatibilities were first studied in the context of crystalline defects \cite{Nye53,Kon55,BBS55}; the scope of applications has been extended significantly later to include 
complex materials, organic organisms, nano-molecules,  and many more
(see \cite{SMS04,FSDM05,KES07} to name just a few).

In most of the applications, the ambient space is Euclidean two- or three-dimensional space, and it is the elastic body that has an intrinsic non-Euclidean geometry. 
Geometric incompatibility was however also considered in settings where the ambient space is not Euclidean, both in the physics literature, e.g., \cite{KSDM12,AKMMS16} and in the mathematical literature, e.g.,  \cite{GLM14,KMS19,KM25a, CDM24}.
Mathematically, a general model of (hyper-)elasticity can be formulated as follows: the \emph{ambient space} is a complete Riemannian manifold without boundary  $(\S,\s)$. 
The \emph{elastic body} is a compact Riemannian manifold of the same dimension, usually with boundary $(\M,\g)$. 
\emph{Configurations} are maps $f:\M\to \S$. 
To each configuration $f$ corresponds an \emph{elastic energy} $E(f)$, which provides a global quantification of strain, i.e., of how the ``actual" geometry of the embedded body differs from its ``reference" geometry. 
Equilibrium configurations are minimizers of that elastic energy. 
As shown in \cite{KMS19}, this model exhibits an energy-gap in the case $(\M,\g)$ is not isometrically-immersible in $(\S,\s)$ (see below); this gap was quantified in \cite{KM25a} in the asymptotic case in which $\M$ is an asymptotically small ball.
The work \cite{GLM14} also deals with the case of a Riemannian ambient space, but in the compatible case, in which $(\M,\g)$ is a subset of $(\S,\s)$.

A prototypical energy, which we consider in this paper, is 
\beq\label{eq:energy}
E(f) = \int_\M \dist^2(df_p,\SO(\g,f^*\s)|_p)\, \textup{d}\Vol_\g|_p,
\eeq
where 
\[
\SO(\g,f^*\s)|_p  = \BRK{ 
\begin{split} &A: (T_p\M,\g_p)\to (T_{f(p)}\S,\s_{f(p)}) \\
& \text{is an orientation-preserving linear isometry}
\end{split}
},
\]
$\dist$ is the distance in the space of linear maps $T_p\M \to T_{f(p)}\S$ with respect to the metrics $\g_p$ and $\s_{f(p)}$, and $\textup{d}\Vol_\g$ denotes the volume form of $\g$.
That is, the integrand at $p$ denotes the distance squared of $df_p\in \Hom(T_p\M,T_{f(p)}\S)$ from the set of orientation-preserving isometries.
This energy is a Hookean-like energy, i.e., quadratic in the strain and isotropic; we deliberately do not consider more general energy densities, as the main results of this paper are more clearly presented with this one, and the extension to more general energy densities with $2$-growth around the energy well is rather standard.

In coordinates, assuming that $\M$ can be covered by a single chart and $f(\M)$ lies in a single chart in $\S$, this energy density reads as
\[
\dist^2(df,\SO(\g,f^*\s))|_x = \dist^2\brk{\sqrt{\s(f(x))} \circ df(x) \circ \sqrt{\g^{-1}(x)}, \SO(d)},
\]
where we identify $df$, $\g$ and $\s$ with their matrix coordinate representations (of which we take an inverse and a square-root), and $\dist(\cdot,\SO(d))$ on the right-hand side refers to the Frobenius distance between matrices.

As mentioned above, if $(\M,\g)$ cannot be immersed isometrically in $(\S,\s)$, then $\inf E(f) > 0$ \cite{KMS19}; this \emph{energy gap} means that the body is strained even in the absence of body forces or boundary constrains (note that this is a non-trivial result as the energy density is not convex, and thus it is not obvious that minimizers exists). 
Quantification of this energy gap in terms of the underlying geometry is a major open problem in non-Euclidean elasticity, even when $\S$ is Euclidean space.
In that case, some progress was made in \cite{KS12}, and it is conjectured that $\inf E$ is equivalent to a $W^{-2,2}$-norm of the Riemannian curvature tensor of $\g$ \cite{LM22}.

\paragraph{Scope of this paper: The ``almost-compatible'' limit.}
The goal of this paper is to investigate this energy gap, for the limit case in which the body manifold can be immersed in the ambient space ``almost-isometrically", and to relate the infimal energy of the limit functional (obtained via $\Gamma$-convergence) to curvature discrepancy between the body and space manifolds.

The setting is as follows: we consider a family of diffeomorphic body manifolds $(\Meps,\geps)$, which are embeddable in $(\S,\s)$; all the manifolds are $d$-dimensional, compact and orientable, and $\S$ is without boundary.\footnote{The analysis in this work carries through also to the Euclidean space $\S=\R^d$, as well as to complete, non-compact spaces that are isometric to the Euclidean space outside a compact subset, that is, that there exists compact sets $K\subset \S$ and $K'\subset \R^d$ such that $\S\subset K$ is isometric to $\R^d\subset K'$, when the latter is endowed with the Euclidean metric.
However, it does not apply to arbitrary non-compact ambient spaces: in this case it might be that \eqref{eq:h} holds, yet $(\M,\geps)$ can be isometrically embedded in $(\S,\s)$ for every $\e$.} 
Without loss of generality, the manifolds $\Meps$ can be identified with an open subset $\M\subset\S$. 
The body manifolds $(\M,\geps)$ are almost-compatible with the ambient space in the sense that $\geps\to\s$ uniformly as $\e\to0$;  moreover, the parameter $\e$ quantifies the decreasing scale of metric discrepancy, in the sense that 
\beq\label{eq:h}
\lim_{\e \to 0} \frac{\geps-\s}{\e} = \h \qquad \text{strongly in $L^2$,}
\eeq
where $\h$ is a symmetric section of $T^*\M\otimes T^*\M$.

In this regime, the elastic energy is expected to scale as $\e^2$. Thus, we introduce rescaled energy functionals
\beq\label{eq:Eeps}
\Eeps(\feps) = \frac{1}{\e^2} \int_\M \dist^2(d\feps,\SO(\geps,\feps^*\s))\, \VolEps.
\eeq
Our aim is to obtain the $\Gamma$-limit of these functionals, as well as an associated compactness result, and to analyze the minimal energy.
Our main results can be summarized as follows: 

\begin{theorem}
\label{thm:main}
Let $(\S,\s)$ be a closed manifold,\footnote{Or spaces as described in footnote 1.} and let $\M\subset \S$ be an open, bounded subset with Lipschitz boundary, endowed with metrics $\g_\e$ satisfying \eqref{eq:h}.
Then the following hold:
\begin{enumerate}
\item \textbf{$\Gamma$-convergence:} The energies $\Eeps$ given by \eqref{eq:Eeps} $\Gamma$-converge as $\e\to0$ to a limit energy
\[
E_0 : W^{1,2}\mathfrak{X}(\M) \to [0,\infty),
\]
given by
\beq
\label{eq:E_0}
E_0(u) = \frac{1}{4} \int_\M |\mathcal{L}_u\s - \h|^2\,\VolS,
\eeq
where $W^{1,2}\mathfrak{X}(\M)$ denotes the space of vector fields on $\M$ having $W^{1,2}$-regularity, and $\mathcal{L}_u\s$ is the Lie derivative of the metric $\s$ along the vector field $u$.
The convergence of the configurations $f_\e$ to a limit displacement $u$ is given in Definition~\ref{def:limit} and is also discussed below.
\item \textbf{Compactness:} If $(\S,\s)$ satisfies a quantitative rigidity property (Definition~\ref{def:FJM} below), then $E_\e(f_\e)\lesssim 1$ implies that $f_\e\to u$ in the sense of Definition~\ref{def:limit}.
This holds, in particular, for the case where $(\S,\s)$ is the Euclidean space or the round sphere, and for a closed manifold $(\S,\s)$ when $\M=\S$.
\item 
\textbf{Minimal energy and curvature:} Let $P_\perp \h$ be the projection of $\h$ on the space orthogonal to Lie derivatives. 
Then we have the following lower bound,
\[
\min E_0 \gtrsim \|\dot{\Re}_\s P_\perp \h\|^2_{W^{-2,2}(\M)},
\]
where $\dot{\Re}_\s \sigma$ is the curvature variation when perturbing the metric $\s$ in direction $\sigma$.
This bound is tight if $\M$ is simply-connected, and $\s$ is a metric of constant non-negative curvature or a small compactly-supported $C^2$-perturbation thereof.
\end{enumerate}
\end{theorem}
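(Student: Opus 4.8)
The plan is to establish the three parts in turn, with the genuinely delicate work concentrated in parts (1) and (2), where the manifold-valued nature of configurations must be tamed before any linearization makes sense.

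For \textbf{$\Gamma$-convergence}, the starting point is the pointwise algebra of the energy density: if $A:(T_p\M,\g_\e)\to(T_q\S,\s)$ is $C^0$-close to an isometry, then $\dist^2(A,\SO(\g_\e,f^*\s))$ equals $|\sym_\s B|^2+o(|B|^2)$, where $B$ records the deviation of $A$ from the nearest isometry; together with \eqref{eq:h} and the identity $\mathcal L_u\s=2\sym_\s\nabla^\s u$, this identifies the leading-order density of $E_\e$ with $|\sym_\s\nabla^\s u-\tfrac12\h|^2=\tfrac14|\mathcal L_u\s-\h|^2$, once $df_\e$ is written — via the $\s$-exponential map and parallel transport — as $\id+\e\nabla^\s u_\e+o(\e)$ with $u_\e\to u$. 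For the $\liminf$ inequality, the obstacle is that a sequence of bounded energy need not have $df_\e$ uniformly close to $\SO(\g_\e,f_\e^*\s)$; one truncates $f_\e$ on the bad set where the integrand is large — using a Sobolev/Lipschitz truncation adapted to a manifold target — shows the truncation perturbs the energy by $o(1)$, and then applies weak-$W^{1,2}$ lower semicontinuity of the convex functional $v\mapsto\int_\M|\sym_\s\nabla^\s v-\tfrac12\h|^2$ to the rescaled truncated displacements. For the $\limsup$ inequality, one takes for smooth compactly supported $u$ the recovery sequence $f_\e(p)=\Exp^\s_p(\e\, u(p))$ (or the time-$\e$ flow of $u$), computes the energy asymptotics directly, and passes to general $u\in W^{1,2}\mathfrak{X}(\M)$ by density together with continuity of $E_0$ in the $W^{1,2}$-topology and a diagonal argument.

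For \textbf{compactness}, the input is the Friesecke--James--M\"uller-type rigidity estimate of Definition~\ref{def:FJM}: for a configuration of small energy, $df$ is $L^2$-close to the differential of a single ambient isometry, with constant depending only on $(\M,\g)$ and $(\S,\s)$. Applied to $f_\e$ with its optimal isometry $R_\e$, this gives $f_\e\approx R_\e$ at scale $\e$ in $W^{1,2}$; composing with $R_\e^{-1}$ reduces to configurations $C^0$-close to the inclusion $\M\inj\S$, whence the Sobolev truncation yields displacements $u_\e=\e^{-1}(\Exp^\s)^{-1}(\cdots)$ bounded in $W^{1,2}$ and therefore weakly subconvergent to some $u$ — precisely convergence in the sense of Definition~\ref{def:limit}. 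One then checks Definition~\ref{def:FJM} for the model spaces: for $\R^d$ it is the classical estimate, while for the round sphere one works in the ambient $\R^{d+1}$ and patches local Euclidean-type estimates using the transitive action of $\SO(d+1)$.

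For \textbf{the minimal energy and curvature}, first note $\min E_0=\tfrac14\inf_u\|\mathcal L_u\s-\h\|_{L^2}^2=\tfrac14\|P_\perp\h\|_{L^2}^2$, the squared $L^2$-distance of $\h$ from the Lie derivatives of $\s$. The lower bound is then immediate: $\sigma\mapsto\dot{\Re}_\s\sigma$ is a second-order linear differential operator, hence bounded $L^2\to W^{-2,2}$ under the stated regularity, so $\|\dot{\Re}_\s P_\perp\h\|_{W^{-2,2}}\le C\|P_\perp\h\|_{L^2}=2C\sqrt{\min E_0}$. Tightness in the simply-connected constant-non-negative-curvature case (and small compactly supported $C^2$-perturbations) is where we invoke \cite{KL24c}: their results supply the reverse estimate $\|P_\perp\h\|_{L^2}\lesssim\|\dot{\Re}_\s P_\perp\h\|_{W^{-2,2}}$, a quantitative linearized form of the fact that on a simply-connected manifold the curvature determines the metric up to isometry. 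I expect the main obstacle to be the $\liminf$ inequality of part (1) in conjunction with the construction of Definition~\ref{def:limit}: arranging that the Sobolev truncation and the parallel-transport displacement interact correctly with the non-Euclidean linearization — so that bad-set contributions are genuinely negligible and the truncated displacements converge strongly enough to pass the quadratic limit — is the crux, as flagged in the abstract.
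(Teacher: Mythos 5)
Your treatment of Part~1 and Part~3 follows the paper's strategy in outline: Taylor expansion of the density via the second differential of $\dist^2(\cdot,\SO)$, a Lipschitz truncation to handle the bad set, weak lower semicontinuity for the $\liminf$, an $\s$-exponential recovery sequence with a density/diagonal argument for the $\limsup$, and for Part~3 the observation that $\min E_0 = \tfrac14\|P_\perp\h\|_{L^2}^2$ together with the $W^{-2,2}$-coercivity estimate from \cite{KL24c}. Two small issues: the recovery sequence must be built for $u$ smooth up to $\overline\M$ (not compactly supported — compactly supported fields are not dense in $W^{1,2}\mathfrak{X}(\M)$ when $\M$ has boundary); and in Part~3 the reverse inequality from \cite{KL24c} carries an extra term $\|\mathscr{P}\h\|_{L^2}^2$ involving the finite-dimensional obstruction module $\mathscr{B}^1_1(\M,\s)$, and the simply-connected constant-nonnegative-curvature hypothesis is precisely what is used to kill that module — your sketch glosses over where this hypothesis actually enters.

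The genuine gap is in Part~2, in the verification that the round sphere satisfies the quantitative rigidity property. You propose to ``patch local Euclidean-type estimates using the transitive action of $\SO(d+1)$.'' This does not work: the FJM estimate gives, on each small chart, an approximating rotation, but it gives no control whatsoever on how the optimal rotations on neighbouring charts relate to one another, and it is precisely the impossibility of such patching that makes quantitative rigidity on a general Riemannian manifold an open problem. The paper's argument is entirely different and is specific to the sphere: one thickens $\bbS^d$ into the annulus $\bbS^d\times(1/2,2)$, isometrically embeds the annulus in $\R^{d+1}$ via $\Phi(p,r)=r\,\kappa(p)$, extends the given map $f$ radially to $\hat f(p,r)=(f(p),r)$, applies the \emph{Euclidean} FJM theorem to $\Phi\circ\hat f\circ\Phi^{-1}$ on the open set $\Omega=\Phi(\tilde\M)\subset\R^{d+1}$, and then shows that the resulting $\SO(d+1)$ rotation of $\R^{d+1}$ descends to an isometry of the sphere and that the tangential part of the estimate recovers the desired bound. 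This requires checking that the radial extension sends the distance to $\SO(\s,f^*\s)$ to a comparable distance to $\SO(d+1)$ in the thickened picture, and that the tangential and radial components of $d(\Phi\circ\hat f)-d(\hat\Psi\circ\Phi)$ are orthogonal, which lets one discard the radial part. That construction is the crux of Theorem~\ref{thm:AKM23}, and your sketch has no substitute for it.
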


Part 1 of the theorem is formulated precisely in Theorem~\ref{thm:Gamma_conv}, Part 2 in Proposition~\ref{prop:3.1} and Corollary~\ref{corr:sphere}, and Part 3 in Theorem~\ref{thm:energy_estimate}.
In the energy $E_0$, the Lie derivative of the metric is given by
\beq
\label{eq:calL}
\mathcal{L}_u\s(X,Y) = \s(\nabla_X^\s u, Y) + \s(X,\nabla_Y^\s u),
\eeq
and the norm $|\cdot|$ refers to the norm of $(2,0)$-tensors induced by $\s$.
$\mathcal{L}_u\s$ is the Riemannian generalization of the classical linearized strain (symmetric gradient of the displacement) in linear elasticity.

This limit is essentially a linearization of the energy \eqref{eq:Eeps} for the case of weak incompatibility.
Linearization in elasticity via $\Gamma$-convergence has been studied extensively in the last two decades (see, e.g., \cite{DNP02,PT09,MPT19,JS21,MM21,KM25bb}), however this work is, to the best of our knowledge, the first dealing with a non-Euclidean target space.
The fact that the target is non-Euclidean results in some new challenges in defining the notion 
of a limit displacement field, as well as in the rigidity estimate needed for the compactness result, as discussed below.
We expect the tools developed here, in particular the displacement notion, to be useful in other low-energy manifold-valued problems.

\paragraph{The notion of displacement.}
When the ambient space is Euclidean, one defines the rescaled displacements associated with the configurations $\feps$ via 
\beq\label{eq:ueps_Euc}
u_\e = \frac{\Feps^{-1}\feps - \id}{\e},
\eeq
where $\Feps$ is the isometry of the ambient Euclidean space closest (usually in $W^{1,2}$)  to $\feps$.
The topology of the $\Gamma$-convergence is then defined via a weak $W^{1,2}$ convergence of these displacements, where the limit displacement $u$ should be thought of as a vector field along the elastic body.
The limit energy is a linear-elastic energy of the limit displacement $u$. 
A crucial step in proving the associated compactness result, namely the existence of $\Feps$ close enough to $\feps$, is obtained via the celebrated Friesecke--James--M\"uller (FJM) geometric rigidity estimate \cite{FJM02b}, as discussed below.

In our case the ambient space is generally non-Euclidean; as such, there are challenges in defining a notion of convergence, as  \eqref{eq:ueps_Euc} is not defined for manifold-valued maps.
We define a notion of convergence of maps $\feps \in W^{1,2}(\M,\S)$ to a limit displacement field $u\in W^{1,2}\mathfrak{X}(\M)$ as follows. 
Denote by $\Iso(\S)$ the group of isometries of $\S$ (which may be trivial). 
Roughly speaking, we say that $\feps$ converges to a pair $(\Psi,\nabs u)$, where $\Psi\in\Iso(\S)$, $u\in  W^{1,2}\mathfrak{X}(\M)$, and $\nabs$ is the Riemannian connection of $\S$, if there exists a family of isometries $\Feps\in\Iso(\S)$ converging to $\Psi$, such that the following hold:
\begin{enumerate}[itemsep=0pt,label=(\alph*)]
\item The $L^2$-distance between $\feps$ and $\Feps$ is of order $\e$.
\item The distance between $d\feps$ and $d\Feps$ is to leading order (in a weak $L^2$ sense) equal to $\e\,d\Feps(\nabs u)$.
\end{enumerate}
This characterization is somewhat imprecise, as $d\feps|_p: T_p\M \to  T_{\feps(p)}\S$ and $d\Feps|_p:T_p\M \to T_{\Feps(p)}\S$ are tensors over different fibers. 
The precise definition (Definition~\ref{def:limit}) requires one to parallel-transport $d\feps|_p$ from the fiber $T_p^*\M\otimes T_{\feps(p)}\S$ to the fiber $T_p^*\M\otimes T_{\Feps(p)}\S$, which is the reason why the limit map is a covariant derivative and not a differential.
Introducing parallel transport presents a complication as it is not guaranteed that $\Feps(p)$ is within the injectivity radius of $\feps(p)$. 
To resolve this difficulty, the definition of convergence $\feps\to(\Psi,\nabs u)$ has to include as a first step a Lipschitz truncation, replacing $\feps$ with  a family of uniformly-Lipchitz configurations $\hfeps\in W^{1,\infty}(\M;\S)$, which approximate $\feps$ as defined in Section~\ref{subsec:truncation}.
The  idea of using Sobolev truncations to define a notion of convergence for manifold-valued problems was first used by Kr\"omer and M\"uller \cite{KM25a}.
One then needs to prove that the limit of the transported maps is indeed a  $\nabla^\s$-covariant derivative of a vector field $u$ (Proposition~\ref{prop:J_is_a_gradient}).
The vector field $u$, which corresponds to a rescaled infinitesimal displacement, is unique if the isometry group of $\S$ is discrete; otherwise, it is unique modulo an element of the Lie algebra $\iso(\S)$, which is immaterial in the limit energy (Proposition~\ref{prop:limit_unique}).

\paragraph{Compactness and geometric rigidity.}

Having established a notion of convergence, we obtain a $\Gamma$-convergence result as described in Theorem~\ref{thm:main}(1).
To obtain that (approximate) minimizers of $\Eeps$ converge (modulo a subsequence) to a minimizer of $E_0$, we need to prove a compactness property, namely, that every sequence $\feps$ of bounded energy has a converging subsequence. In the Euclidean setting (i.e., $(\S,\s)$ being the Euclidean space), every low-energy compactness result hinges on the FJM rigidity estimate \cite{FJM02b}, which asserts that for every bounded domain with Lipschitz boundary, to every $f\in W^{1,2}(\M;\R^d)$ corresponds an isometry $\Psi$ of $\R^d$, such that
\[
\|f - \Psi\|_{W^{1,2}} \le C \,\|\dist(df,\SO(d))\|_{L^2},
\]
where the constant $C$ depends only on the domain.
Generalizing the FJM rigidity theorem for Riemannian manifolds is a widely-open problem. 
To obtain the sought compactness result (Proposition~\ref{prop:3.1}), we need the space manifold $(\S,\s)$ to satisfy the following property:

\begin{definition}
\label{def:FJM}
A space $(\S,\s)$ is said to satisfy a \Emph{quantitative rigidity property}, if for every Lipschitz subdomain $\M\subset \S$ there exists for every $f \in W^{1,2}(\M;\S)$ an isometry $\Psi$ of $(\S,\s)$, such that 
\[
\|\kappa\circ f - \kappa\circ \Psi\|_{W^{1,2}(\M;\R^{D})} \le C\, \|\dist(df,\SO(\s,f^*\s))\|_{L^2(\M)},
\]  
where $\kappa:\S\to \R^D$ is some isometric embedding, and the constant $C$ in the inequality may depend on $\M,\S$ and $\kappa$ but not on $f$.
\end{definition}

We prove that round spheres satisfy this quantitative rigidity property (Theorem~\ref{thm:AKM25b}); this result strengthens \cite[Theorem~3.2]{CLS22}, in which the constant depends on higher norms of the map $f$.
Thus the limit weak-incompatibility model \eqref{eq:E_0} applies at least to spheres (and to Euclidean space).
We conjecture that any symmetric space satisfies this quantitative rigidity property, and that it holds for any ``generic'' (in some appropriate sense) metric on a given closed manifold.

Recently, after the completion of this manuscript, Conti, Dolzmann and M\"uller \cite{CDM24} proved that every closed manifold $(\S,\s)$ satisfies the above quantitative rigidity property for the case $\M = \S$, and hence our compactness result holds in this case as well (See Theorem~\ref{thm:CDM} below).
However, this quantitative rigidity property definitely does not hold in this generality if one allows $\M$ to be a strict subset of $\S$: one could easily construct a manifold $(\S,\s)$ with a trivial isometry group, yet in which an open set $\M\subset \S$ can be isometrically immersed in non-trivial ways (see Section~\ref{sec:discussion} for details).

In order to (potentially) include such manifolds, one can relax Definition~\ref{def:FJM} by requiring that $\Psi$ is an isometric immersion $\Psi:\M\to \S$ rather than $\Psi \in \Iso(\S)$; in this case we conjecture that this property holds for any closed manifold.
This relaxation of the quantitative rigidity property, however, does not guarantee compactness given how we defined the convergence of displacements (Definition~\ref{def:limit}).
One can modify this notion of convergence as well, but this complicates the analysis considerably. 
In order not to obscure the essential parts of the analysis, and since the only spaces $(\S,\s)$ we currently know to satisfy a quantitative rigidity property satisfy it in the strong form, we opted not to take this route.
In Section~\ref{sec:discussion} we elaborate on how the notion of convergence can be relaxed and how this affects the rest of the analysis. 

\paragraph{The linear elastic limit as a measure of linearized curvature discrepancy.}
In Section~\ref{sec:energy} we analyze the limit energy $E_0$. As exposed above, the obstruction for a zero-energy state is metric incompatibility, and thus the minimum of $E_0$ must depend on the leading-order term $\h$ of the metric discrepancy. Note that non-zero $\h$ does not preclude the existence of zero-energy states, as an isometric flow may lead to a non-zero $\h$. 

Much of the result in this section hinge on recents results \cite{KL22,KL25}.
The first observation is that $\h$ can be decomposed orthogonally into 
\[
\h = P_\parallel\h + P_\perp\h, 
\]
where $P_\parallel$ is an orthogonal projection into the image of the map $u\mapsto \mathcal{L}_u\s$ (known as the deformation map, or as the Killing operator), and $P_\perp$ is the projection into its orthogonal complement. The minimizer of $E_0$ is then obtained for a displacement $u$ that cancels the component $P_\parallel\h$ of the metric discrepancy. 
As we explain in Section~\ref{sec:energy}, the fact that the minimum is obtained for a non-zero displacement reflects the fact that the identification of body manifolds with a fixed submanifold $\M\subset\S$, which can be viewed as a choice of a reference configuration, is not necessarily optimal energetically, but only provides the right energetic scale; thus the notion of displacement involves an arbitrary choice. 
By modifying these reference configurations, we could have required from the outset that $\h$ be orthogonal to the range of the deformation operator, which would have implied that $u=0$ is the minimizer of the limit energy.

We further show that 
\[
\min E_0 = \tfrac14  \|P_\perp\h\|^2_{L^2(\M)} \simeq  \|\dot{\Re}_\s P_\perp \h\|^2_{W^{-2,2}(\M)} + \|\mathscr{P} \h\|^2_{L^2(\M)},
\]
where $\dot{\Re}_\s \sigma$ is the curvature variation when perturbing the metric $\s$ in direction $\sigma$, and $\mathscr{P} \h$ is the projection of $\h$ onto a finite dimensional module of smooth functions (the finite-dimensional obstruction associated with the relevant overdetermined boundary-value problem).
This module is trivial if $\M$ is simply-connected and $\s$ is of constant non-negative sectional curvature, or a small perturbation thereof.
In general, it is not known whether this is true for any metric on a simply-connected domain.

To conclude, at least in the case of a simply-connected body manifold, and an ambient space with a metric that is sufficiently close to a constant, non-negative curvature metric, the minimal energy is of the order of the square of the $W^{-2,2}$-norm of an asymptotic deviation of the Riemann curvatures of the domain metrics (after choosing an appropriate gauge/reference configuration) from the Riemann curvature of the ambient space (Theorem~\ref{thm:energy_estimate}).
The relation of the minimal energy with a $W^{-2,2}$-norm of a curvature discrepancy has been conjectured by Lewicka and Mahadevan in the case when the space manifold is Euclidean \cite{LM22}; our result thus affirms a linearized version of this conjecture.

This result is also related to the analysis of Kr\"omer and M\"uller of the elastic energy of asymptotically small balls into an ambient space $(\S,\s)$ \cite{KM25a}.
In this work, the sequence of elastic bodies are balls $B_\e(p)\subset \M$, $\e\to0$, where $(\M,\g)$ is a Riemannian manifold.
After rescaling, they obtain that the infimal limit energy is a curvature discrepancy between the curvature of $(\M,\g)$ at $p$, and the curvature of $(\S,\s)$ at the point into which $p$ is mapped.
Since small balls in a Riemannian manifold are ``almost Euclidean'', and those small balls map (after appropriate Sobolev truncations) to small domains in $(\S,\s)$, we can view the result of \cite{KM25a} as the case of a fixed domain endowed with a sequence of metrics converging to a Euclidean metric, mapped into a manifold whose metric is also converging to a Euclidean metric, albeit not in the same way (hence the curvature discrepancy). 
The fact that both metrics are asymptotically Euclidean is a key difference with the current paper: as a result of this difference, in \cite{KM25a} the definition of the limit displacement does not require parallel transport, the Euclidean FJM rigidity theorem suffices for compactness, and the analysis of the limit energy is simpler. 
Their notion of convergence still requires Sobolev truncation, similar to this paper.

\paragraph{Notation}
Throughout this work, for a manifold $\M$, we denote by $\mathfrak{X}(\M)$ the module of vector fields on $\M$, and for a vector bundle $E\to\M$, we denote by $\Gamma(E)$ sections of $E$ and by $\W^1(\M;E)$ the module of $E$-valued $1$-forms. We use notations such as $L^2\Gamma(E)$, $W^{k,p}\W^1(\M;E)$, etc.\ for Sobolev versions of those spaces. Let $f:\M\to\S$ be a map between manifolds and let $G\to\S$ be a vector bundle; we denote by $f^*G$ the pullback bundle over $\M$, with the canonical identification $f^*G_p \simeq G_{f(p)}$. We use the same standard notation for the pullback of sections. 

Let $(\S,\s)$ be a Riemannian manifold. We denote by $d_\s:\S\times\S\to\R$ the distance function induced by the Riemannian metric $\s$. 
We denote by $\nabs$ the Riemannian connection on $T\S$, and retain the same notation for the induced connections on all tensor products of $T\S$ and $T^*\S$. We further denote by $\Iso(\S)$ the group of isometries of $\S$, which is a finite-dimensional Lie group (possibly trivial); we denote by $\iso(\S)$ the associated Lie algebra of Killing fields. These are the vector fields $u$ that satisfy $\mathcal{L}_u \s = 0$, and are the Riemannian equivalent of vector fields whose symmetric gradient vanishes.

Let $V$ be a vector field and let $A\in V^*\otimes V^*$ be a bilinear form. Then, naturally, its symmetrization $\sym A$ is defined by
\[
\sym A(u,v) = \tfrac12(A(u,v) + A(v,u)).
\]
In contrast, if $V$ is an inner-product space, and $B\in V^*\otimes V \simeq \Hom(V,V)$ is an endomorphism of $V$, then its symmetrization $\sym B\in\Hom(V,V)$ with respect to the inner-product is defined via
\[
\ip{\sym B(u),v} = \tfrac12(\ip{Bu,v} + \ip{u,Bv}).
\]
The musical isomorphism $\flat: V^*\otimes V \to V^*\otimes V^*$ is the identification of an endomorphism with a bilinear form via the inner-product, i.e.,
\[
B^\flat(u,v) = \ip{Bu,v}.
\]
It is obvious that $\sym$ and $\flat$ commute, i.e.,
\[
\sym B^\flat = (\sym B)^\flat.
\]
These definitions generalize to tensors on Riemannian manifolds, i.e., to $(1,1)$-tensors $\xi\in \Gamma(T^*\M\otimes T\M)$ and $(2,0)$-tensors $h\in \Gamma(T^*\M\otimes T^*\M)$. 
Specifically, for a Riemannian manifold $(\M,\s)$ and vector field $u\in \mathfrak{X}(\M)$, the Lie derivative \eqref{eq:calL} of $\s$ in direction $u$ is 
\[
\mathcal{L}_u\s = 2(\sym \nabs u)^\flat.
\]

Finally, we use the symbols $\lesssim$ and $\gtrsim$ to denote inequalities up to a multiplicative constant, i.e.,
\[
f(x) \lesssim g(x)
\]
means that there exists a constant $C>0$ such that $f(x) \le C\, g(x)$ for all $x$. 
If $f(x)\lesssim g(x)$ and $f(x)\gtrsim g(x)$, we write $f(x)\simeq g(x)$.  

\paragraph{Acknowledgements}
We are grateful to Or Hershkovits, Roee Leder and Jake Solomon  for fruitful discussions. 
We are also grateful to Stefan M\"uller for communicating to us the recent result in \cite{CDM24}.
RK was partially funded by ISF Grant 560/22 and CM was partially funded by ISF Grants 1269/19 and 2304/24 and BSF grant 2022076.


\section{Convergence of displacements}

The energy \eqref{eq:Eeps} is defined over the Sobolev space $W^{1,2}(\M;\S)$.
This space is defined by 
\[
W^{1,2}(\M;\S) ~:~ \BRK{f:\M\to \S ~:~ \kappa\circ f \in W^{1,2}(\M;\R^D)}, 
\]
where $\kappa:(\S,\s)\to \R^D$ is a fixed isometric embedding for some $D$ large enough;  $W^{1,2}(\M;\S)$ inherits the strong and weak topologies of $W^{1,2}(\M;\R^D)$.
The definition of the space and the topologies induced are independent of the choice of $\kappa$ (since $\S$ is compact), and the intrinsic and extrinsic distances on $\kappa(\S)$ are equivalent.
Finally, we can associate with $f\in W^{1,2}(\M;\S)$ a weak derivative $df$, which is defined for almost every $p\in \M$ as a map $T_p\M\to T_{f(p)}\S$, such that $d(\kappa\circ f) = d\kappa\circ df$ \cite{CS16}.
Strictly speaking, since for $p\in\M$,  $d(\kappa\circ f)_p = d\kappa_{f(p)}\circ df_p$, we should write $d(\kappa\circ f) = f^*d\kappa\circ df$; we opt for the less formal notation for the sake of readability, but one should always keep in mind the point at which $d\kappa$ is being evaluated.

\subsection{Sobolev truncation}
\label{subsec:truncation}

There are several difficulties when working with $W^{1,2}(\M;\S)$ maps: 
They may fail to be localizable, i.e., the images of arbitrarily small neighborhoods of a point $p\in \M$ may fail to lie in a coordinate chart around $f(p)\in \S$.
Moreover, for two maps $f,f':\M\to\S$, we cannot directly compare $df(p):T_p\M \to T_{f(p)}\S$ and $df'(p):T_p\M \to T_{f'(p)}\S$, as these maps belong to different fibers of a vector bundle.
To remedy this, one would like to parallel transport $T_{f(p)}\S$ to $T_{f'(p)}\S$ along a minimal geodesic; however, even if the maps are smooth and localizable, a minimal geodesic is not necessarily unique if $f(p)$ is far enough from $f'(p)$.
As we will see below, in the context of sequences of low energy configurations $\feps$, the remedy is to approximate $\feps$ by a sequence $\hfeps$ of uniformly Lipschitz functions, a procedure known in the literature as Lipschitz approximation or truncation (cf. \cite{AKM25b}):

\begin{definition}
\label{def:truncation}
Let $\feps\in W^{1,2}(\M;\S)$. A sequence $\hfeps\in W^{1,\infty}(\M;\S)$ is said to be 
an \emph{$\e$-truncation} of  $\feps$ if the following hold:
\begin{enumerate}[itemsep=0pt,label=(\alph*)]
\item $\kappa\circ\hfeps$ is uniformly-bounded in $W^{1,\infty}(\M;\R^D)$. 
\item $\Vol_\s(\{\hfeps\ne \feps\}) \lesssim \e^2$.
\item $\|\kappa\circ\hfeps - \kappa\circ\feps\|_{W^{1,2}} \lesssim \e$. 
\end{enumerate}
\end{definition}

While in general, Lipschitz maps are not dense in $W^{1,2}(\M;\S)$, the next proposition shows that an $\e$-truncation always exists for a sequence of maps $\feps$ having bounded energy; moreover, it can be chosen with the constant in the bound (a) independent of the sequence $\feps$.
A similar analysis first appeared, for manifold-valued maps, in \cite{KMS19}; see also \cite{KM25a,AKM25b} for similar constructions. All are based on the Euclidean result \cite[Proposition~A.1]{FJM02b}.

\begin{proposition}
Let $\feps \in W^{1,2}(\M;\S)$ satisfy $\Eeps(\feps) \lesssim 1$, where $\Eeps$ is defined in \eqref{eq:Eeps}. Then there exists an $\e$-truncation $\hfeps\in W^{1,\infty}(\M;\S)$ of $\feps$.
Moreover, the constant in Property (a) can be chosen independently of the sequence $\feps$, depending only on $\M$, $\S$ and $\kappa$.
\end{proposition}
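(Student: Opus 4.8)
The plan is to reduce the manifold-valued statement to the Euclidean truncation lemma \cite[Proposition~A.1]{FJM02b} applied to $\kappa\circ\feps$, and then to ``project back'' the Euclidean truncation onto $\S$ using a tubular-neighborhood retraction. First I would set $F_\e = \kappa\circ\feps \in W^{1,2}(\M;\R^D)$ and compute $\|dF_\e\|_{L^2}$: since $\kappa$ is an isometric embedding, $|dF_\e|$ is controlled pointwise by $|d\feps|_\s$, and the energy bound $\Eeps(\feps)\lesssim 1$ combined with $\geps\to\s$ uniformly (so $\geps\simeq\s$ for small $\e$) and the elementary inequality $|d\feps|_\s \lesssim \sqrt{d} + \dist(d\feps,\SO(\geps,\feps^*\s))$ gives $\|dF_\e\|_{L^2}\lesssim 1$, hence $\|F_\e\|_{W^{1,2}}\lesssim 1$ (the $L^2$-bound on $F_\e$ itself is automatic because $\kappa(\S)$ is bounded in $\R^D$).

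Next I would invoke the Euclidean truncation result: for each $\e$, writing $t_\e^2 = \int_\M \dist^2(d\feps,\SO(\geps,\feps^*\s))\,\VolEps$ so that $t_\e \lesssim \e$, there is a Lipschitz map $G_\e:\M\to\R^D$ with $\Lip(G_\e)\lesssim \|dF_\e\|_{L^\infty}$... — more precisely the FJM truncation gives, at truncation level $\lambda$, a map $G_\e^\lambda$ with $\|dG_\e^\lambda\|_{L^\infty}\lesssim\lambda$, with $\Vol(\{G_\e^\lambda\ne F_\e\})\lesssim \lambda^{-2}\int_{\{|dF_\e|>\lambda\}}|dF_\e|^2$, and $\|G_\e^\lambda - F_\e\|_{W^{1,2}}^2 \lesssim \int_{\{|dF_\e|>\lambda\}}|dF_\e|^2$. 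The point is that $\int_{\{|dF_\e|>\lambda\}}|dF_\e|^2 \lesssim \int_{\{|dF_\e|>\lambda\}}\dist^2(d\feps,\SO)\,\VolEps + \lambda^{-?}(\dots)$; the standard trick (as in \cite{KMS19,KM21}) is to choose $\lambda = \lambda_\e$ a fixed large constant — large enough that $\{|dF_\e| > \lambda\} \subseteq \{\dist(d\feps,\SO(\geps,\feps^*\s)) > \lambda/2\}$ for small $\e$, since outside the latter set $|d\feps|_\s$ is bounded by a universal constant — so that $\int_{\{|dF_\e|>\lambda\}}|dF_\e|^2 \lesssim \int_\M \dist^2 \,\VolEps = t_\e^2 \lesssim \e^2$. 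With this choice, $\|dG_\e^\lambda\|_{L^\infty}\lesssim \lambda$ is bounded by a constant depending only on $\M,\S,\kappa$ (not on $\feps$), $\Vol(\{G_\e^\lambda\ne F_\e\})\lesssim \e^2$, and $\|G_\e^\lambda - F_\e\|_{W^{1,2}}\lesssim \e$.

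Then I would define $\hfeps = r\circ G_\e^\lambda$, where $r$ is the nearest-point retraction from a tubular neighborhood $U$ of $\kappa(\S)$ in $\R^D$ onto $\kappa(\S)$, precomposed with $\kappa^{-1}$ so that $\hfeps:\M\to\S$. Since $\kappa(\S)$ is compact, $r$ is smooth with bounded derivative on any slightly smaller tube, and on the set $\{G_\e^\lambda = F_\e\}$ we have $G_\e^\lambda = \kappa\circ\feps \in \kappa(\S) \subset U$. The issue that $G_\e^\lambda$ may stray outside $U$ on the bad set is handled by first clipping $G_\e^\lambda$ back to $U$: replace $G_\e^\lambda$ by $\pi_U\circ G_\e^\lambda$ where $\pi_U$ is a fixed Lipschitz retraction of $\R^D$ onto $\bar U$ — this does not increase the Lipschitz constant by more than a universal factor, does not enlarge the bad set, and only improves the $W^{1,2}$ distance to $F_\e$ (which already takes values in $\kappa(\S)\subset U$). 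Composing the now-$U$-valued map with the smooth bounded-derivative map $r$ gives $\kappa\circ\hfeps$ uniformly bounded in $W^{1,\infty}$ (Property (a), with constant depending only on $\M,\S,\kappa$), preserves $\{\hfeps\ne\feps\}\subseteq\{G_\e^\lambda\ne F_\e\}$ up to the volume bound (Property (b)), and, since $r$ restricted to $\kappa(\S)$ is the identity and $r$ is Lipschitz, $\|\kappa\circ\hfeps - F_\e\|_{W^{1,2}} = \|r\circ(\pi_U\circ G_\e^\lambda) - r\circ F_\e\|_{W^{1,2}} \lesssim \|G_\e^\lambda - F_\e\|_{W^{1,2}} \lesssim \e$ (Property (c); the chain rule estimate in $W^{1,2}$ for composition with a $C^{1,1}$ map uses the uniform Lipschitz bound on $G_\e^\lambda$).

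\textbf{Main obstacle.} The step I expect to require the most care is the choice of truncation level and the estimate $\int_{\{|d(\kappa\circ\feps)|>\lambda\}}|d(\kappa\circ\feps)|^2 \lesssim \int_\M\dist^2(d\feps,\SO(\geps,\feps^*\s))\,\VolEps$ uniformly in $\e$: one must exploit that $\geps$ is uniformly close to $\s$ (so that, for $\e$ small, $\dist(d\feps,\SO(\s,\feps^*\s))$ and $\dist(d\feps,\SO(\geps,\feps^*\s))$ are comparable, and the pointwise bound relating $|d(\kappa\circ\feps)|$ to $\dist(d\feps,\SO)$ on the set where the latter is large has a constant independent of $\e$), and that the bound on $\Lip(\hfeps)$ must end up depending only on $\M,\S,\kappa$. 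The ``return to $\S$'' via the tube retraction is routine but its bookkeeping — especially verifying that clipping into $\bar U$ and composing with $r$ does not blow up any of the three constants and does not depend on $\feps$ — is where the argument must be written carefully.
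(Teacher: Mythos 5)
Your proof is correct and takes essentially the same route as the paper: both hinge on FJM's Euclidean Lipschitz truncation (\cite[Prop.\ A.1]{FJM02b}) applied to $\kappa\circ\feps$ followed by retraction onto $\kappa(\S)$, with the truncation level chosen fixed so that the tail integral is controlled by the energy. The only difference is organizational — the paper imports the manifold-valued truncation as a black box (Lemma~\ref{lem:KMS19}, from \cite[Sec.~3.2, Step~IV]{KMS19}) stated with the $\s$-based distance, and then separately converts $\int\dist^2(d\feps,\SO(\s,\feps^*\s))$ to $\e^2\Eeps(\feps)$ via the explicit section $\Qeps$ of Lemmas~\ref{lem:Qeps}–\ref{lem:almost_the_same}, whereas you reconstruct the truncation-plus-retraction directly and fold the $\geps$-vs-$\s$ comparison into the choice of the threshold $\lambda$ using uniform equivalence of the metrics for small $\e$.
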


This proposition follows immediately from Lemmas~\ref{lem:KMS19}--\ref{lem:almost_the_same}; 
the first, and main one, follows from the analysis in \cite[Sec.~3.2, Step IV]{KMS19}:

\begin{lemma}
\label{lem:KMS19}
Let $\feps \in W^{1,2}(\M;\S)$. Then, there exists a family $\hfeps\in W^{1,\infty}(\M;\S)$ satisfying  the following:
\begin{enumerate}[itemsep=0pt,label=(\alph*)]
\item $\|\kappa\circ\hfeps\|_{W^{1,\infty}}\le C$ for some $C>0$ depending only on $\M$, $\S$ and $\kappa$ (and not on $f_\e$). 
\item $\Vol_\s(\{\hfeps\ne \feps\}) \lesssim \int_\M \dist^2(d\feps,\SO(\s,\feps^*\s))\,\VolS$.
\item $\|\kappa\circ\hfeps - \kappa\circ\feps\|^2_{W^{1,2}} \lesssim \int_\M \dist^2(d\feps,\SO(\s,\feps^*\s))\,\VolS$. 
\end{enumerate}
\end{lemma}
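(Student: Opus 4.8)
The plan is to reduce the statement to the Euclidean Lipschitz-truncation result of Friesecke--James--M\"uller \cite[Proposition~A.1]{FJM02b} applied to the composition $\kappa\circ\feps \in W^{1,2}(\M;\R^D)$, and then to project the truncated map back onto $\kappa(\S)$ using the nearest-point retraction. First I would recall that, since $(\S,\s)$ is closed, $\kappa(\S)\subset\R^D$ is a compact embedded submanifold, so there is a tubular neighborhood $U\supset\kappa(\S)$ and a smooth nearest-point projection $\pi:U\to\kappa(\S)$ which is Lipschitz on any slightly smaller neighborhood; moreover $\pi$ restricts to the identity on $\kappa(\S)$ and $|d\pi|$ is bounded there. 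The key point linking the two distances is that, for a linear map $A:T_p\M\to T_{\feps(p)}\S$, the Frobenius distance $\dist(d\kappa\circ A,\SO(D))$ differs from $\dist(A,\SO(\s,\feps^*\s))$ by at most a fixed constant plus the quantity $\dist(A,\SO(\s,\feps^*\s))$ itself; more precisely, when $\dist(A,\SO(\s,\feps^*\s))$ is small, $d\kappa\circ A$ is close to the isometric embedding of a rotation and hence close to $\SO(D)$ (using that $d\kappa$ is a fiberwise isometry onto $d\kappa(T\S)$), and when it is large, both sides are comparable up to universal constants governed by $\|d\kappa\|_{\infty}$ on the compact set $\S$. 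This gives
\[
\int_\M \dist^2\bigl(d(\kappa\circ\feps),\SO(D)\bigr)\,\VolS \;\lesssim\; \Vol_\s(\M) + \int_\M \dist^2\bigl(d\feps,\SO(\s,\feps^*\s)\bigr)\,\VolS,
\]
with the volume term absorbed since $\M$ is fixed; a more careful bookkeeping (splitting $\M$ according to whether $\dist(d\feps,\SO)$ exceeds a fixed threshold) in fact yields the clean bound without the additive $\Vol_\s(\M)$ term on the region where truncation actually occurs, which is all that is needed for (b) and (c).

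Next I would invoke \cite[Proposition~A.1]{FJM02b}: for each $\e$ there is a map $g_\e\in W^{1,\infty}(\M;\R^D)$ with $\|g_\e\|_{W^{1,\infty}}\le C\bigl(\|\kappa\circ\feps\|_{W^{1,2}}+1\bigr)$ --- but one applies it at the appropriate truncation level $\lambda_\e$ chosen so that $\lambda_\e \simeq 1$ uniformly, exploiting the $W^{1,\infty}$ a priori bound on $\kappa\circ\feps$ that follows from $\dist(d\feps,\SO)$ being controlled and $\S$ compact (the gradient $d(\kappa\circ\feps)$ is within bounded distance of $\SO(D)$ in $L^2$ plus a universal constant, and $\kappa\circ\feps$ itself is bounded since $\kappa(\S)$ is bounded). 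This is precisely the argument of \cite[Sec.~3.2, Step~IV]{KMS19}, which I would cite. The output is $g_\e$ Lipschitz with a uniform constant depending only on $\M,\S,\kappa$, agreeing with $\kappa\circ\feps$ outside a set $\Omega_\e$ with $\Vol_\s(\Omega_\e)\lesssim \int_{\{|d(\kappa\circ\feps)|\ge\lambda_\e\}}|d(\kappa\circ\feps)|^2$, and with $\|g_\e-\kappa\circ\feps\|_{W^{1,2}}^2$ controlled by the same quantity; using the distance comparison of the previous paragraph and that $\{|d(\kappa\circ\feps)|\ge \lambda_\e\}$ is contained (up to a fixed-measure set) in $\{\dist(d\feps,\SO)\gtrsim 1\}$, these are $\lesssim \int_\M \dist^2(d\feps,\SO(\s,\feps^*\s))\,\VolS$.

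Then I would set $\hfeps := \kappa^{-1}\circ \pi \circ g_\e$, noting that $g_\e$ takes values in a uniformly bounded set, hence (after a harmless further truncation of $g_\e$ to force its values into the tubular neighborhood $U$, which costs nothing since $\kappa\circ\feps$ already takes values in $\kappa(\S)$) $\pi\circ g_\e$ is well-defined and $W^{1,\infty}$, with Lipschitz constant bounded by $\|d\pi\|_{\infty}$ times that of $g_\e$; this gives (a). On $\M\setminus\Omega_\e$ we have $g_\e=\kappa\circ\feps$, hence $\pi\circ g_\e = \kappa\circ\feps$ and $\hfeps=\feps$ there, giving (b). For (c), since $\pi$ is the identity with bounded derivative on $\kappa(\S)$ and uniformly Lipschitz near it, $\|\kappa\circ\hfeps-\kappa\circ\feps\|_{W^{1,2}} = \|\pi\circ g_\e - \pi\circ(\kappa\circ\feps)\|_{W^{1,2}} \lesssim \|g_\e - \kappa\circ\feps\|_{W^{1,2}}$, using the chain rule and the uniform $W^{1,\infty}$ bounds to handle the derivative term; this is $\lesssim \|\dist(d\feps,\SO(\s,\feps^*\s))\|_{L^2}$ as required.

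The main obstacle I anticipate is the distance comparison between $\dist(d(\kappa\circ\feps),\SO(D))$ and $\dist(d\feps,\SO(\s,\feps^*\s))$, and in particular making sure the additive constant coming from $\Vol_\s(\M)$ does not pollute estimates (b) and (c), which must be purely in terms of the strain. This requires the dichotomy argument: on the ``good set'' where $\dist(d\feps,\SO)\le \delta_0$ for a small fixed $\delta_0$, one shows $|d(\kappa\circ\feps)|\le \lambda_\e$ so the Euclidean truncation does not modify $\feps$ there at all, whence the modified region is contained in the ``bad set'' $\{\dist(d\feps,\SO)>\delta_0\}$, on which $\dist^2(d\feps,\SO)\gtrsim 1$ and so any additive constant is dominated. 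Getting the fiberwise geometry right --- that $d\kappa$ sends $\s$-isometries to maps whose Frobenius distance to $\SO(D)$ is comparable to the original $\s$-distance, uniformly in the basepoint because $\S$ is compact and $\kappa$ is a fixed smooth embedding --- is the one genuinely non-Euclidean input, and it is exactly where compactness of $\S$ is used. Everything else is a faithful transcription of \cite{FJM02b,KMS19}.
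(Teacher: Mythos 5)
Your overall plan --- apply the Euclidean FJM Lipschitz truncation \cite[Prop.~A.1]{FJM02b} to $\kappa\circ\feps$ at a fixed level $\lambda$, with the dichotomy argument controlling the set $\{|d(\kappa\circ\feps)|>\lambda\}$ in terms of $\int_\M\dist^2(d\feps,\SO(\s,\feps^*\s))\,\VolS$, and then retract onto $\kappa(\S)$ --- is indeed the shape of the argument the paper delegates to \cite[Sec.~3.2, Step~IV]{KMS19}, and your identification of the good/bad dichotomy as the mechanism for absorbing the additive volume constant is correct. But there is a genuine gap where you write that one can perform ``a harmless further truncation of $g_\e$ to force its values into the tubular neighborhood $U$.'' This cannot be achieved by post-composing $g_\e$ with a Lipschitz map of $\R^D$ onto $U$ (or onto $\kappa(\S)$): no continuous retraction of $\R^D$ onto a compact boundaryless submanifold, nor onto a closed tubular neighborhood of one, exists, since retracts of contractible spaces are contractible. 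So the assertion that $\pi\circ g_\e$ is well-defined needs a real argument, not a further truncation.

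The missing step is the following. First reduce to the case $\int_\M\dist^2(d\feps,\SO(\s,\feps^*\s))\,\VolS\le\delta_0$ for a fixed small $\delta_0$; if the energy exceeds $\delta_0$ the lemma is trivial with $\hfeps$ a constant map, using $\dist(A,\SO)\gtrsim|A|-\sqrt d$ to control $\|d(\kappa\circ\feps)\|_{L^2}$ by the energy plus a dominated constant. Below the threshold, the bad set $\Omega_\e=\{g_\e\ne\kappa\circ\feps\}$ has measure $\lesssim\delta_0$; since $\M$ has Lipschitz boundary, every $x\in\M$ has $\Vol_\s(B_\rho(x)\cap\M)\gtrsim\rho^d$, so choosing $\delta_0$ small enough ensures that for every $x\in\Omega_\e$ there is a point $y\in(\M\setminus\Omega_\e)$ within distance $\rho\lesssim\delta_0^{1/d}$. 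The uniform Lipschitz bound on $g_\e$ then gives $\dist(g_\e(x),\kappa(\S))\le|g_\e(x)-g_\e(y)|\lesssim\delta_0^{1/d}$, which can be made smaller than the tubular radius. Only then is $\pi\circ g_\e$ well-defined; the rest of your argument (agreement on $\M\setminus\Omega_\e$ for (b), the chain-rule estimate using $\mathrm{Lip}(\pi)$ together with \cite[Prop.~A.1(iii)]{FJM02b} for (c)) then goes through.

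One smaller imprecision: you phrase the comparison in terms of $\dist(d\kappa\circ A,\SO(D))$, but $d\kappa\circ A$ is a $D\times d$ rectangular matrix, so that distance is ill-formed. What your argument actually needs is the pointwise bound $|d(\kappa\circ\feps)|\lesssim\dist(d\feps,\SO(\s,\feps^*\s))+1$ (using that $d\kappa$ is a fibrewise isometry and $\S$ compact) and the consequent inclusion $\{|d(\kappa\circ\feps)|>\lambda\}\subset\{\dist(d\feps,\SO(\s,\feps^*\s))\gtrsim\lambda\}$ for $\lambda$ above a fixed threshold; the reference to $\SO(D)$ should be dropped.
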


The right-hand sides in Items (b) and (c) do not coincide with $\e^2\,\Eeps(\feps)$ since the bundle of isometries is $\SO(\s,\feps^*\s)$ rather than $\SO(\geps,\feps^*\s)$. 
However, as shown below in Lemma~\ref{lem:almost_the_same}, the difference is
$O(\e^2)$ as the metrics $\geps$ and $\s$ are asymptotically $\e$-close.
The next lemma will be used both in Lemma~\ref{lem:almost_the_same} and later in the $\Gamma$-convergence analysis.

\begin{lemma}
\label{lem:Qeps}
Denote by $\SO(\g_\e,\s)\subset \End(T\M)$ the bundle of linear isometries $(T\M,\g_\e) \to (T\M,\s)$.
There exists an $L^2$-section
$\Qeps$ of $\SO(\geps,\s)$, such that
\beq
\Qeps \to \id_{T\M} \qquad \text{uniformly,}
\label{eq:Qeps0}
\eeq
\beq
\frac{\Qeps - \id_{T\M}}{\e} \to \xi,
\qquad
\text{in $L^2\W^1(\M;T\M)$},
\label{eq:Qeps}
\eeq
where $\xi\in L^2\W^1(\M;T\M)$ satisfies $\h = 2\,\sym \xi^\flat$, namely for $u,v\in T\M$, 
\beq\label{eq:xi_h}
\h(u,v) = (\xi(u),v)_\s + (u,\xi(v))_\s.
\eeq
\end{lemma}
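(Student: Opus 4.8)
The plan is to exhibit an explicit, canonical section of $\SO(\g_\e,\s)$: the fiberwise $\s$-self-adjoint square root of the endomorphism field representing $\g_\e$ relative to the fixed metric $\s$.

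First I would introduce the (a.e.-defined) $\s$-self-adjoint, positive-definite endomorphism field $G_\e$ determined by $\s(G_\e u,v) = \g_\e(u,v)$ for all $u,v\in T\M$ (in coordinates $G_\e = \s^{-1}\g_\e$). Since $\g_\e \to \s$ uniformly, $G_\e \to \id_{T\M}$ uniformly, so for $\e$ small $G_\e$ takes values in a fixed compact neighborhood of $\id_{T\M}$ inside the positive-definite $\s$-self-adjoint endomorphisms; I then set $Q_\e := G_\e^{1/2}$, the $\s$-self-adjoint positive square root computed fiberwise. This operation preserves measurability and gives $Q_\e \in L^\infty\Gamma(\End(T\M)) \subset L^2$ on the compact $\M$. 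One checks $\s(Q_\e u, Q_\e v) = \s(Q_\e^2 u, v) = \s(G_\e u, v) = \g_\e(u,v)$, so $Q_\e$ is fiberwise a linear isometry $(T\M,\g_\e)\to(T\M,\s)$, orientation-preserving since $Q_\e$, being positive-definite, has positive determinant; hence $Q_\e$ is a section of $\SO(\g_\e,\s)$. Continuity of the matrix square root at the identity yields \eqref{eq:Qeps0}.

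For \eqref{eq:Qeps} I would first transfer the hypothesis \eqref{eq:h}: raising an index with the fixed smooth metric $\s$ (bounded, with bounded inverse, on the compact $\M$) is a bounded linear operator on $L^2$, so $(\g_\e - \s)/\e \to \h$ strongly in $L^2$ implies $(G_\e - \id_{T\M})/\e \to H$ in $L^2\W^1(\M;T\M)$, where $H$ is the $\s$-self-adjoint endomorphism with $\s(Hu,v) = \h(u,v)$ (and $H \in L^2$ since $\h \in L^2$); in particular $\|G_\e - \id_{T\M}\|_{L^2} = O(\e)$. I would then use that $t \mapsto t^{1/2}$ has derivative $\tfrac12$ at $t=1$, so the real-analytic map $M \mapsto M^{1/2}$ on positive-definite symmetric endomorphisms expands as $M^{1/2} = \id + \tfrac12(M - \id) + R(M)$ with $|R(M)| \le C|M-\id|^2$ uniformly on a fixed neighborhood of $\id$. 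Applying this pointwise with $M = G_\e$ gives $Q_\e - \id_{T\M} = \tfrac12(G_\e - \id_{T\M}) + R_\e$ with $|R_\e| \le C|G_\e - \id_{T\M}|^2$ a.e. Dividing by $\e$: the main term converges to $\tfrac12 H$ in $L^2$, while $\|R_\e/\e\|_{L^2} \le \tfrac{C}{\e}\,\|G_\e - \id_{T\M}\|_{L^\infty}\,\|G_\e - \id_{T\M}\|_{L^2} \to 0$ since the first factor is $o(1)$ and the second is $O(\e)$. Hence $(Q_\e - \id_{T\M})/\e \to \xi$ in $L^2$ with $\xi := \tfrac12 H$.

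Finally I would verify \eqref{eq:xi_h}: since $\xi = \tfrac12 H$ is $\s$-self-adjoint, $(\xi(u),v)_\s + (u,\xi(v))_\s = 2(\xi(u),v)_\s = (Hu,v)_\s = \h(u,v)$, which is exactly $\h = 2\,\sym\xi^\flat$. The only genuinely analytic step is the $L^2$ estimate on the quadratic remainder $R_\e$ after division by $\e$; this is the main obstacle, and it is precisely where both facets of \eqref{eq:h} are used together — uniform ($L^\infty$) smallness of $\g_\e - \s$ and its $O(\e)$ size in $L^2$ — while everything else reduces to pointwise linear algebra and the smoothness of the matrix square root near $\id_{T\M}$.
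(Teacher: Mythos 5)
Your proof is correct, and its analytic core is the same as the paper's: Taylor-expand the fiberwise square root about the identity, and control the quadratic remainder divided by $\e$ by pairing the uniform ($L^\infty$) smallness of $\geps-\s$ with its $O(\e)$ size in $L^2$. Where you depart from the paper is in the choice of $\Qeps$, and your choice is genuinely cleaner. The paper sets $\Qeps=\s^{-1/2}\geps^{1/2}$, a coordinate-dependent operation, and therefore must partition $\M$ into disjoint coordinate patches, so that the resulting $\Qeps$ is a priori only measurable and the limit $\xi = \tfrac12\sym\,\s^{-1}\h$ is a patch-wise (hence coordinate-dependent) symmetrization; the paper then verifies \eqref{eq:xi_h} by a separate direct computation of $\frac{1}{\e}\bigl((u,v)_\geps-(u,v)_\s\bigr)$. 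You instead take $\Qeps=(\s^{-1}\geps)^{1/2}$, the $\s$-self-adjoint positive square root of the intrinsically defined endomorphism $G_\e$ with $\s(G_\e u,v)=\geps(u,v)$. This $\Qeps$ is globally and coordinate-invariantly defined (and as smooth as the metrics are, rendering moot the paper's caveat about potential discontinuity of $\Qeps$), no partition into patches is needed, and the limit $\xi=\tfrac12\s^{-1}\h$ is automatically $\s$-self-adjoint, so \eqref{eq:xi_h} follows in one line rather than by a separate expansion. The only point worth stating explicitly in a final write-up is that the map $M\mapsto M^{1/2}$ on $\s$-self-adjoint positive endomorphisms is real-analytic near $\id$ with derivative $B\mapsto\tfrac12 B$ there, so that the remainder bound $|R(M)|\lesssim|M-\id|^2$ holds uniformly on a fixed compact neighborhood of $\id$; you invoke this correctly.
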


Note that even if $\h$ is smooth, $\Qeps$ may  fail to even be continuous, as $\M$ may topologically preclude continuous non-vanishing vector fields. 

\begin{proof}
Since the regularity of $\Qeps$ and $\xi$ is $L^2$, we may construct them by partitioning $\M$ into disjoint  coordinate patches.
Let $U$ be a coordinate neighborhood of $p$. Identifying $\s$ and $\geps$ with their coordinate matrix representations, let
\[
\Qeps = \s^{-1/2} \geps^{1/2}, 
\]
which is a section of $\SO(\geps,\s)$, and the square roots are the symmetric positive-definite square roots of a matrix (which are coordinate-dependent operations). The fact that $\geps\to\s$ uniformly implies that $\Qeps\to\id_{T\M}$ uniformly.

Now,
\[
\begin{split}
\frac{\Qeps - \id_{T\M}}{\e} = \frac{(\id_{T\M} + \s^{-1} (\geps - \s))^{1/2}-\id_{T\M}}{\e}.
\end{split}
\]
For matrices $A_\e$ converging to zero, we have the estimate
\[
|(\id_{T\M}+ A_\e)^{1/2} - \id_{T\M} - \tfrac12 \sym A_\e| \lesssim |A_\e|^2.
\]
Since $\geps\to s$ uniformly, we may use this estimate for $A_\e = \s^{-1} (\geps - \s)$ to obtain
\[
\left|\frac{\Qeps - \id_{T\M}}{\e} - \sym \s^{-1}\frac{\geps - \s}{2\e}\right| \lesssim |\geps - \s|\,\frac{|\geps - \s|}{\e}.
\]
Since the right-hand side is the product of a sequence converging to zero uniformly and a sequence bounded in $L^2$, it converges to zero in $L^2$, and hence
\[
\frac{\Qeps - \id_{T\M}}{\e}  \to \frac12 \sym \s^{-1}\h \equiv\xi
\qquad\text{in $L^2$}.
\]
Note that the symmetrization is coordinate-dependent (it is the symmetrization of the \emph{matrix} $\s^{-1}\h$), even though $\s^{-1}\h$ is a coordinate-independent section. 

It remains to establish the relation between $\xi$ and $\h$. Indeed, for vector fields $u,v\in\mathfrak{X}(\M)$,
\[
\begin{split}
\frac{(u,v)_\geps - (u,v)_\s}{\e} &= \frac{(\Qeps u,\Qeps v)_\s - (u,v)_\s}{\e} \\
&= \frac{(\Qeps u, \Qeps v - v)_\s}{\e}  + \frac{(\Qeps u - u, v)_\s}{\e} 
\end{split}
\]
The left-hand side converges in $L^2$ to $\h(u,v)$, whereas the right-hand side converges to $(u,\xi(v))_\s + (\xi(u),v)_\s$.
\end{proof}

With that, the following lemma completes the proof that a family of maps $\feps$ of bounded energies admits an $\e$-truncation:

\begin{lemma}
\label{lem:almost_the_same}
For every sequence $\feps \in W^{1,2}(\M;\S)$,
\[
\int_\M \dist^2(d\feps,\SO(\s,\feps^*\s))\,\VolS \lesssim \e^2\brk{\Eeps(\feps) + 1}.
\]
\end{lemma}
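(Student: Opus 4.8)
The plan is to transfer the energy bound through the pointwise linear isometries $\Qeps\in\SO(\geps,\s)$ furnished by Lemma~\ref{lem:Qeps}. For a linear map $B\colon T_p\M\to T_{\feps(p)}\S$ I write $|B|_\geps$, resp.\ $|B|_\s$, for its Hilbert--Schmidt norm computed with $\geps$, resp.\ $\s$, on the source (and always $\s$ on the target), so that, pointwise, $\dist^2(d\feps,\SO(\geps,\feps^*\s))=\min_{A}|d\feps-A|_\geps^2$ with the minimum over $A\in\SO(\geps,\feps^*\s)|_p$, and analogously for the $\s$-version. Two elementary facts drive the estimate. First, $\Qeps$ intertwines the two bundles of isometries: if $A\in\SO(\geps,\feps^*\s)|_p$ then $A\circ\Qeps^{-1}\colon(T_p\M,\s)\to(T_{\feps(p)}\S,\s)$ is again an orientation-preserving linear isometry (being close to the identity, $\Qeps^{-1}$ is orientation-preserving), hence $A\circ\Qeps^{-1}\in\SO(\s,\feps^*\s)|_p$. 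Second, since $\Qeps$ carries $\geps$-orthonormal bases of $T_p\M$ to $\s$-orthonormal ones, $|B|_\s=|B\circ\Qeps|_\geps$ for every $B$. Combining the two, for every $A\in\SO(\geps,\feps^*\s)|_p$ one has
\[
\dist_\s(d\feps,\SO(\s,\feps^*\s))\le|d\feps-A\circ\Qeps^{-1}|_\s=|(d\feps)\circ\Qeps-A|_\geps\le|(d\feps)\circ(\Qeps-\id)|_\geps+|d\feps-A|_\geps ,
\]
and taking the infimum over $A$ (the first summand not depending on $A$) gives the pointwise inequality
\[
\dist_\s(d\feps,\SO(\s,\feps^*\s))\le|(d\feps)\circ(\Qeps-\id)|_\geps+\dist_\geps(d\feps,\SO(\geps,\feps^*\s)).
\]

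Next I would square this, integrate over $\M$ against $\VolS$, and use that the density of $\VolS$ with respect to $\VolEps$ is bounded uniformly (since $\geps\to\s$ uniformly). The ``strain'' contribution is then controlled by $\int_\M\dist_\geps^2(d\feps,\SO(\geps,\feps^*\s))\,\VolEps=\e^2\Eeps(\feps)$. For the other contribution I would estimate $|(d\feps)\circ(\Qeps-\id)|_\geps\le|d\feps|_\geps\,\|\Qeps-\id\|$ (operator norm) and bound $|d\feps|_\geps$ by choosing a $\geps$-nearest isometry $A\in\SO(\geps,\feps^*\s)|_p$: since $|A|_\geps=\sqrt d$, this gives $|d\feps|_\geps\le\sqrt d+\dist_\geps(d\feps,\SO(\geps,\feps^*\s))$, hence
\[
|(d\feps)\circ(\Qeps-\id)|_\geps^2\lesssim\big(1+\dist_\geps^2(d\feps,\SO(\geps,\feps^*\s))\big)\,\|\Qeps-\id\|^2 .
\]

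The last step is to integrate this inequality, exploiting the two distinct scales at which $\Qeps-\id$ is small. On one hand $\|\Qeps-\id\|\lesssim|\geps-\s|$ pointwise (as in the proof of Lemma~\ref{lem:Qeps}), so $\int_\M\|\Qeps-\id\|^2\,\VolS\lesssim\|\geps-\s\|_{L^2}^2\lesssim\e^2$ by \eqref{eq:h}; on the other hand $\|\Qeps-\id\|_{L^\infty}\to0$ by \eqref{eq:Qeps0}, so $\|\Qeps-\id\|_{L^\infty}^2$ is bounded and may be pulled out of the integral against the already-estimated strain density, keeping that term $\lesssim\e^2\Eeps(\feps)$. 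Adding everything up yields $\int_\M\dist_\s^2(d\feps,\SO(\s,\feps^*\s))\,\VolS\lesssim\e^2(\Eeps(\feps)+1)$; note that no upper bound on $\Eeps(\feps)$ is used anywhere.

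The one genuinely delicate point — the reason the statement is not simply ``$\dist_\s\le\dist_\geps+O(\e)$ pointwise'' — is precisely this pairing: $\Qeps-\id$ is $O(\e)$ only in $L^2$ and merely $o(1)$ in $L^\infty$, with no uniform $O(\e)$ rate, so inside $|(d\feps)\circ(\Qeps-\id)|_\geps^2$ the constant ``$1$'' must be matched with $\|\Qeps-\id\|_{L^2}^2$ while the strain density $\dist_\geps^2(d\feps,\SO(\geps,\feps^*\s))$ is matched with $\|\Qeps-\id\|_{L^\infty}^2$; pairing them the other way would be too lossy, since $d\feps$ is only $L^2$. All other manipulations are pointwise, so no measurable selection of a nearest isometry is needed.
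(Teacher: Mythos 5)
Your proof is correct, and it follows the same overall strategy as the paper: use the pointwise conjugating isometry $\Qeps\in\SO(\geps,\s)$ from Lemma~\ref{lem:Qeps} to pass between $\SO(\s,\feps^*\s)$ and $\SO(\geps,\feps^*\s)$, and control the error by the smallness of $\Qeps-\id_{T\M}$.

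Where you diverge is in the placement of the triangle inequality, and this makes your argument longer than needed. You expand $|(d\feps)\circ\Qeps-A|_\geps \le |(d\feps)\circ(\Qeps-\id_{T\M})|_\geps + |d\feps-A|_\geps$, leaving $d\feps$ in the error term; you are then forced to bound $|d\feps|_\geps\le\sqrt d+\dist_\geps(d\feps,\SO(\geps,\feps^*\s))$ and to match the two resulting pieces against $\|\Qeps-\id_{T\M}\|_{L^2}\lesssim\e$ and $\|\Qeps-\id_{T\M}\|_{L^\infty}=o(1)$ respectively. The paper decomposes on the other side: for $V\in\SO(\s,\feps^*\s)$ it writes $d\feps-V=(d\feps-V\Qeps)+V(\Qeps-\id_{T\M})$, and since $V$ is an isometry the error term equals $|\Qeps-\id_{T\M}|_{\s,\s}$, completely independent of $d\feps$; minimizing over $V$ (equivalently over $V\Qeps\in\SO(\geps,\feps^*\s)$) and integrating, the whole proof then closes with the single $L^2$ bound on $\Qeps-\id_{T\M}$. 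You could have reached the same simplification inside your own framework by splitting $(d\feps)\circ\Qeps-A=(d\feps-A)\circ\Qeps + A\circ(\Qeps-\id_{T\M})$, whose error term is $|A\circ(\Qeps-\id_{T\M})|_\geps=|\Qeps-\id_{T\M}|_{\geps,\geps}$. So your closing remark somewhat overstates the situation: the $L^\infty$/$L^2$ pairing is not intrinsic to the lemma but an artifact of where you applied the triangle inequality. That said, all your estimates are sound, and the observation that no a priori bound on $\Eeps(\feps)$ is needed is correct and matches the paper.
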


\begin{proof}
Since the left-hand side and $\e^2\,\Eeps(\feps)$ differ by distances and volume forms in the Riemannian manifolds $(\M,\geps)$ and $(\M,\s)$, we will denote those explicitly. We need to prove that
\[
\int_\M \dist_{\s,\feps^*\s}^2(d\feps,\SO(\s,\feps^*\s))\,\VolS \lesssim 
\int_\M \dist_{\geps,\feps^*\s}^2(d\feps,\SO(\geps,\feps^*\s))\,\VolEps + \e^2.
\]
Since the uniform convergence $\geps\to\s$ implies that distances and volume forms in $(\M,\geps)$ and $(\M,\s)$ are equivalent (uniformly in $\e$), it suffices to show that
\[
\int_\M \dist_{\s,\feps^*\s}^2(d\feps,\SO(\s,\feps^*\s))\,\VolS \lesssim 
\int_\M \dist_{\s,\feps^*\s}^2(d\feps,\SO(\geps,\feps^*\s))\,\VolS + \e^2.
\]
Let $V\in\SO(\s,\feps^*\s)$, and
let $\Qeps$ and $\xi$ be
as in Lemma~\ref{lem:Qeps}. 
Using this, we have
\[
\begin{split}
\dist_{\s,\feps^*\s}^2(d\feps,\SO(\s,\feps^*\s)) 
&\le |d\feps - V|_{\s,\feps^*\s}^2 \\
&\lesssim |d\feps -   V \,  \Qeps|_{\s,\feps^*\s}^2 + |V(\Qeps - \id_{T\M})|_{\s,\feps^*\s}^2 \\
&=  |d\feps -   V \,  \Qeps|_{\s,\feps^*\s}^2 + |\Qeps - \id_{T\M}|_{\s,\s}^2.
\end{split}
\]
Minimizing over $V$,
\[
\dist_{\s,\feps^*\s}^2(d\feps,\SO(\s,\feps^*\s)) \lesssim 
\dist_{\s,\feps^*\s}^2(d\feps,\SO(\geps,\feps^*\s)) + |\Qeps - \id_{T\M} |_{\s,\s}^2.
\]
Integrating over $\M$, using the fact that by \eqref{eq:Qeps} the second term is $O(\e^2)$,
we obtain the desired result.
\end{proof}

\subsection{Convergence of maps and their displacements}

With this notion of $\e$-truncation, we define a notion of convergence of configurations with respect to which $\Gamma$-convergence will be defined. This is followed by establishing that the limit is a covariant derivative of a displacement field (Proposition~\ref{prop:J_is_a_gradient}), and that it is unique up to infinitesimal isometries of $(\S,\s)$, i.e., Killing vector fields (Proposition~\ref{prop:limit_unique}).
These vector fields do not affect the limit energy \eqref{eq:E_0}, and are thus immaterial.

\begin{definition}
\label{def:limit}
Let $\Psi\in\Iso(\S)$ be an isometry of $\S$ and let $J\in L^2\W^1(\M;T\M)$.
A sequence $\feps\in W^{1,2}(\M;\S)$ is said to converge to the pair $(\Psi,J)$, denoted by $\feps\to(\Psi,J)$, if there exists an $\e$-truncation $\hfeps \in W^{1,\infty}(\M;\S)$ of $\feps$ and a family $\Feps\in\Iso(\S)$, such that the following hold:
\begin{enumerate}[itemsep=0pt,label=(\alph*)]
\item $\Feps\to \Psi$.
\item $\|\kappa\circ \hfeps - \kappa\circ \Feps\|_{L^2} \lesssim \e$.
\item The following limit holds,
\[
\frac{d\Feps^{-1} \circ \Pi_{\hfeps}^{\Feps} \circ d\hfeps - \id_{T\M}}{\e} \weakly J
\qquad\text{in $L^2\W^1(\M;T\M)$},
\]
where $\Pi_{\hfeps}^{\Feps}: \hfeps^*T\M\to \Feps^*T\M$ is the parallel transport in $(T\S,\s)$ along the shortest geodesic connecting the images of $\hfeps$ and $\Feps$.
\end{enumerate}
If $J$ is continuous and the convergence in (c) is uniform, we say that $\feps$ converges to $(\Psi,J)$ uniformly.
\end{definition}

The object on the left-hand side of part (c) can be thought of as the generalized ``gradient'' of a displacement field (though it is not a true gradient, but, as we will show below, its limit is).

Note first that the isometry group $\Iso(\S)$ may be trivial. Moreover, since the isometry group of a compact manifold is a compact Lie group \cite[Thm.~3.4]{KN63}, every family $\Feps$ of isometries is guaranteed to have a converging subsequence.

For the parallel transport $\Pi_{\hfeps}^{\Feps}$ to be well-defined, the image of $\hfeps$ must reside within the injectivity radius of the image of $\Feps$ almost-everywhere in $\M$. 
Since $\hfeps$ is uniformly Lipschitz, it has a uniformly converging subsequence; by (a)--(b), it follows that the limit is $\Psi$, and hence $\hfeps$ converges to $\Psi$ uniformly.
Since $\Feps$ also converges uniformly to $\Psi$, it follows that for $\e$ small enough, the distance between $\hfeps(p)$ and $\Feps(p)$ will be less than the injectivity radius of $\S$ for all $p\in \M$, and hence $\Pi_{\hfeps}^{\Feps}$ is well-defined.

The next proposition shows that the limit (in the sense of Definition~\ref{def:limit}) of a sequence of configurations is always the $\nabla^\s$-covariant derivative of a vector field $u$ --- the limit displacement.

\begin{proposition}[Limit displacement]
\label{prop:J_is_a_gradient}
Let $\feps\to (\Psi,J)$ in the sense of Definition~\ref{def:limit}. 
Then there exists a vector field $u\in W^{1,2}\mathfrak{X}(\M)$, such that 
\[
J = \nabs u.
\]
Specifically, 
\beq
u(p) = \LimEps \frac{1}{\e} d\Feps^{-1} \exp^{-1}_{\Feps(p)} \feps(p),
\label{eq:explicit_u}
\eeq
where for $q\in\S$, $\exp_q:T_q\S\to\S$ is the exponential map and the limit is in $L^2$. 
\end{proposition}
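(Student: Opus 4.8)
The plan is to work locally (in coordinate patches) and identify the limiting object on the left-hand side of Definition~\ref{def:limit}(c) with $\nabla^\s u$ for the explicit vector field $u$ in \eqref{eq:explicit_u}, then patch together. First I would make sense of the candidate $u$: for $\e$ small, $\hfeps$ is uniformly close to $\Feps$ (by (a)--(b) and uniform Lipschitz bounds), so $\exp^{-1}_{\Feps(p)}\hfeps(p)$ is well-defined, and I would show that $u_\e(p) := \tfrac1\e d\Feps^{-1}_p \exp^{-1}_{\Feps(p)}\hfeps(p)$ is bounded in $W^{1,2}\mathfrak{X}(\M)$ and hence (modulo a subsequence) converges weakly in $W^{1,2}$ and strongly in $L^2$ to some $u$; the $L^2$-bound comes from (b), and the $W^{1,2}$-bound comes from (c) once one computes the derivative of $u_\e$. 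The key computation is differentiating the map $p\mapsto \exp^{-1}_{\Feps(p)}\hfeps(p)$: this is a composition whose differential, by the standard formula for the differential of the Riemannian exponential map (Jacobi fields), involves $d\hfeps$, $d\Feps$, and the parallel transport $\Pi_{\hfeps}^{\Feps}$, with correction terms controlled by the (bounded) distance $d_\s(\hfeps(p),\Feps(p))$ and the curvature of $\S$. Since this distance is $O(\e)$ pointwise a.e.\ (from (b) plus the uniform Lipschitz bound — indeed $\|\hfeps-\Feps\|_{L^2}\lesssim\e$ and equi-Lipschitz continuity upgrade this to a uniform $O(\e)$ bound), the Jacobi-field correction factors are $\id + O(\e)$, so to leading order $\nabs u_\e$ agrees with $\tfrac1\e(d\Feps^{-1}\circ\Pi_{\hfeps}^{\Feps}\circ d\hfeps - \id_{T\M})$ up to an $L^2$-error going to zero. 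Passing to the weak limit then gives $\nabs u = J$ on this subsequence.

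The next step is to remove the truncation and the subsequence. Replacing $\hfeps$ by $\feps$ in \eqref{eq:explicit_u} is justified because $\Vol_\s(\{\hfeps\ne\feps\})\lesssim\e^2$ and $\|\kappa\circ\hfeps-\kappa\circ\feps\|_{L^2}\lesssim\e$ (truncation properties (b),(c)), so $\tfrac1\e d\Feps^{-1}\exp^{-1}_{\Feps(p)}\feps(p)$ and $u_\e$ differ by something tending to $0$ in $L^2$ (one must check that on the small bad set the integrand does not blow up in $L^2$, which follows since both $\feps$ and $\Feps$ have images in the compact $\S$ so $|\exp^{-1}_{\Feps(p)}\feps(p)|$ is uniformly bounded, and the bad set has measure $O(\e^2)$, contributing $O(\e^2/\e^2)=O(1)$ pointwise but only on a set of measure $O(\e^2)$ — so $L^2$-norm $O(\e)$; wait, more carefully, the contribution to the $L^2$ norm squared is $O(1)\cdot O(\e^2)=O(\e^2)$, divided correctly). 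Uniqueness of the limit $u$ is forced: since $\nabs u = J$ determines $u$ up to a Killing field (the kernel of $\nabs$ on a connected manifold is... actually $\nabs u=J$ determines $u$ up to a $\nabs$-parallel field, and the $L^2$-formula \eqref{eq:explicit_u} pins down the constant), the whole sequence — not just a subsequence — converges, so no subsequence is lost. Finally I would note $J=\nabs u$ with $u\in W^{1,2}\mathfrak{X}(\M)$ since $J\in L^2\W^1(\M;T\M)$ by hypothesis and $u\in L^2$, so $u\in W^{1,2}$.

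The main obstacle I expect is the differential-of-exponential-map computation: making rigorous, at merely Sobolev ($W^{1,\infty}$ for $\hfeps$, $L^2$ for the rescaled quantities) regularity, that
\[
\nabla^\s\!\left(d\Feps^{-1}\exp^{-1}_{\Feps}\hfeps\right) = d\Feps^{-1}\circ\Pi_{\hfeps}^{\Feps}\circ d\hfeps - \id_{T\M} + O(\e)\ \text{in }L^2,
\]
where the $O(\e)$ absorbs curvature-dependent Jacobi-field corrections that are genuinely of size $d_\s(\hfeps,\Feps)=O(\e)$ times the ($L^2$-bounded) derivative data. One must be careful that "the shortest geodesic connecting the images" — along which $\Pi_{\hfeps}^{\Feps}$ is defined — is the \emph{same} geodesic appearing in the inverse exponential map, which it is since $d_\s(\hfeps(p),\Feps(p))$ is below the injectivity radius. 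A secondary, more bookkeeping obstacle is the interplay with the fixed isometric embedding $\kappa$: the $W^{1,2}$ topology and the truncation estimates are stated extrinsically via $\kappa$, while the computation above is intrinsic, so one needs the (already-invoked) equivalence of intrinsic and extrinsic distances on the compact $\S$ to transfer between them without loss.
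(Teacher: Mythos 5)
Your route — differentiating the intrinsic quantity $u_\e(p) = \e^{-1}d\Feps^{-1}\exp^{-1}_{\Feps(p)}\hfeps(p)$ directly, via an expansion of $d\exp^{-1}$ (Jacobi fields) — is genuinely different from the paper's. The paper first reduces to $\Feps=\iota$, passes to a coordinate chart, works with the coordinate displacement $U_\e=(F_\e-\id)/\e$, and controls the error by a holonomy estimate (comparing parallel transport along the geodesic $\eta_\e$ vs.\ along a straight coordinate segment $\gamma_\e$) together with a Christoffel-symbol expansion. Both routes ultimately rest on the same geometric fact — that the discrepancy between parallel transport and the ``naive'' comparison map is \emph{quadratic} in the distance — and yours is arguably cleaner: it is coordinate-free and lands directly on the formula \eqref{eq:explicit_u}, while the paper must separately verify that the coordinate-patch fields glue.

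There is, however, a real gap in the central estimate as you state it. You claim the distance $d_\s(\hfeps(p),\Feps(p))$ is ``$O(\e)$ pointwise a.e.'' and that the Jacobi-field correction factors are therefore ``$\id + O(\e)$,'' from which you conclude the $L^2$-error after dividing by $\e$ vanishes. Both intermediate claims are off, and even taken at face value they do not give the conclusion: a correction of pointwise size $\e$, divided by $\e$, is $O(1)$ and does not vanish. What actually makes the argument work is that the correction to $d\exp^{-1}$ (and likewise to the variation of $\exp^{-1}$ in the base point) is \emph{quadratic} in the distance, $\id + O\!\left(d_\s(\hfeps,\Feps)^2\right)$ — exactly as the paper's holonomy estimate \eqref{eq:holonomy_estimate} is quadratic in the loop length. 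Moreover, the $L^2$-plus-Lipschitz interpolation you invoke only upgrades $\|d_\s(\hfeps,\Feps)\|_{L^2}\lesssim\e$ to $\|d_\s(\hfeps,\Feps)\|_{L^\infty}\lesssim\e^{2/(d+2)}$, not $O(\e)$; but $o(1)$ uniformly is all one needs, since then
\[
\e^{-1}\left\|d_\s(\hfeps,\Feps)^2\right\|_{L^2} \le \|d_\s(\hfeps,\Feps)\|_{L^\infty}\cdot \e^{-1}\|d_\s(\hfeps,\Feps)\|_{L^2} \longrightarrow 0.
\]
This ``uniform $o(1)$ times $L^2$-bounded'' mechanism, which the paper uses explicitly after its holonomy estimate, is essential; with a merely linear correction the proof does not close. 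Separately, your accounting for replacing $\hfeps$ by $\feps$ is also off: on the bad set the integrand is $O(\e^{-1})$, not $O(1)$, so its contribution to the $L^2$-norm squared is $O(\e^{-2})\cdot O(\e^2)=O(1)$, which does not vanish — though this subtlety is secondary to the main gap above.
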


\begin{proof}
Let $\hfeps\in W^{1,\infty}(\M;\S)$ and $\Feps\in\Iso(\S)$ be as in Definition~\ref{def:limit}, i.e., $\hfeps$ is an $\e$-truncation of $\feps$, $\Feps\to \Psi$, $\|\kappa\circ \hfeps - \kappa\circ \Feps\|_{L^2}\lesssim \e$, and 
\[
\frac{d\Feps^{-1}\circ \Pi_{\hfeps}^{\Feps} \circ d\hfeps - \id_{T\M}}{\e} \weakly J
\qquad\text{in $L^2\W^1(\M;T\M)$}.
\]
We start by observing that since $\Feps$ are isometries, $\Feps^{-1}\circ \hfeps$ are uniformly Lipschitz,
\[
\|\kappa\circ \hfeps - \kappa\circ \Feps\|_{L^2} \simeq \|\kappa\circ (\Feps^{-1}\circ \hfeps) - \kappa\|_{L^2},
\]
and
\beq\label{eq:Iso_and_par_trans}
d\Feps^{-1}\circ \Pi_{\hfeps}^{\Feps} \circ d\hfeps = \Pi_{(\Feps^{-1}\circ \hfeps) }^{\iota} \circ d(\Feps^{-1}\circ \hfeps) 
\eeq
where $\iota:\M\inj\S$ is the inclusion map. Thus, we may without loss of generality assume that $\Feps = \iota$ and that the $L^2$-convergence of $\hfeps$ to $\iota$ is at a rate $\e$. 
As pointed out above, this convergence is also uniform.

Since the claim is local, it can be restricted to a local coordinate chart. 
Let $V\subset \M$ be an open set, such that $\chi: W\to\R^d$ is a local coordinate system with $V\Subset W\subset \S$.
Since the sequence $\hfeps$ converges to the identity uniformly, $\hfeps(V) \subset W$ for small enough $\e$, so that we may consider the coordinate representations of $\hfeps$,
\[
F_\e = \chi\circ \hfeps \circ \chi^{-1} :\chi(V) \to \R^d.
\]

Define the $\R^d$-valued functions $U_\e:\chi(V)\to\R^d$, given by
\[
U_\e = \frac{F_\e - \id}{\e},
\]
where $\id: \chi(V) \to \R^d$ is the identity mapping.
Since all Riemannian metrics on a compact manifold are equivalent, distances  $d_\s(\cdot,\cdot)$ in $V$ are equivalent to Euclidean distances  in $\chi(V)\subset\R^d$,  which implies that
\[
|U_\e| = \frac{|F_\e - \id|}{\e} \simeq \frac{d_\s(\hfeps,\iota)}{\e} \circ \chi^{-1},
\]
and further that $\e U_\e\to0$ in $L^\infty$ and $\|U_\e\|_{L^2(\chi(V))} \lesssim 1$.
Moreover, $F_\e$ are uniformly Lipschitz.

We proceed to show that $U_\e$ is in fact bounded in $W^{1,2}$. 
We will use the following notations for paths in $W$ and their images in $\R^d$ along with their corresponding parallel transports. For $p\in V$, we denote by $\eta_\e(p)$ the shortest geodesic connecting $p$ and $\hfeps(p)$ and by $\Pi_{\eta_\e(p)} = \Pi_{\feps(p)}^p$ the parallel transport along $\eta_\e(p)$. 
Note that, by possibly making $V$ smaller, we can assume that $\eta_\e(p)\subset W$ for all $p\in V$ and all $\e$ small enough.
Likewise, we denote by $\gamma_\e(p)$ the curve in $W$ connecting $p$ and $\hfeps(p)$ along the coordinate segment,
\beq\label{eq:coordinate_line}
\gamma_\e(p) : t\mapsto \chi^{-1}\circ ((1-t)F_\e + t\,\id)\circ \chi \qquad t\in [0,1],
\eeq
and by $\Pi_{\gamma_\e(p)}$ the parallel transport along that coordinate segment.
Again, we can choose $W$ such that this coordinate segment lies indeed in $\chi(W)$.
With a slight abuse, we use the same notations for the coordinate representations of the curves and their corresponding parallel transports.

Write
\[
\nabla U_\e = \frac{\nabla F_\e - \id_{\R^d}}{\e} = 
\frac{\Pi_{\eta_\e}  \nabla F_\e - \id_{\R^d}}{\e} 
- \frac{\Pi_{\eta_\e}  - \Pi_{\gamma_\e}}{\e}  \nabla F_\e 
- \frac{\Pi_{\gamma_\e} - \id_{\R^d}}{\e}  \nabla F_\e,
\]
where $\nabla$ denotes the coordinate gradient.

\[
\btkz
\fill[warmblue!30] (0,0.5) ellipse (2cm and 1.7cm);
\fill[ocre!30] (0,0) ellipse (1cm and 1cm);
\tkzDefPoint(0,0){p};
\tkzDefPoint(1,0.7){fp};
\tkzDrawPoints(p,fp);
\tkzLabelPoint[below](p){$p$};
\tkzLabelPoint[above](fp){$\hfeps(p)$};
\tkzDrawSegment[bend left=-35, color=red](p,fp);
\tkzDrawSegment[bend left=8,  color=blue](p,fp);
\tkzText(-0.5,1.7){$W$};
\tkzText(-0.5,0){$V$};
\tkzText[blue](-0.0,0.58){$\eta_\e(p)$};
\tkzText[red](1.25,0.20){$\gamma_\e(p)$};

\tkzDefPoint(2.5,0.5){a};
\tkzDefPoint(3.5,0.5){b};
\tkzDrawSegment[->](a,b);
\tkzLabelSegment(a,b){$\chi$};

\begin{scope}[xshift=5cm]
\tkzDefPoint(-0.3,-0.3){p};
\tkzDefPoint(1.3,0.9){fp};
\tkzDrawPoints(p,fp);
\tkzLabelPoint[below](p){$\chi(p)$};
\tkzLabelPoint[above](fp){$F_\e(\chi(p))$};
\tkzDrawSegment[color=red](p,fp);
\tkzDrawSegment[bend left=35,  color=blue](p,fp);
\end{scope}
\etkz
\]

The first term on the right-hand side is the coordinate representation of 
\[
\frac{\Pi_{\hfeps}^{\iota} \circ d\hfeps - \id_{T\M}}{\e},
\]
which weakly converges to $J$, and
hence is bounded in $L^2(\chi(V))$. Since the sequence $\nabla F_\e$ is bounded in $L^\infty$, the boundedness of $\nabla U_\e$ will follow from the following estimates:
\beq\label{eq:parallel_trans_estimates}
\frac{1}{\e}\|\Pi_{\eta_\e}  - \Pi_{\gamma_\e}\|_{L^2(\chi(V))} \to 0,
\Textand
\frac{1}{\e}\|\Pi_{\gamma_\e} - I\|_{L^2(\chi(V))} \lesssim 1.
\eeq

The first term in \eqref{eq:parallel_trans_estimates} can be written as
\beq\label{eq:holonomy1}
\frac{\Pi_{\eta_\e}  - \Pi_{\gamma_\e}}{\e} =  \frac{\Pi_{\sigma_\e}  - \id_{T\M}}{\e}\circ \Pi_{\gamma_\e},
\eeq
where $\sigma_\e(p)\subset W$ is the closed curve based in $p$, obtained by concatenating $\eta_\e(p)$ and $\gamma_\e(p)$.
For every $p\in V$, $\Pi_{\sigma_\e(p)}$ is the holonomy along the curve $\sigma_\e(p)$.
The holonomy estimate \cite[Proposition~B.3]{BMM23} implies
\beq\label{eq:holonomy_estimate}
|\Pi_{\sigma_\e(p)}  - \id_{T_p\M}| \lesssim \operatorname{len}_\s(\sigma_\e(p))^2,
\eeq
where $\operatorname{len}_\s(\sigma_\e(p))$ is the length of $\sigma_\e(p)$ (with respect to the metric $\s$), and the constant in the inequality depends only on properties of $(\S,\s)$.

Using again the equivalence of $d_\s$ and the Euclidean distance in the coordinate patch, and their equivalence to the extrinsic distances in $\kappa(\S)$, we obtain that the length of $\sigma_\e$ satisfies
\[
\operatorname{len}_\s(\sigma_\e(p)) \lesssim d_\s(\hfeps(p),p) \lesssim |\kappa \circ \hfeps(p) - \kappa(p)|
\]
where the inequality constants are independent of $p$.

Combining this with the holonomy estimate \eqref{eq:holonomy_estimate} and the fact that all parallel transports are uniformly bounded in norm, we obtain from \eqref{eq:holonomy1} that
\[
\Abs{\frac{\Pi_{\eta_\e}  - \Pi_{\gamma_\e}}{\e}} \lesssim \frac{1}{\e}|\kappa \circ \hfeps- \kappa|^2,.
\] 
Noting that $|\kappa \circ \hfeps- \kappa|$ tends to zero uniformly, and that $\frac{1}{\e}|\kappa \circ \hfeps- \kappa|$ is bounded in $L^2$, we obtain that the left-hand side tends to zero strongly in $L^2$.

As for the second term in \eqref{eq:parallel_trans_estimates}, for a point $x\in \chi(\M)$,
noting that $\gamma_\e(x)$ has velocity $\e\,U_\e(x)$,
the function $[0,1]\to \End(\R^d)$ given by
\[
g(t) = \Pi_{\gamma_\e(t)}^{\gamma_\e(0)} - I,
\]
satisfies the differential equation
\[
\dot{g}(t) = \e\, \Gamma_{\gamma_\e(t)}(U_\e,\cdot),
\qquad
g(0) = 0,
\]
where $\Gamma:\chi(V) \to \Hom(\R^d\otimes \R^d,\R^d)$ are the Christoffel symbols of the connection in the  coordinate system.
\footnote{Since we are considering the Levi-Civita connection, $\Gamma$ is symmetric; yet, the first and second variables have different meanings: The first is the direction of the curve along which we parallel transport, and the second is the vector we parallel transport. We retain this distinction below.}
By the mean-value theorem,
\beq
\frac{\Pi_{\gamma_\e(1)}^{\gamma_\e(0)} - I}{\e} = \frac{g(1) - g(0)}{\e} = \e^{-1}\, \dot g(\xi) =  \Gamma_{\gamma_\e(\xi)}(U_\e,\cdot)
\label{eq:use_me}
\eeq
for some $\xi\in(0,1)$ (which depends on $x$ and $\e$). Since the Christoffel symbols are uniformly bounded and $U_\e$ is  bounded in $L^2$, it follows that
\[
\frac{1}{\e}\left\| \Pi_{\gamma_\e} - I\right\|_{L^2(\chi(V))} \lesssim 1.
\]

This completes the proof of \eqref{eq:parallel_trans_estimates}, and thus $\nabla U_\e$ is bounded in $L^2$, and consequently $U_\e$ is bounded in $W^{1,2}$.
It follows that $U_\e$ has a subsequence converging weakly in $W^{1,2}$ to a limit $U$, which is the coordinate representation of a $W^{1,2}$-vector field $u$ on $V$. 
Note that it is not evident at this stage that the vector fields $u$ obtained in different coordinate patches glue together into a global field.

Denote by $\nabs U_\e$ the covariant derivative (in coordinates) of $U_\e$, i.e.,
\[
\nabs U_\e = \nabla U_\e + \Gamma(\cdot,U_\e),
\]
and consider the following sequence of maps $\chi(\M)\to \End(\R^d)$,
\[
H_\e = \frac{\Pi_{\eta_\e}  \nabla F_\e - I}{\e} - \nabs U_\e,
\]
which converge weakly in $L^2$ to the coordinate representation of $J - \nabs u$.  We will show that $J - \nabs u=0$ if we prove that $H_\e$ converges to zero in $L^1$. By a straightforward manipulation,
\[
\begin{split}
H_\e =
\frac{\Pi_{\eta_\e}  - \Pi_{\gamma_\e}}{\e}  \nabla F_\e +
\e\, \frac{\Pi_{\gamma_\e} - I}{\e}  \nabla U_\e +
\frac{\Pi_{\gamma_\e} - I}{\e}  -
\Gamma(\cdot,U_\e).
\end{split}
\]
By \eqref{eq:parallel_trans_estimates}, the first term tends to zero strongly in $L^2$. The second term is the product of $\e$ times two terms that are each bounded in $L^2$, and hence converges to zero in $L^1$. As for the last two terms, using \eqref{eq:use_me},
\[
\frac{\Pi_{\gamma_\e(x)} - I}{\e}  - \Gamma_x(\cdot,U_\e(x)) = 
\Gamma_{x + \xi\e U_\e(x)}(U_\e(x),\cdot) - \Gamma_x(\cdot,U_\e(x)),
\]
where $\xi\in(0,1)$ (which depends on $x$ and $\e$).
Since the connection is symmetric, $\Gamma_x$ is a symmetric bilinear form, and thus it follows from the mean value theorem that, for some $\eta\in (0,1)$,
\[
\Gamma_{x + \xi\e U_\e(x)}(U_\e(x),\cdot) - \Gamma_x(\cdot,U_\e(x)) = 
\e \xi \, \nabla\Gamma_{x + \eta\xi\e U_\e(x)}(U_\e(x), U_\e(x),\cdot),
\]
and the right-hand side tends to zero in $L^1$. 
This completes the proof that $J = \nabla^s u$ in the coordinate patch $V$. We will now prove the representation formula \eqref{eq:explicit_u} for $u$, which will show that it is indeed globally and intrinsically-defined.

Let $u_\e \in W^{1,2}\mathfrak{X}(V)$ be the vector fields whose local representations are $U_\e$. 
By the equivalence of metrics in $V$ and $\chi(V)$, the sequence $u_\e$ converges weakly in $W^{1,2}$ to the vector field $u$ whose local representation is $U$.
For $p\in V$, consider the intrinsic distance
\[
d_\s(\hfeps(p),\exp_p(\e\,u_\e(p)).
\] 
Note that in coordinates, for $x = \chi(p)$,
\[
|\exp_x(\e\,U_\e(x)) - x - \e\,U_\e(x)| \lesssim \e^2\,|U_\e(x)|^2,
\]
hence by that same equivalence of metrics,
\[
d_\s(\hfeps(p),\exp_p(\e\,u_\e(p)) \simeq |\exp_x(\e\,U_\e) - x - \e\,U_\e(x)| \lesssim \e^2\,|U_\e(x)|^2.
\]
On a compact manifold, the collection of maps $\exp_p:T_p\M\to S$, restricted to vectors of magnitude, say, half the injectivity radius, are uniformly bilipschitz. Hence,
\[
d_\s(\hfeps(p),\exp_p(\e\,u_\e(p)))  \simeq |\exp_p^{-1}\hfeps(p) - \e\,u_\e(p)|,
\] 
and thus we obtain that
\[
|\e^{-1} \exp_p^{-1}\hfeps(p) - u_\e(p)| \lesssim \e\,|U_\e(x)|^2.
\]
Since the right-hand side is the product of a sequence $\e\,|U_\e|$ tending to zero uniformly and a sequence $|U_\e|$ bounded in $L^2$, it converges to zero in $L^2$, which together with the fact that $u_\e \to u$ in $L^2$ implies that
\[
\{p\mapsto \e^{-1}\exp_p^{-1}\hfeps(p)\} \to u
\qquad\text{in $L^2(V)$},
\]
and therefore also globally in $L^2(\M)$.
\end{proof}

It remains to verify whether Definition~\ref{def:limit} defines a limit uniquely, which is not obvious as it hinges on the choice of two auxiliary sequences, $\hfeps$ and $\Feps$. As we will see, the choice of isometries $\Feps$ may affect limit up to an immaterial gradient of a Killing field. To show this, we first need a lemma regarding converging isometries:

\begin{lemma}
\label{lem:isometries}
Let $\Feps\in\Iso(\S)$ satisfy
\beq
\|\kappa\circ\Feps - \kappa\|_{L^2} \lesssim \e.
\label{eq:iso_L2}
\eeq
Then, for every $k\in\bbN$,
\[
\|\kappa\circ\Feps - \kappa\|_{C^k} \lesssim \e,
\] 
where the constants depend on the constant in \eqref{eq:iso_L2} as well as on $k$.
Moreover, there exists a subsequence such that
\[
\frac{\Pi_{\Feps}^\iota\circ d\Feps - \id_{T\M}}{\e} \to \nabs u
\qquad
\text{in $C^k$}
\]
for some $u\in\iso(\S)$.
Here, the domain over which the norms are defined can be the whole $\S$ or any smooth subdomain $\M\subset \S$.
\end{lemma}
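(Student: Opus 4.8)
The strategy is to exploit the compactness of the isometry group $\Iso(\S)$, which is a finite-dimensional compact Lie group acting smoothly on $\S$. The key point is that on a compact Lie group, all norms on the Lie algebra are equivalent, and the action map is smooth; hence an $L^2$-smallness of $\kappa\circ\Feps - \kappa$ must be reflected in a $C^k$-smallness coming from the smoothness of the group action.

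First I would reduce to a neighborhood of the identity in $\Iso(\S)$. The hypothesis \eqref{eq:iso_L2} together with the equivalence of intrinsic and extrinsic distances forces $d_\s(\Feps(p),p) \to 0$ in $L^2$, and since the $\Feps$ are isometries (hence uniformly Lipschitz as maps $\S\to\S$), a bootstrap/interpolation argument upgrades this to uniform convergence $\Feps \to \id$; more carefully, one uses that a family of isometries with $L^2$-small displacement cannot have large displacement anywhere, because an isometry moving one point by $\delta$ moves nearby points by comparable amounts, so $\|\kappa\circ\Feps-\kappa\|_{L^\infty}\lesssim \|\kappa\circ\Feps-\kappa\|_{L^2}^{1/(d+1)}$ or similar, and then one iterates. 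In particular, for $\e$ small, $\Feps$ lies in a fixed coordinate chart of $\Iso(\S)$ around $\id$, so we may write $\Feps = \exp_{\Iso}(t_\e v_\e)$ with $v_\e$ in the unit sphere of $\iso(\S)$ and $t_\e \to 0$; equivalently $\Feps = \Fl^{w_\e}_1$, the time-one flow of a Killing field $w_\e = t_\e v_\e$ whose $C^k$-norm is $\simeq t_\e$ (all norms on the finite-dimensional space of Killing fields being equivalent).

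Next, I would identify $t_\e \simeq \e$. Since the map $\iso(\S)\ni w \mapsto \kappa\circ \Fl^w_1 - \kappa \in L^2(\M;\R^D)$ is smooth and has derivative at $w=0$ equal to $w \mapsto d\kappa(w)$, which is injective on the finite-dimensional space $\iso(\S)$ (as $\kappa$ is an embedding and a nonzero Killing field is a nonzero section), we get $\|\kappa\circ\Fl^{w_\e}_1 - \kappa\|_{L^2} \simeq \|w_\e\|_{L^2(\M)} \simeq t_\e$; combined with \eqref{eq:iso_L2} this yields $t_\e \lesssim \e$. Then equivalence of norms on $\iso(\S)$ gives $\|\kappa\circ\Feps-\kappa\|_{C^k} = \|\kappa\circ\Fl^{w_\e}_1 - \kappa\|_{C^k}\lesssim \|w_\e\|_{C^k(\M)} \simeq t_\e \lesssim \e$, which is the first claim. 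Note one subtlety: if the restriction-to-$\M$ map $\iso(\S)\to \Gamma(T\M)$ has a kernel (a Killing field of $\S$ vanishing on all of $\M$, impossible for $\M$ open by unique continuation, but worth a remark), one argues on the quotient; since $\M$ is open this does not arise.

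Finally, for the convergence of the rescaled covariant derivatives, pass to a subsequence so that $v_\e \to v$ in $\iso(\S)$ (unit sphere is compact) and $t_\e/\e \to c \ge 0$; set $u = c\, v \in \iso(\S)$. It remains to show
\[
\frac{\Pi_{\Feps}^\iota\circ d\Feps - \id_{T\M}}{\e} \to \nabs u \qquad\text{in } C^k.
\]
This is a finite-dimensional computation: the map $w \mapsto \Pi^\iota_{\Fl^w_1}\circ d\Fl^w_1 - \id_{T\M} \in C^k\W^1(\M;T\M)$ (here parallel transport along the short geodesic from $\Fl^w_1(p)$ to $p$, well-defined for $w$ small by the uniform convergence to $\id$) is smooth near $w=0$, vanishes at $w=0$, and I claim its derivative in direction $w$ is exactly $\nabs w$. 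Indeed, differentiating $\Fl^{tw}_1$ at $t=0$ gives the displacement field $w$ itself (to leading order $\Fl^w_1(p) = \exp_p(w(p)) + o(\|w\|)$), and differentiating $d\Fl^{tw}_1$ and the parallel-transport correction and combining them is precisely the standard computation that the linearization of a near-identity diffeomorphism's pulled-back differential is the covariant derivative of the generating vector field — the parallel transport is what converts the coordinate derivative into the covariant one, exactly as in the proof of Proposition~\ref{prop:J_is_a_gradient}. Since $w_\e = t_\e v_\e$ and $t_\e/\e \to c$, the $C^k$-smoothness of this map gives $\e^{-1}(\Pi^\iota_{\Feps} d\Feps - \id) \to \nabs(c v) = \nabs u$ in $C^k$.

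**Main obstacle.** The delicate part is the very first step: upgrading $L^2$-smallness of the displacement to uniform (and then $C^k$) smallness \emph{before} we know $\Feps$ lies in a coordinate chart of the Lie group. One cannot simply invoke smoothness of the $\Iso(\S)$-action because a priori $\Feps$ could be a "large" isometry that happens to have small average displacement. The resolution uses that an isometry is determined by a frame at a point (so the displacement cannot be small on a large set without being small everywhere, quantitatively), or alternatively a compactness argument: any subsequential limit of $\Feps$ in the (compact) group $\Iso(\S)$ must, by lower semicontinuity, have zero $L^2$-displacement, hence be $\id$; so $\Feps \to \id$ in $\Iso(\S)$, placing it eventually in any chosen chart, after which everything is the smooth finite-dimensional calculus described above.
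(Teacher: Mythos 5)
Your proposal is correct and reaches the same conclusions, but the route to the first estimate is genuinely different from the paper's. The paper proves the $C^0$ bound $\|\kappa\circ\Feps - \kappa\|_{C^0} \lesssim \e^\alpha$ with $\alpha = 2/(2+d)$ by a concrete pigeonhole argument over balls of radius $\e^\alpha$, then writes $\Feps = \Fl_1(u_\e)$ and iterates a bootstrap (using $d_\s(\Feps(p),\exp_p(u_\e(p)))\lesssim\|u_\e\|_{C^0}^2$ and equivalence of norms on $\iso(\S)$) finitely many times to improve the exponent to $1$. You instead first invoke compactness of $\Iso(\S)$ and lower semicontinuity to get $\Feps\to\iota$ in the group (hence eventually in a chart), and then replace the entire iterated bootstrap with a single quantitative inverse-function-theorem argument: the map $\iso(\S)\ni w\mapsto \kappa\circ\Fl^w_1 - \kappa \in L^2$ has injective derivative $d\kappa$ at $0$ (injective on $\iso(\S)$ by unique continuation of Killing fields, as you correctly flag), so $\|\kappa\circ\Feps - \kappa\|_{L^2}\simeq \|w_\e\|$ near the origin, which gives $\|w_\e\|\lesssim\e$ in one step. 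This is cleaner and avoids the $\alpha$-bookkeeping; the paper's version is more elementary and self-contained in that it does not appeal to the inverse function theorem on the group. For the second statement your argument is essentially the paper's: both rest on the observation that the flow and exponential map agree to first order, that the differential plus parallel-transport correction linearizes to $\nabs w$ (the content of Lemma~\ref{lem:DOC92}, which is really the reference you want rather than Proposition~\ref{prop:J_is_a_gradient}), and that the finite-dimensionality of $\iso(\S)$ upgrades the convergence to $C^k$; you package this more explicitly as smoothness of a finite-dimensional map, which is a tidy way to state the same computation.
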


\begin{proof}
We start by showing that
\beq
\|\kappa\circ\Feps - \kappa\|_{C^0} \lesssim \e^\alpha,
\label{eq:bootstrap}
\eeq
for $\alpha = 2/(2+d)\in (0,1)$. Let $r_\e = \e^\alpha$, and note that for $q\in\S$
\[
\Vol_\s(B_{r_\e}(q)) \simeq r_\e^d,
\]
where, since the manifold is compact, the constants are independent of $q$. For every $q\in\S$ there exists a $q_\e$ satisfying
\[
d_\s(q_\e,q) \lesssim r_\e
\Textand
|\kappa\circ\Feps(q_\e) - \kappa(q_\e)| \lesssim \frac{\e}{\Vol_\s^{1/2}(B_{r_\e}(q))}, 
\]
for otherwise \eqref{eq:iso_L2} would be violated. By the triangle inequality, the fact that $\Feps$ are isometries, and the equivalence of extrinsic and intrinsic distances,
\[
\begin{split}
|\kappa\circ\Feps(q) - \kappa(q)| 
&\le |\kappa\circ\Feps(q) - \kappa\circ\Feps(q_\e)|  + |\kappa\circ\Feps(q_\e) - \kappa(q_\e)|  + |\kappa(q_\e) - \kappa(q)| \\
&\lesssim  r_\e + \frac{\e}{\Vol_\s^{1/2}(B_{r_\e}(q))} \simeq r_\e,
\end{split}
\]
and since this holds for every $q\in\S$, we obtain \eqref{eq:bootstrap}.

We proceed with a bootstrap to show that 
\beq
\|\kappa\circ\Feps - \kappa\|_{C^0} \lesssim \e^{\min(1,2\alpha)}.
\label{eq:bootstrap2}
\eeq
To this end, we write $\Feps$ in the form
\[
\Feps = \Fl_1(u_\e),
\]
where $u_\e\in\iso(\S)$ are Killing fields and $\Fl_1$ is the flow map up to time $1$. 
Since $\Feps$ converges to the identity map, $u_\e\to 0$, which holds in every norm, since $\iso(\S)$ is a finite-dimensional vector space.
Since $d\Fl_1(u_\e) = u_\e = d\exp(u_\e)$, the flow map and the exponential map identify up to linear order in the vector field, and hence we have, for every $p\in\S$,
\[
d_\s(\Feps(p),\exp_p(u_\e(p))) \lesssim  \|u_\e\|^2_{C^0}.
\]
Hence by the triangle inequality,
\[
|u_\e(p)| = d_\s(\exp_p(u_\e(p)), p) \lesssim d_\s(\Feps(p),p) + \|u_\e\|^2_{C^0}.
\]
Using the fact that all norms on $\iso(\S)$ are equivalent, we obtain that 
\[
\|u_\e\|_{C^0} \lesssim \|u_\e\|_{L^2} \lesssim \e^{\min(1,2\alpha)},
\]
and in turn that
\[
\|d_\s(\Feps(p),p) \|_{C^0} \lesssim  \|u_\e\|_{C^0} \lesssim \e^{\min(1,2\alpha)},
\]
which implies \eqref{eq:bootstrap2}.

Repeating this last step finitely-many times, we obtain that
\[
\|u_\e\|_{L^2} \lesssim \e.
\]
Using again the fact that $\iso(\S)$ is finite-dimensional, we have for every $k\in\bbN$, 
\[
\|u_\e\|_{C^k} \lesssim \e,
\]
where the constant depends on $k$. By the stability of solutions of ordinary differential equations with respect to perturbations in the flow fields, it follows that in every coordinate patch,
\[
\|\Feps - \iota\|_{C^k} \lesssim \e,
\]
and thus that globally,
\[
\|\kappa\circ\Feps - \kappa\|_{C^k} \lesssim \e.
\] 

As the for the second statement, we note that the $C^1$ convergence above implies that $\Pi_{\Feps}^\iota\circ d\Feps - \id_{T\M} = O(\e)$ pointwise, and hence there exists a subsequence of $\Feps$ that converges in the sense of Definition~\ref{def:limit}.
By repeating the analysis of Proposition~\ref{prop:J_is_a_gradient} in a coordinate patch, we obtain that 
\[
\frac{\Pi_{\Feps}^\iota\circ d\Feps - \id_{T\M}}{\e} \to \nabs u
\qquad
\text{in $C^k$}
\]
where $u$ is the limit of the vector fields $u_\e$, and hence a Killing field.
\end{proof}

\begin{proposition}[Uniqueness of limit]
\label{prop:limit_unique}
Suppose that both $\feps\to (\Psi,\nabs u)$ and $\feps\to (\Psi',\nabs u')$ in the sense of Definition~\ref{def:limit}. Then 
$\Psi=\Psi'$ and $u - u'$ belongs to the (possibly trivial) Lie algebra $\iso(\S)$ of Killing fields.
\end{proposition}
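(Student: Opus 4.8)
The plan is to compare the two convergence data and show they must agree modulo the freedom allowed by Proposition~\ref{prop:J_is_a_gradient} and Lemma~\ref{lem:isometries}. Suppose $\feps\to(\Psi,\nabs u)$ via an $\e$-truncation $\hfeps$ and isometries $\Feps\to\Psi$, and $\feps\to(\Psi',\nabs u')$ via another $\e$-truncation $\hf_\e'$ and isometries $\Feps'\to\Psi'$. First I would observe that both truncations converge uniformly to their respective limits, and since $\Vol_\s(\{\hfeps\ne\feps\})\lesssim\e^2$ and $\Vol_\s(\{\hf_\e'\ne\feps\})\lesssim\e^2$, the two truncations agree with $\feps$ (hence with each other) off a set of volume $O(\e^2)$; combined with their uniform bounds in $W^{1,\infty}$ this gives $\|\kappa\circ\hfeps-\kappa\circ\hf_\e'\|_{L^2}\lesssim\e$ (and in fact $\|\kappa\circ\hfeps-\kappa\circ\hf_\e'\|_{W^{1,2}}\lesssim\e$ is not needed, only the $L^2$ statement). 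Then by the triangle inequality $\|\kappa\circ\Feps-\kappa\circ\Feps'\|_{L^2}\lesssim\e$, i.e. $\|\kappa\circ(\Feps'^{-1}\circ\Feps)-\kappa\|_{L^2}\lesssim\e$ since $\Feps'$ is an isometry. Passing to the limit gives $\Psi'^{-1}\circ\Psi=\id$, i.e. $\Psi=\Psi'$, proving the first claim; and it shows $\Feps'^{-1}\circ\Feps$ satisfies the hypothesis \eqref{eq:iso_L2} of Lemma~\ref{lem:isometries}.

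Next I would exploit the explicit formula \eqref{eq:explicit_u}. Since $\hfeps$ and $\hf_\e'$ agree with $\feps$ off an $O(\e^2)$-volume set and are uniformly Lipschitz, one checks that replacing $\hfeps$ by $\feps$ in \eqref{eq:explicit_u} changes the $L^2$-limit by nothing (the difference is supported on a shrinking set and is uniformly bounded after truncation), so
\[
u(p)=\LimEps\frac1\e\,d\Feps^{-1}\exp^{-1}_{\Feps(p)}\feps(p),\qquad
u'(p)=\LimEps\frac1\e\,d\Feps'^{-1}\exp^{-1}_{\Feps'(p)}\feps(p),
\]
both limits in $L^2(\M)$. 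It therefore suffices to analyze how the right-hand side transforms under changing $\Feps$ to $\Feps'$. Writing $\Phi_\e=\Feps'^{-1}\circ\Feps$, which by Lemma~\ref{lem:isometries} satisfies $\|\kappa\circ\Phi_\e-\kappa\|_{C^k}\lesssim\e$ for all $k$ and, along a subsequence, $\e^{-1}(\Pi_{\Phi_\e}^\iota\circ d\Phi_\e-\id_{T\M})\to\nabs w$ in $C^k$ for some Killing field $w\in\iso(\S)$, I would compute $\frac1\e d\Feps'^{-1}\exp^{-1}_{\Feps'(p)}\feps(p)$ by inserting $\Feps=\Feps'\circ\Phi_\e$: since $\Feps'$ is an isometry it commutes with $\exp$ and parallel transport, so $\exp^{-1}_{\Feps'(p)}\feps(p)=d\Feps'\,\exp^{-1}_{\Feps'(\Phi_\e^{-1}(\cdot))}(\dots)$ — more carefully, I would use $\exp^{-1}_q(x)$ being isometry-equivariant to reduce to comparing $\exp^{-1}_{\Feps(p)}\feps(p)$ and $\exp^{-1}_{\Feps'(p)}\feps(p)$, two vectors at nearby basepoints $\Feps(p),\Feps'(p)$ within $O(\e)$ of each other. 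A first-order Taylor expansion of the map $q\mapsto\exp^{-1}_q(x)$ in the basepoint, together with the fact that $\feps(p)$ is itself within $O(\e)$ of both basepoints (in $L^2$, using property (b) of Definition~\ref{def:limit}), yields to leading order $u'(p)-d\Phi_\e(u(p))\to -w(p)$ pointwise-a.e.-then-$L^2$, i.e. $u'=u-w\bmod o(1)$; since $d\Phi_\e\to\id$ uniformly and $u_\e,u'_\e$ are bounded in $W^{1,2}$, passing to the limit gives $u'=u-w$, hence $u-u'=w\in\iso(\S)$.

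The main obstacle I anticipate is the second step: rigorously tracking the first-order change of $\exp^{-1}_q(x)$ as the basepoint $q$ varies, while $x=\feps(p)$ is only an $L^2$-object lying within $O(\e)$ of $q$ (not $L^\infty$), so that the error terms quadratic in the basepoint displacement or in $|x-q|$ are only controlled after the Lipschitz truncation and only in $L^1$. I would handle this exactly as in the proof of Proposition~\ref{prop:J_is_a_gradient}: work in a fixed coordinate patch, replace $\feps$ by its truncation $\hfeps$ (legitimate off an $O(\e^2)$ set), use the uniform bilipschitz bounds on the family $\exp_p$ and uniform $C^2$-bounds on the transition between the two isometry-adapted frames, and use that the "bad'' terms are products of a sequence tending to $0$ uniformly (namely $\e|U_\e|$ or $\|\kappa\circ\Phi_\e-\kappa\|_{C^1}$) with a sequence bounded in $L^2$. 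A secondary technical point is that Lemma~\ref{lem:isometries} only gives the convergence $\e^{-1}(\Pi_{\Phi_\e}^\iota d\Phi_\e-\id)\to\nabs w$ along a subsequence; but uniqueness of the limit $u-u'$ (which we are in the process of establishing from two fixed convergence data) forces all subsequential limits $w$ to coincide, so this causes no real trouble — alternatively one notes that $\iso(\S)$ being finite-dimensional, the vector fields $(\Feps'^{-1}\circ\Feps)$-generators $w_\e$ converge in every norm once they converge in $L^2$, and $L^2$-convergence follows from $u_\e-u'_\e\to u-u'$ together with the pointwise identity derived above.
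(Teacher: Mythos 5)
Your first step (showing $\Psi=\Psi'$ via the triangle inequality and the $\e$-truncation estimates) is the same as the paper's. For the second step you take a genuinely different route: the paper never invokes the explicit formula \eqref{eq:explicit_u}; instead it directly subtracts the two defining weak limits from Definition~\ref{def:limit}(c) and decomposes the difference of the transported gradients into three terms --- a term controlled by Lemma~\ref{lem:isometries} (converging to the gradient of a Killing field), a holonomy term controlled exactly as in \eqref{eq:parallel_trans_estimates}, and a truncation-discrepancy term which is bounded in $L^2$ and tends to zero in measure, hence weakly in $L^2$. You instead work at the level of the displacements themselves via \eqref{eq:explicit_u}, which buys conceptual simplicity (one compares two $L^2$-vector fields rather than two $L^2$-tensor fields and their weak limits), but costs you the nontrivial Taylor expansion of $q\mapsto\exp^{-1}_q(x)$ in the basepoint that the paper's derivative-level decomposition avoids entirely. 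Both routes rest on the same two pillars: Lemma~\ref{lem:isometries} to identify the correction as a Killing field, and a ``truncations agree off an $O(\e^2)$-set'' argument.

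Two points need tightening. First, your justification that replacing one truncation by the other does not change the $L^2$-limit --- ``the difference is supported on a shrinking set and is uniformly bounded after truncation'' --- is not enough: on the bad set each term of $\frac1\e d\Feps^{-1}\exp^{-1}_{\Feps(p)}\hfeps(p)$ is only $O(1/\e)$ pointwise, so integrating the square over a set of measure $O(\e^2)$ gives $O(1)$, not $o(1)$. The correct argument is the one the paper uses for its third term: the difference is bounded in $L^2$ and converges to zero in measure (since the truncations coincide off increasing sets), hence converges to zero weakly; since $u$ and $u'$ exist as strong $L^2$-limits and the Killing piece converges strongly, the strong limit of the remainder is zero too. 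Second, the expression $u'(p)-d\Phi_\e(u(p))$ is formally ill-typed ($d\Phi_\e$ maps $T_p\M$ to $T_{\Phi_\e(p)}\M$); you need to insert a parallel transport to make the comparison, as you do elsewhere, and then the error from $\Pi\circ d\Phi_\e$ versus $\id_{T\M}$ is $O(\e)$ uniformly by Lemma~\ref{lem:isometries}, so it drops out. With those repairs the argument goes through.
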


\begin{proof}
Assume that $(\hfeps,\Feps)$ and $(\vfeps,\vFeps)$ are the associated truncations and isometries yielding the limits $(\Psi,\nabs u)$ and $(\Psi',\nabla u')$, respectively.
From the definition of the $\e$-truncation and the convergence, 
\[
\begin{split}
\|\kappa \circ \Feps -\kappa \circ \vFeps\|_{L^2} &\le \|\kappa \circ \Feps -\kappa \circ \hfeps\|_{L^2} + \|\kappa \circ \vFeps -\kappa \circ \vfeps\|_{L^2} \\
&+ \|\kappa \circ \feps -\kappa \circ \hfeps\|_{L^2} + \|\kappa \circ \feps -\kappa \circ \vfeps\|_{L^2} \lesssim \e,
\end{split}
\]
hence $\Psi = \Psi'$, and $\|\kappa\circ (\Feps^{-1} \circ\vFeps) - \kappa\|_{L^2}\lesssim \e$.

By definition,
\beq\label{eq:uniqueness_aux1}
\frac{d\Feps^{-1}\circ \Pi_{\hfeps}^{\Feps} \circ d\hfeps - d{\vFeps}^{-1}\circ \Pi_{\vfeps}^{\vFeps} \circ d\vfeps}{\e} \weakly \nabs(u - u')
\qquad\text{in $L^2\W^1(\M;T\M)$}.
\eeq
We rewrite the left-hand side, using \eqref{eq:Iso_and_par_trans}, as follows,
\[
\begin{aligned}
&
\frac{\id_{T\M} -
\Pi_{\vFeps^{-1}\circ\Feps}^{\iota}\circ d(\vFeps^{-1}\circ  \Feps)}{\e} \circ d\Feps^{-1} \circ \Pi_{\hfeps}^{\Feps} \circ d\hfeps \\ 
&\quad+ d\vFeps^{-1}\circ \frac{\Pi_{\Feps}^{\vFeps} \circ \Pi_{\hfeps}^{\Feps} -
\Pi_{\hfeps}^{\vFeps}}{\e} \circ d\hfeps
+
d\vFeps^{-1}\circ\frac{\Pi_{\hfeps}^{\vFeps} \circ d\hfeps -
\Pi_{\vfeps}^{\vFeps} \circ d\vfeps}{\e}
\end{aligned}
\]
The first term in the product of a sequence converging (by Lemma~\ref{lem:isometries}, in any $C^k$ norm) to the gradient of a Killing field and a sequence converging strongly in $L^2$ to the identity, hence converges in $L^2$ to the gradient of a Killing field (the  compositions in the equation above are compositions of linear operators, so for the sake of convergence, or, equivalently, in coordinates, they behave like products). 
The second term is a product of bounded sequences and (by the same argument as in the proof of Proposition~\ref{eq:parallel_trans_estimates}) a holonomy term that tends to zero in $L^2$. 
Finally, the third term is bounded in $L^2$, by \eqref{eq:uniqueness_aux1} and the boundedness of the first two terms. Furthermore, it converges to zero in measure, since $\hfeps$ and $\vfeps$ identify over increasing sets; hence it converges to zero weakly in $L^2$. 
This completes the proof. 
\end{proof}

\section{$\Gamma$-convergence}

\begin{theorem}[$\Gamma$-convergence]\label{thm:Gamma_conv}
The sequence of energies $E_\e :W^{1,2}(\M;\S) \to \R$, defined in \eqref{eq:Eeps}, $\Gamma$-converges, under the notion of convergence $\feps\to (\Psi,\nabs u)$ of Definition~\ref{def:limit}, to the limit energy functional, $E_0 : W^{1,2}\mathfrak{X}(\M)\to\R$ given by
\[
E_0(u) =  \frac{1}{4} \int_\M |\mathcal{L}_u\s  - \h|^2\,  \VolS,
\]
where $\mathcal{L}_u\s$ is given by \eqref{eq:calL},
and the norm in the integrand is the norm on $T^*\M\otimes T^*\M$ induced by $\s$.
\end{theorem}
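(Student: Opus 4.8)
The plan is to establish the two standard ingredients of $\Gamma$-convergence separately: the $\liminf$ inequality (lower bound) and the existence of a recovery sequence (upper bound), both with respect to the notion of convergence $\feps \to (\Psi, \nabla^\s u)$ from Definition~\ref{def:limit}.

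For the $\liminf$ inequality, I would start with a sequence $\feps \to (\Psi, \nabla^\s u)$ and assume $\liminf_\e E_\e(\feps) < \infty$ (otherwise nothing to prove), so that $E_\e(\feps) \lesssim 1$ along a subsequence. Then an $\e$-truncation $\hfeps$ exists by Proposition (the one after Definition~\ref{def:truncation}); since $\kappa\circ\hfeps$ differs from $\kappa\circ\feps$ by $O(\e)$ in $W^{1,2}$ and the two agree outside a set of $\s$-volume $O(\e^2)$, one checks that replacing $\feps$ by $\hfeps$ changes $E_\e$ by a negligible amount — this uses that $\hfeps$ is uniformly Lipschitz so the integrand is uniformly bounded on the bad set of small measure, plus Lemma~\ref{lem:almost_the_same} to pass between $\SO(\geps,\feps^*\s)$ and $\SO(\s,\feps^*\s)$. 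Next, conjugating by the isometries $\Feps$ via \eqref{eq:Iso_and_par_trans} and using that $\Feps$ are isometries (so $E_\e$ is invariant), I reduce to $\Feps = \iota$ and work in coordinate patches. There the key object is $A_\e := \dfrac{\Pi_{\hfeps}^\iota\circ d\hfeps - \id_{T\M}}{\e}$, which converges weakly in $L^2$ to $\nabla^\s u$ by Definition~\ref{def:limit}(c) and Proposition~\ref{prop:J_is_a_gradient}. I would show that $\dfrac{1}{\e^2}\dist^2(d\hfeps, \SO(\geps,\hfeps^*\s))$ equals, up to lower-order terms, $\dist^2\!\bigl(\id + \e A_\e, \SO(\geps,\s)\bigr)/\e^2$ after transporting everything to a fixed fiber; using the expansion of the distance to $\SO$ near the identity (the squared distance of $\id + \e B$ to $\SO$ is $\e^2|\sym B|^2 + o(\e^2)$ for bounded $B$), together with the fact that $Q_\e \to \id$ with $(Q_\e - \id)/\e \to \xi$, $\h = 2\sym\xi^\flat$ from Lemma~\ref{lem:Qeps}, the integrand becomes $|\sym A_\e - \sym\xi^\flat|^2 + o(1) = \tfrac14|\mathcal{L}_u\s - \h|^2 + o(1)$ in the limit. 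Weak $L^2$ lower semicontinuity of the convex functional $B \mapsto \int|\sym B - \sym\xi^\flat|^2$ then gives the bound; some care is needed because the pointwise bound only holds where $d\hfeps$ is close to $\SO$, so I would split $\M$ into a "good set" where $|d\hfeps - \SO| \lesssim \e^{1/2}$, say, and a "bad set" whose measure is $o(1)$ and on which the energy density is nonnegative, hence discardable for a lower bound.

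For the recovery sequence, given $u \in W^{1,2}\mathfrak{X}(\M)$ I would first treat smooth compactly supported $u$ and then pass to the general case by density and a diagonal argument (using that $E_0$ is continuous in the $W^{1,2}$ topology of $u$). For smooth $u$, the natural candidate is $\feps(p) = \exp_p(\e\, u(p))$; this is smooth, uniformly Lipschitz for small $\e$, so it is its own $\e$-truncation, and taking $\Feps = \iota$ one verifies directly from the definition of $\exp$ that parts (a),(b) of Definition~\ref{def:limit} hold and that $A_\e = (\Pi_{\feps}^\iota \circ d\feps - \id)/\e \to \nabla^\s u$, uniformly, so $\feps \to (\id, \nabla^\s u)$. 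Then the energy computation is the same Taylor expansion as in the lower bound, now run in the reverse direction and with genuine convergence (not merely $\liminf$) because $A_\e \to \nabla^\s u$ strongly: $E_\e(\feps) \to \tfrac14\int_\M|\mathcal{L}_u\s - \h|^2\,\VolS = E_0(u)$.

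The main obstacle I expect is the lower bound, specifically making rigorous the claim that the pointwise energy density $\tfrac{1}{\e^2}\dist^2(d\hfeps, \SO(\geps, \hfeps^*\s))$ reduces, modulo a term that is $o(1)$ in $L^1$, to the convex integrand $|\sym A_\e - \sym\xi^\flat|^2$. Two issues combine here: the fibers $T_{\hfeps(p)}\S$ vary, so one must transport $d\hfeps$ to the fixed fiber $T_p\S$ (the same parallel-transport device as in Definition~\ref{def:limit}) and control the error this introduces in the distance to $\SO$; and the expansion of $\dist(\cdot, \SO)$ near the identity is only valid in a neighborhood, so on the set where $|d\hfeps - \SO|$ is not small one cannot use it. The resolution is the truncation-and-good-set splitting described above: the truncation gives a uniform $L^\infty$ bound on $d\hfeps$ and hence control of the transported differential; the good/bad set decomposition confines the region where the quadratic expansion fails to a set of vanishing measure on which the density has a favorable sign. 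Passing between the metrics $\geps$ and $\s$ everywhere (in $\dist$, in $\SO$, and in the volume form) is handled uniformly by Lemma~\ref{lem:Qeps} and Lemma~\ref{lem:almost_the_same}, which is exactly what those lemmas were prepared for.
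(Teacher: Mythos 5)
Your proposal follows essentially the same architecture as the paper's proof: split into a $\liminf$ inequality (Proposition~\ref{prop:lsc}) and a recovery sequence (Proposition~\ref{prop:recovery}), reduce to a fixed fiber via $\e$-truncation and parallel transport, Taylor-expand the distance to $\SO$ near an isometry (Lemma~\ref{lem:SO}), invoke weak $L^2$ lower semicontinuity of a convex quadratic, and recover with $\feps(p)=\exp_{\Psi(p)}(\e\, d\Psi\, u(p))$ (Lemma~\ref{lem:DOC92}) followed by a density/diagonal argument. However, one step in your $\liminf$ argument as stated would not go through. You claim that ``replacing $\feps$ by $\hfeps$ changes $E_\e$ by a negligible amount'' on the strength of the uniform Lipschitz bound on $\hfeps$ and the $O(\e^2)$ measure of the exceptional set. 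That is false in general: on $\{\feps\ne\hfeps\}$ the integrand of the rescaled energy is pointwise $\lesssim\e^{-2}$ (since $\hfeps$ is uniformly Lipschitz), and the set has volume $\lesssim\e^2$, so its contribution to $E_\e(\hfeps)$ is $O(1)$, not $o(1)$. The correct treatment, as in the paper, is one-sided: multiply the integrand by the indicator $\eta_\e$ of $\{\feps=\hfeps\}$, drop the complement from $E_\e(\feps)$ by nonnegativity, and only then substitute $d\hfeps$ for $d\feps$ on $\{\feps=\hfeps\}$. You already describe this one-sided drop correctly for your second good/bad split, so the repair is to use the same logic here instead of a symmetric replacement.

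A secondary imprecision worth fixing: you expand $\dist^2(\cdot,\SO(\geps,\s))$ ``near the identity'', but $\id_{T\M}\notin\SO(\geps,\s)$ unless $\geps=\s$. The paper writes the transported differential as $\Qeps + \e\Jeps$ with $\Qeps\in\SO(\geps,\s)$ from Lemma~\ref{lem:Qeps} and $\Jeps = A_\e + (\id-\Qeps)/\e\weakly \nabs u-\xi$, then expands around the genuine basepoint $\Qeps$ via Lemma~\ref{lem:SO} to get $|\sym\Jeps\Qeps^T|^2$, whose weak limit is $|\sym(\nabs u-\xi)|^2=\tfrac14|\mathcal{L}_u\s-\h|^2$. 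Your version (expand about $\id$ and then absorb $(\Qeps-\id)/\e\to\xi$) points at the same computation, but the cross terms only come out right if you expand about a point actually lying in $\SO(\geps,\s)$; this is bookkeeping rather than a new idea, yet it is exactly the bookkeeping Lemma~\ref{lem:SO} is set up to do.
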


The proof of Theorem~\ref{thm:Gamma_conv} will follow from Proposition~\ref{prop:lsc} (lower-semicontinuity) and Proposition~\ref{prop:recovery} (recovery sequence) below.

We will need the following lemma, which will be used when expanding the integrand in $\Eeps$ for $d\feps$ close to an isometry:

\begin{lemma}
\label{lem:SO}
Let $(V_1,\g_1)$ and $(V_2,\g_2)$ be oriented inner-product spaces of the same dimension and let $W:\Hom(V_1,V_2)\to\R$ be given by
\[
W(A) = \dist^2(A,\SO(\g_1,\g_2)).
\]
For $Q\in \SO(\g_1,\g_2) \subset \Hom(V_1,V_2)$,
\[
DW_Q(B) = 0
\Textand
D^2W_Q(B,B) = 2\, |\sym BQ^T|^2.
\]
Here $Q^T\in\SO(\g_2,\g_1) \subset \Hom(V_2,V_1)$ is the adjoint of $Q$, 
i.e., for $u\in V_2$ and $v\in V_1$,
\[
(Q^T(u),v)_{\g_1} = (u,Q(v))_{\g_2},
\]
and hence $BQ^T\in \End(V_2)$, which can be symmetrized with respect to $\g_2$, namely, $2\,\sym BQ^T = BQ^T + QB^T$
(see the Notations section in the Introduction for the definitions of symmetrization).
\end{lemma}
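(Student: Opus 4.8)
The statement is a second-order Taylor expansion of the squared distance function $W(A) = \dist^2(A, \SO(\g_1,\g_2))$ around a point $Q$ that lies in the energy well $\SO(\g_1,\g_2)$. The natural strategy is to first reduce to the Euclidean case by an isometric change of variables, then differentiate twice. Concretely, I would pick $\g_1$-orthonormal and $\g_2$-orthonormal bases so that $\Hom(V_1,V_2)$ is identified with $d\times d$ matrices, the inner product becomes the Frobenius inner product, and $\SO(\g_1,\g_2)$ becomes $\SO(d)$. Under this identification $W(A) = \dist^2(A,\SO(d))$ with the Frobenius distance, and $Q$ corresponds to some $R\in\SO(d)$. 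The claim to prove becomes: $DW_R(B)=0$ and $D^2W_R(B,B) = 2|\sym(BR^T)|^2$, where $\sym$ is the Frobenius symmetrization.

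**Key steps.** First I would reduce further by right-multiplication: since $\dist(A,\SO(d)) = \dist(AR^T, \SO(d))$ (because $\SO(d)$ is invariant under right multiplication by $R^{-1}=R^T$ and Frobenius distance is invariant under orthogonal multiplication), we have $W(A) = \tilde W(AR^T)$ where $\tilde W(C) = \dist^2(C,\SO(d))$, and it suffices to expand $\tilde W$ around the identity $\id$. So I must show $D\tilde W_{\id} = 0$ and $D^2\tilde W_{\id}(E,E) = 2|\sym E|^2$ for a perturbation $E = BR^T$; the chain rule then gives the stated formula since the differential of $A\mapsto AR^T$ sends $B$ to $BR^T$. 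Now for $\tilde W$ near the identity: for $C$ close to $\id$, polar decomposition gives $C = \O\,\S$ with $\O\in\SO(d)$ and $\S$ symmetric positive-definite near $\id$, and the nearest point of $\SO(d)$ to $C$ is exactly $\O$, so $\tilde W(C) = |C - \O|^2 = |\S - \id|^2$. Writing $C = \id + E$ and expanding the polar decomposition: $\S = (C^TC)^{1/2} = (\id + E + E^T + E^TE)^{1/2} = \id + \sym E + O(|E|^2)$, so $\S - \id = \sym E + O(|E|^2)$ and hence $\tilde W(\id+E) = |\sym E|^2 + O(|E|^3)$. Reading off the Taylor coefficients: the linear term vanishes, and the quadratic form is $D^2\tilde W_{\id}(E,E) = 2|\sym E|^2$ (the factor $2$ from the standard convention $f(x_0+v) = f(x_0) + Df(v) + \tfrac12 D^2f(v,v) + \cdots$).

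**Alternative, more self-contained route.** If I want to avoid invoking that the polar factor is the nearest rotation (which itself needs a small argument), I would instead compute directly: for $Q\in\SO(\g_1,\g_2)$ and a curve $Q_t = Q\exp(tS)$ with $S$ skew (with respect to $\g_1$), every such curve stays in $\SO(\g_1,\g_2)$, and one checks that $W(A) = |A|^2 - 2\max_{Q'\in\SO}\ip{A,Q'} + d$, so the expansion of $W$ reduces to expanding $g(A) := \max_{Q'}\ip{A,Q'}$. Near $Q$, the maximizer is $Q' = Q$ itself (for $A=Q$), and by the envelope theorem $Dg_Q(B) = \ip{B,Q}$; differentiating the optimality condition once more gives the Hessian of $g$ in terms of the curvature of $\SO$, namely $D^2g_Q(B,B) = -|\text{(skew part of }BQ^T)|^2$ after identifying the tangent/normal decomposition. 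Then $D^2W_Q(B,B) = 2|B|^2 - 2D^2g_Q(B,B)\cdot(\text{signs})$ collapses to $2|\sym BQ^T|^2$ once one uses $|B|^2 = |BQ^T|^2 = |\sym BQ^T|^2 + |\text{skew }BQ^T|^2$.

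**Main obstacle.** The only genuinely delicate point is justifying that, for $A$ in a neighborhood of $Q\in\SO(\g_1,\g_2)$, the nearest point of $\SO(\g_1,\g_2)$ to $A$ is smooth in $A$ and is given by the polar (rotational) factor — this is what makes $W$ twice differentiable at $Q$ and legitimizes the term-by-term Taylor expansion. This follows from the fact that $\SO(d)$ is a compact embedded submanifold, so the nearest-point projection is well-defined and smooth on a tubular neighborhood, combined with the explicit identification of that projection with the polar factor (a standard fact: $\min_{R\in\SO(d)}|C-R|^2$ is attained at the rotational polar factor of $C$ when $\det C > 0$, which holds for $C$ near $\id$). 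Once this regularity is in hand, everything else is the routine matrix-algebra expansion sketched above, and one finishes by translating back through the two changes of variable (the choice of orthonormal bases, and the right-multiplication by $R^T$) to recover the coordinate-free statement $D^2W_Q(B,B) = 2|\sym BQ^T|^2$ with $\sym$ taken with respect to $\g_2$.
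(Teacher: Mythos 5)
Your proof is correct, and it rests on the same key ingredient as the paper's: the explicit identification of the nearest rotation to $A$ as the polar factor $A(A^TA)^{-1/2}$ (which the paper takes from \cite{Kah11} and writes as $W(A) = |A - A(A^TA)^{-1/2}|^2$). Where you differ is in the computational route. The paper differentiates this explicit formula directly at a general $Q\in\SO(\g_1,\g_2)$, once and then twice, and the intermediate first-derivative expression is fairly involved before everything collapses at $Q^TQ=I$. You instead pass to $\g_i$-orthonormal bases, right-multiply by $R^T$ to reduce the expansion point to the identity of $\SO(d)$, and then read off the quadratic term from the Taylor expansion $(I + E + E^T + E^TE)^{1/2} = I + \operatorname{sym}E + O(|E|^2)$, so $\tilde W(I+E) = |\operatorname{sym}E|^2 + O(|E|^3)$. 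This is a genuine simplification over the paper's calculation: the reduction to the identity kills all the $A^{-1}$, $A^{-T}$ factors that clutter the direct differentiation, and what remains is a one-line expansion. The price you pay is that you must verify that the coordinate-free statement (with $Q^T$ the $\g_1$--$\g_2$-adjoint and $\operatorname{sym}$ the $\g_2$-symmetrization) is recovered after translating back from matrices, and that the differentiability of $W$ is justified via the tubular-neighborhood/smoothness of the nearest-point projection — both of which you address. Your alternative route via the support function $g(A)=\max_{Q'}\langle A,Q'\rangle$ and the envelope theorem is also fine in spirit but is sketchier as written; the main argument stands on its own.
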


\begin{proof}
If $A$ is an orientation-preserving isomorphism, then \cite{Kah11}
\[
W(A) = |A - A(A^TA)^{-1/2}|_{\Hom(V_1,V_2)}^2.
\]
Differentiating once, 
\[
D_AW(B) = 2 \brk{A - A(A^TA)^{-1/2}, B - B(A^TA)^{-1/2} - \tfrac12 A (A^TA)^{-1/2}A^{-1} B - \tfrac12 AB^T A^{-T} (A^TA)^{-1/2}}_{\Hom(V_1,V_2)},
\]
thus obtaining that $D_QW(B)=0$ for $Q\in \SO(\g_1,\g_2)$, since $Q^T Q=I_{V_1}$. Differentiating once more and evaluating at $Q\in \SO(\g_1,\g_2)$,
\[
D^2_QW(B,B) = 2\, |B - B - \tfrac12 B - \tfrac12 Q B^T Q^{-T}|_{\Hom(V_1,V_2)}^2 = 
\tfrac{1}{2} |B Q^T  + Q B^T|_{\End(V_2)}^2 = 
2\, |\sym BQ^T|_{\End(V_2)}^2,
\]
where in the second passage we used the fact that $Q$ is an isometry.
\end{proof}

\begin{proposition}[Lower-semicontinuity]
\label{prop:lsc}
Let $\feps\to (\Psi,\nabs u)$ in the sense of Definition~\ref{def:limit}, where $\Psi\in\Iso(\S)$ and $u \in W^{1,2}\mathfrak{X}(\M)$. Then,
\[
E_0(u) \le \LiminfEps \Eeps(\feps).
\]
\end{proposition}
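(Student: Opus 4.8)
The plan is to reduce the lower-semicontinuity statement to a local computation in coordinate patches, where we may use the second-order Taylor expansion of the energy density $W(A) = \dist^2(A,\SO)$ provided by Lemma~\ref{lem:SO}. First I would fix an $\e$-truncation $\hfeps$ and isometries $\Feps \to \Psi$ as in Definition~\ref{def:limit}, and observe that by Lemma~\ref{lem:almost_the_same} and Property (b) of the truncation, replacing $\feps$ by $\hfeps$ and replacing the well $\SO(\geps,\feps^*\s)$ by $\SO(\geps,\hfeps^*\s)$ changes the energy by $o(1)$; indeed the integrand differs only on a set of volume $O(\e^2)$, over which it is $O(\e^{-2})\cdot O(1)$, and elsewhere the substitution is exact. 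Thus it suffices to bound $\liminf \frac{1}{\e^2}\int_\M \dist^2(d\hfeps,\SO(\geps,\hfeps^*\s))\,\VolEps$ from below by $E_0(u)$. Using Lemma~\ref{lem:Qeps}, write $\SO(\geps,\hfeps^*\s) = \SO(\s,\hfeps^*\s)\circ\Qeps$, so that, distances being $\e$-uniformly equivalent in $(\M,\geps)$ and $(\M,\s)$, it is enough to work with $\dist^2_{\s,\hfeps^*\s}(d\hfeps\,\Qeps^{-1},\SO(\s,\hfeps^*\s))$ against $\VolS$; by \eqref{eq:Qeps0}--\eqref{eq:Qeps}, $\Qeps^{-1} = \id - \e\xi + o(\e)$ in the appropriate sense.

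Next, I would localize: cover $\M$ (up to a null set) by finitely many coordinate patches $V$ as in the proof of Proposition~\ref{prop:J_is_a_gradient}, on each of which $\hfeps(V)$ lies in a chart for $\e$ small. On such a patch, transport $d\hfeps|_p$ from $T_{\hfeps(p)}\S$ to $T_p\M = T_{\iota(p)}\S$ (assuming w.l.o.g. $\Feps = \iota$, as in \eqref{eq:Iso_and_par_trans}) via $\Pi_{\hfeps}^{\iota}$; since the energy density is built from Riemannian inner products, this parallel transport (a fiberwise isometry over each point) does not change $\dist^2$ to the well $\SO$, because it conjugates $\SO(\s,\hfeps^*\s)|_p$ isometrically to $\SO(T_p\M,T_p\M) = \SO(\s)|_p$. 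Hence the integrand equals $\dist^2_{\s}\!\big(\Pi_{\hfeps}^{\iota}\circ d\hfeps\circ\Qeps^{-1},\, \SO(\s)|_p\big)$, and setting $A_\e := \Pi_{\hfeps}^{\iota}\circ d\hfeps$, Definition~\ref{def:limit}(c) together with Lemma~\ref{lem:Qeps} gives $A_\e\,\Qeps^{-1} = \id + \e(J - \xi) + o(\e) = \id + \e(\nabs u - \xi) + o(\e)$ weakly in $L^2$. Now Lemma~\ref{lem:SO} with $Q = \id_{T_p\M}$ yields the Taylor expansion $W(\id + \e B) = \e^2\,|\sym B|^2 + o(\e^2)$, with the error controlled uniformly because the relevant sequences are $L^2$-bounded and $\e B_\e \to 0$ in measure; so $\frac{1}{\e^2}\dist^2 \approx |\sym(\nabs u - \xi)^{(\e)}|^2$ where $B_\e := (A_\e\Qeps^{-1}-\id)/\e \weakly \nabs u - \xi$. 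Finally, $|\sym B|^2$ is a (convex) quadratic form in $B$, hence weakly-lower-semicontinuous in $L^2$, giving
\[
\LiminfEps \frac{1}{\e^2}\int_V \dist^2 \ge \int_V |\sym(\nabs u - \xi)|^2\,\VolS = \frac14\int_V |\mathcal{L}_u\s - \h|^2\,\VolS,
\]
using $\mathcal{L}_u\s = 2(\sym\nabs u)^\flat$ and $\h = 2(\sym\xi)^\flat$ from Lemma~\ref{lem:Qeps}. Summing over the patches of the cover gives the claim.

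The main obstacle I expect is making the Taylor-expansion step rigorous: one must show that $\frac{1}{\e^2}W(\id + \e B_\e) - |\sym B_\e|^2 \to 0$ in $L^1(\M)$ (or at least that its negative part does), even though $B_\e$ is only $L^2$-bounded, not $L^\infty$-bounded. The standard device is to split the domain into the set where $|\e B_\e|$ is small — where the quadratic remainder estimate $|W(\id+\e B)-\e^2|\sym B|^2|\lesssim \e^3|B|^3$ from Lemma~\ref{lem:SO} combined with a truncation argument (à la Friesecke--James--Müller) handles the error — and its complement, which has vanishing measure and on which one uses the uniform Lipschitz bound (Property (a) of the truncation) to see that $B_\e$ is in fact pointwise bounded by $C/\e$ there, so the contribution is $O(1)\cdot\Vol(\{|\e B_\e|\text{ large}\}) = o(1)$. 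One must also be slightly careful that the parallel transport $\Pi_{\hfeps}^\iota$ used here matches the one in Definition~\ref{def:limit}(c) and that the coordinate errors (Christoffel terms, the mismatch between $\Pi_{\hfeps}^\iota\circ d\hfeps$ and $d\Feps^{-1}\circ\Pi_{\hfeps}^{\Feps}\circ d\hfeps$) are $o(1)$, but these are exactly the estimates already established in the proof of Proposition~\ref{prop:J_is_a_gradient} and Lemma~\ref{lem:isometries}, so they can be invoked rather than redone.
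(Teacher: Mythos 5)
Your overall strategy matches the paper's: pass to the truncation $\hfeps$, rewrite via parallel transport and $\Qeps$ so that the integrand becomes $\dist^2(\id + \e\Jeps,\SO(\geps,\s))$, Taylor-expand using Lemma~\ref{lem:SO}, and conclude by weak lower semicontinuity of the quadratic form $B\mapsto|\sym B|^2$. (One minor simplification: the paper does the expansion globally, without localizing to coordinate patches; parallel transport and Lemma~\ref{lem:SO} work fiberwise, so the localization you introduce is unnecessary here, though harmless.)

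There is, however, a genuine gap in your reduction step. You claim that replacing $\feps$ by $\hfeps$ (and $\SO(\geps,\feps^*\s)$ by $\SO(\geps,\hfeps^*\s)$) changes the energy by $o(1)$, and analogously that the contribution of the set $\{|\e B_\e|\text{ large}\}$ is $o(1)$. Both claims are false in general: the relevant bad sets have volume $O(\e^2)$, and the rescaled integrand $\frac{1}{\e^2}\dist^2$ is only $O(\e^{-2})$ there (or $\lesssim |B_\e|^2$, which is merely $L^1$-bounded, not uniformly integrable), so the contribution is $O(1)$ but not $o(1)$. Consequently your proposed reduction, namely that it suffices to bound $\LiminfEps\frac{1}{\e^2}\int_\M\dist^2(d\hfeps,\SO(\geps,\hfeps^*\s))\,\VolEps$ from below by $E_0(u)$, does not yield the stated proposition: one has $\Eeps(\feps)\ge\frac{1}{\e^2}\int_\M\eta_\e\,\dist^2(d\hfeps,\cdots)\,\VolEps$ where $\eta_\e=\mathbf{1}_{\{\feps=\hfeps\}}$, but the full integral exceeds this $\eta_\e$-weighted one by an amount that is $O(1)$, so a lower bound on the full integral does not transfer to a lower bound on $\Eeps(\feps)$.

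The correct device — and the one the paper uses — is not to try to show the bad-set contribution vanishes, but to drop it (it is nonnegative, so dropping only helps in the lower-semicontinuity direction) and carry the indicators $\eta_\e$ and $\chi_\e$ all the way through to the weak-convergence step. Since $\eta_\e\chi_\e\to 1$ boundedly in measure and $\Qeps\to\id$, $\VolEps/\VolS\to 1$ uniformly, the product $(\VolEps/\VolS)^{1/2}\eta_\e\chi_\e\Jeps\Qeps^T$ still converges weakly in $L^2$ to $\nabs u - \xi$ (product of a boundedly-in-measure-converging sequence and a weakly-$L^2$-converging sequence), and then the weak lower semicontinuity of $\int|\sym(\cdot)|^2$ applied to this weighted sequence gives the conclusion. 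The higher-order remainder $\omega$-term vanishes in $L^1$ because on $\{\chi_\e=1\}$ one has $|\e\Jeps|<\e^{1/2}$, so $\omega(\e\Jeps)/(\e^2|\Jeps|^2)\to 0$ uniformly while $|\Jeps|^2$ is $L^1$-bounded. So the essential idea you were missing is that the cutoffs must be retained inside the weakly converging sequence rather than estimated away, since the latter is quantitatively impossible.
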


\begin{proof}
Let $\hfeps\in W^{1,\infty}(\M;\S)$ and $\Feps\in\Iso(\S)$ be as in Definition~\ref{def:limit},
and let $\Qeps$ and $\xi$ be defined as in Lemma~\ref{lem:Qeps}.  
Let $\eta_\e$ be the indicator functions of the sets $\{\feps = \hfeps\}$, which converges to $1$ boundedly in measure.
Now,
\[
\begin{split}
\Eeps(\feps) &\ge \frac{1}{\e^2} \int_\M \eta_\e\, \dist^2(d\feps,\SO(\geps,\feps^*\s))\, \VolEps \\
&= \frac{1}{\e^2} \int_\M \eta_\e\, \dist^2(d\hfeps,\SO(\geps,\hfeps^*\s))\, \VolEps  \\
&= \frac{1}{\e^2} \int_\M \eta_\e\, \dist^2(d\Feps^{-1}\circ \Pi_{\hfeps}^{\Feps}\circ d\hfeps,\SO(\geps,\s))\, \VolEps  \\
&= \frac{1}{\e^2} \int_\M \eta_\e\, \dist^2(\Qeps + \e\,\Jeps,\SO(\geps,\s))\, \VolEps ,
\end{split}
\]
where 
\[
\Jeps = \frac{d\Feps^{-1}\circ \Pi_{\hfeps}^{\Feps}\circ d\hfeps - \id_{T\M}}{\e} +  \frac{\id_{T\M} - \Qeps}{\e}.
\]
The passage to the third line follows from parallel transport and $d\Feps$ being linear isometries of the corresponding inner-product spaces.
By \eqref{eq:Qeps} and Definition~\ref{def:limit},
\[
\Jeps \weakly \nabs u  - \xi
\qquad\text{in $L^2\W^1(\M;T\M)$}.
\]
The rest of the proof is standard: 
Let $\chi_\e$ be the indicator function of the set $\{|\Jeps|<\e^{-1/2}\}$; since $\Jeps$ is uniformly bounded in $L^2$, the function $\chi_\e$ converges to 1 boundedly in measure. Expanding to quadratic order, using Lemma~\ref{lem:SO},
\[
\begin{split}
\Eeps(\feps) &\ge \frac{1}{\e^2} \int_\M \eta_\e \chi_\e \dist^2(\Qeps + \e \Jeps,\SO(\geps,\s))\, \VolEps \\
&= \int_\M \eta_\e \chi_\e \brk{|\sym\Jeps \Qeps^T|^2 + |\Jeps|^2 \frac{\omega(\e \,\Jeps)}{\e^2 |\Jeps|^2}} \frac{\VolEps}{\VolS}\VolS,
\end{split}
\]
where $\omega(A)/|A|^2\to 0$ as $|A|\to0$. 
Since the product of an $L^2$-weakly converging sequence and a sequence converging in measure boundedly converges weakly in $L^2$ to the product of the limits, and since $\Qeps\to\id_{T\M}$ and $\frac{\VolS}{\VolEps} \to 1$ uniformly (the latter due to the uniform convergence $\geps\to \s$),
\[
\brk{\frac{\VolEps}{\VolS}}^{1/2} \eta_\e \chi_\e \Jeps \Qeps^T \weakly  \nabs u  - \xi \quad \text{in $L^2$}.
\]
Letting $\e\to0$, using the $L^2$-boundedness of $\Jeps$ and the lower-semicontinuity of quadratic forms,
\[
\liminf_{\e\to0} \Eeps(\feps)  \ge \int_\M |\sym(\nabs u  - \xi)|^2\,\VolS.
\]
By applying a musical isomorphism $\flat: T^*\M\otimes T\M \to T^*\M\otimes T^*\M$, 
along with \eqref{eq:calL} and \eqref{eq:xi_h}, we obtain the desired result.
\end{proof}

The following lemma will be used in the construction of recovery sequences:

\begin{lemma}
\label{lem:DOC92}
Let $u \in \mathfrak{X}(\overline{\M})$, let $\Psi\in\Iso(\S)$, and let $\feps\in C^\infty(\overline{\M};\S)$ be defined by
\[
\feps(p) = \exp_{\Psi(p)}(\e\, d\Psi(u(p))).
\] 
Then, $\feps\to(\Psi,\nabs u)$ uniformly in the sense of Definition~\ref{def:limit}.
\end{lemma}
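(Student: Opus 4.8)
The plan is to verify directly the three conditions in Definition~\ref{def:limit}, taking the obvious choices $\hfeps = \feps$ (no truncation needed, since $\feps$ is already smooth and, as we will see, uniformly Lipschitz) and $\Feps = \Psi$ for every $\e$. First I would check that $\hfeps = \feps$ is a legitimate $\e$-truncation in the sense of Definition~\ref{def:truncation}: property (a) holds because $\feps$ is a smooth family of maps depending smoothly (in fact affinely to leading order) on $\e$ with $\feps \to \Psi$ in $C^\infty$, so $\kappa\circ\feps$ is uniformly bounded in $W^{1,\infty}$; properties (b) and (c) are trivial since $\{\feps \ne \feps\} = \emptyset$ and the $W^{1,2}$ difference is zero. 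Likewise $\Feps = \Psi$ trivially satisfies condition (a) of Definition~\ref{def:limit} ($\Feps \to \Psi$) and condition (b), since $d_\s(\feps(p),\Psi(p)) = |\e\, d\Psi(u(p))|_\s = \e\,|u(p)|_\s \lesssim \e$ uniformly on the compact set $\overline{\M}$, hence $\|\kappa\circ\feps - \kappa\circ\Psi\|_{L^2} \lesssim \e$.

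The substance is condition (c): I must show
\[
\frac{d\Psi^{-1}\circ \Pi_{\feps}^{\Psi}\circ d\feps - \id_{T\M}}{\e} \to \nabs u \qquad\text{uniformly.}
\]
Since $\Psi$ is an isometry, $d\Psi^{-1}\circ \Pi_{\feps}^{\Psi} = \Pi_{\Psi^{-1}\circ\feps}^{\iota}\circ d\Psi^{-1}$, so (exactly as in \eqref{eq:Iso_and_par_trans}) it is enough to treat the case $\Psi = \iota$, i.e.\ $\feps(p) = \exp_p(\e\,u(p))$, and show $\e^{-1}(\Pi_{\feps}^{\iota}\circ d\feps - \id_{T\M}) \to \nabs u$ uniformly. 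Here the key observation is that $p\mapsto \feps(p) = \exp_p(\e\,u(p))$ is precisely the time-$\e$ map of a geodesic-like variation, and its covariant derivative is controlled by the formula for the differential of the exponential map. Concretely, working in a coordinate patch and writing $\gamma_p(t) = \exp_p(t\,\e\,u(p))$ (or, more simply, the straight-line variation as in the proof of Proposition~\ref{prop:J_is_a_gradient}), one parallel-transports $d\feps|_p$ back along the short geodesic $\eta_\e(p)$ from $\feps(p)$ to $p$, whose length is $\e\,|u(p)| = O(\e)$. Differentiating the relation $\feps = \exp(\e u)$ and using the standard expansion of $d(\exp)$ — namely $d(\exp_p)_{v} = \id + O(|v|^2)$ on the "radial-independent" part together with the Jacobi-field correction that is $O(|v|)$ and linear in $\nabla v$ — one gets, after transporting back to $T_p\M$,
\[
\Pi_{\feps}^{\iota}\circ d\feps|_p = \id_{T_p\M} + \e\,\nabs u|_p + O(\e^2),
\]
with the $O(\e^2)$ uniform in $p$ because all the geometric quantities ($u$, its derivatives, curvature of $\s$, Christoffel symbols) are bounded on the compact set $\overline{\M}$. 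Dividing by $\e$ and letting $\e\to0$ gives uniform convergence to $\nabs u$, which is condition (c) (uniformly), and also shows $J = \nabs u$, so $\feps \to (\Psi,\nabs u)$ uniformly. (One may alternatively avoid invoking the exponential map expansion by repeating verbatim the coordinate computation in the proof of Proposition~\ref{prop:J_is_a_gradient} — the terms $\Pi_{\eta_\e} - \Pi_{\gamma_\e}$ and $\Pi_{\gamma_\e} - I$ there are handled by the same holonomy and Christoffel-symbol estimates, which are now trivially uniform since $U_\e = u + O(\e)$ is a fixed smooth field up to $O(\e)$.)

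The main obstacle, such as it is, is bookkeeping the $O(\e^2)$ error in condition (c) \emph{uniformly} in $p$: one must be sure that the second-order Taylor remainder in the expansion of $\exp$, together with the holonomy correction from transporting $d\feps$ back along $\eta_\e$, really is $O(\e^2)$ with a constant independent of $p\in\overline{\M}$. This follows from compactness of $\overline{\M}$ and smoothness of $u$ and of the metric $\s$ — all relevant bounds (injectivity radius, curvature, $\|u\|_{C^2}$, Christoffel symbols in a finite atlas) are uniform — so there is no real difficulty, only care. Everything else (conditions (a), (b), and the verification that $\feps$ is a valid $\e$-truncation) is immediate.
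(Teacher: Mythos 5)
Your proof is correct, but it arrives at condition~(c) by a genuinely different route than the paper. The paper's proof, after the same reduction to $\Psi=\iota$, considers the parametrized surface $B(\e,t)=\exp_{\gamma(t)}(\e\,u(\gamma(t)))$ for a curve $\gamma$ with $\gamma(0)=p$, $\dot\gamma(0)=X$, and computes the two mixed covariant derivatives $\tfrac{D}{\partial t}\big|_{t=0}\tfrac{\partial}{\partial\e}\big|_{\e=0}B$ and $\tfrac{D}{\partial\e}\big|_{\e=0}\tfrac{\partial}{\partial t}\big|_{t=0}B$; the first equals $(\nabs_X u)_p$ essentially by definition, the second equals $\lim_\e\e^{-1}(\Pi_{\feps}^\iota\circ d\feps(X)-X)$ by a standard identity (do Carmo), and their equality is just the symmetry of the Levi--Civita connection. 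This yields the pointwise limit with no expansion of $\exp$ and no Jacobi-field bookkeeping at all; uniformity is then handled separately via the coordinate estimates of Proposition~\ref{prop:J_is_a_gradient}. You instead Taylor-expand $d(\exp)$ directly (invoking the Jacobi-field correction) to get $\Pi_{\feps}^\iota\circ d\feps=\id+\e\,\nabs u+O(\e^2)$ uniformly, or alternatively re-run the coordinate computation of Proposition~\ref{prop:J_is_a_gradient}. Both of your routes work and have the mild advantage of producing the quantitative $O(\e^2)$ remainder (hence uniformity) in one stroke, at the cost of more explicit bookkeeping; the paper's symmetry-of-mixed-derivatives argument is shorter and avoids any expansion, but needs a separate step for uniformity. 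Your preliminary verifications (that $\hfeps=\feps$ is a valid $\e$-truncation, that $\Feps=\Psi$ satisfies (a)--(b), the reduction to $\Psi=\iota$ via \eqref{eq:Iso_and_par_trans}) agree with the paper.
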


\begin{proof}
Since the exponential map and parallel transport are invariant under isometries, it suffices to prove the claim for $\Psi = \iota$.
First, $\|\kappa\circ \feps - \kappa\|_{L^2} \lesssim \e$ because $d_\s(p,\exp_p(v)) = |v|$, from which it also follows that the parallel transport from $\feps(p)$ to $p$ is well-defined for $\e$ small enough.
Let $p\in \M$, let $X\in T_p\M$ and let $\gamma$ be a curve in $\M$ satisfying $\gamma(0) = p$ and $\dot{\gamma}(0) = X$. 
Consider the parametrized surface in $\M$ given by 
\[
B(\e,t) = \feps(\gamma(t)) = \exp_{\gamma(t)}(\e\,  u (\gamma(t)).
\]
First,
\[
\pdLim{\e} B(\e,t) =  u (\gamma(t)),
\]
which is a vector field along $\gamma$. Taking the $\s$-covariant derivative with respect to $t$,
\[
\DerivLim{t} \pdLim{\e} B(\e,t) = (\nabs_X u)_p.
\]
Second,
\[
\pdLim{t} B(\e,t) = d\feps(X),
\]
which is a vector field along the curve $\e\to \feps(p)$.
By \cite[p. 56, Ex.~2]{DOC92},
\[
\DerivLim{\e} \pdLim{t} B(\e,t) = \LimEps \frac{\Pi_{\feps}^\iota \circ d\feps(X) - X}{\e}
\]
Finally, by the symmetry of the connection, these mixed derivatives are equal \cite[p. 68, Lemma 3.4]{DOC92}, 
which yields 
that
\[
 \frac{\Pi_{\feps}^\iota \circ d\feps - \id_{T\M}}{\e} \to \nabs u
\]
pointwise.
To verify that the convergence is in fact uniform, we may perform this analysis in local coordinate patches, as in Proposition~\ref{prop:J_is_a_gradient}.
\end{proof}

\begin{proposition}[Recovery sequence]
\label{prop:recovery}
Let $u \in W^{1,2}\mathfrak{X}(\M)$ and let $\Psi\in\Iso(\S)$. Then there exists a sequence $\feps\in W^{1,2}(\M;\S)$ converging to $(\Psi,\nabs u)$ in the sense of Definition~\ref{def:limit}, such that
\[
E_0(u) = \LimEps \Eeps(\feps).
\]
\end{proposition}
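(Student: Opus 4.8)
\textbf{Proof plan for Proposition~\ref{prop:recovery} (recovery sequence).}

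The plan is to first handle the case of a smooth vector field $u \in \mathfrak{X}(\overline{\M})$, for which Lemma~\ref{lem:DOC92} already gives a converging sequence, and then pass to general $u \in W^{1,2}\mathfrak{X}(\M)$ by density and a diagonal argument. For smooth $u$, set $\feps(p) = \exp_{\Psi(p)}(\e\, d\Psi(u(p)))$; by Lemma~\ref{lem:DOC92} we have $\feps \to (\Psi,\nabs u)$ uniformly, with $\hfeps = \feps$ and $\Feps = \Psi$ serving as the auxiliary truncation and isometry (no truncation is needed since $\feps$ is already smooth and uniformly Lipschitz for $\e$ small). It remains to compute $\LimEps \Eeps(\feps)$. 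Writing $d\Feps^{-1}\circ \Pi_{\feps}^{\Feps}\circ d\feps = \Qeps + \e\,\Jeps$ exactly as in the proof of Proposition~\ref{prop:lsc}, where now $\Jeps \to \nabs u - \xi$ \emph{uniformly} (combining the uniform convergence in Lemma~\ref{lem:DOC92} with \eqref{eq:Qeps} — here one needs $\h$, hence $\xi$, to be regular enough, which is arranged by the density step below), one expands the integrand to quadratic order using Lemma~\ref{lem:SO}. Because all convergences are now uniform (or strong in $L^2$), the error term $\omega(\e\Jeps)/\e^2$ goes to zero in $L^1$ and the factors $\Qeps \to \id_{T\M}$, $\VolEps/\VolS \to 1$ uniformly, so
\[
\LimEps \Eeps(\feps) = \int_\M |\sym(\nabs u - \xi)|^2\,\VolS = \frac14 \int_\M |\mathcal{L}_u\s - \h|^2\,\VolS = E_0(u),
\]
using the musical-isomorphism identities \eqref{eq:calL}, \eqref{eq:xi_h} as in Proposition~\ref{prop:lsc}. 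This matches the lower bound from Proposition~\ref{prop:lsc}, giving equality.

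For general $u \in W^{1,2}\mathfrak{X}(\M)$, one approximates $u$ by a sequence $u_k \in \mathfrak{X}(\overline{\M})$ with $u_k \to u$ strongly in $W^{1,2}$; since $E_0$ is continuous with respect to strong $W^{1,2}$-convergence (it is a continuous quadratic functional of $\nabs u$ composed with the continuous affine map $v \mapsto \mathcal{L}_v\s - \h$ on $L^2$), we have $E_0(u_k) \to E_0(u)$. For each $k$, the construction above yields a sequence $f^{(k)}_\e \to (\Psi,\nabs u_k)$ with $\Eeps(f^{(k)}_\e) \to E_0(u_k)$. A standard diagonalization — extracting $\e(k) \to 0$ such that $f_k := f^{(k)}_{\e(k)}$ satisfies $|\Eeps[k](f_k) - E_0(u_k)| \to 0$ and $f_k \to (\Psi,\nabs u)$ in the sense of Definition~\ref{def:limit} — then produces the desired recovery sequence. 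Care is needed in the diagonalization step to ensure the convergence $f_k \to (\Psi,\nabs u)$ of Definition~\ref{def:limit} genuinely survives: this requires controlling simultaneously the three conditions (a)--(c) of that definition, in particular that the $L^2$-distance between the auxiliary $\Feps$ (here $\Psi$, fixed) and $\hfeps = f^{(k)}_{\e(k)}$ stays $\lesssim \e(k)$, and that the quantity $\e(k)^{-1}(d\Psi^{-1}\circ\Pi_{f_k}^{\Psi}\circ df_k - \id_{T\M})$ converges weakly in $L^2$ to $\nabs u$. The latter follows from the uniform (in $p$, for fixed $k$) convergence to $\nabs u_k$ together with $\nabs u_k \to \nabs u$ in $L^2$, via a standard estimate on the diagonal.

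\textbf{Main obstacle.} The principal subtlety is not the quadratic expansion — that is routine given Lemmas~\ref{lem:SO} and~\ref{lem:DOC92} and mirrors the lower-semicontinuity proof — but rather the interaction between the density approximation and the non-Euclidean notion of convergence in Definition~\ref{def:limit}. Specifically, in the diagonal argument one must verify that the auxiliary objects $(\hfeps,\Feps)$ can be chosen consistently so that condition (c), which involves parallel transport $\Pi^{\Feps}_{\hfeps}$ along geodesics, remains well-defined and converges; since $\Feps = \Psi$ is fixed here this is manageable, but one must still confirm that $d_\s(f_k(p),\Psi(p))$ stays below the injectivity radius uniformly in $p$ and that the $W^{1,2}$-errors introduced by replacing $u$ with $u_k$ and by taking $\e(k)$ small are controlled simultaneously. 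A secondary point is ensuring $\xi$ (equivalently $\h$) has enough regularity for the convergence $\Jeps \to \nabs u - \xi$ to be usable at the level needed; since $\h \in L^2$ only, one should instead argue that $\Jeps \to \nabs u_k - \xi$ in $L^2$ (not uniformly) for the general case, with the uniform statement reserved for the smooth building blocks, and this is exactly why the two-step structure (smooth case first, then density) is the right approach.
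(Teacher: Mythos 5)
Your overall strategy matches the paper's: handle smooth $u\in\mathfrak{X}(\overline{\M})$ via Lemma~\ref{lem:DOC92} with $\Feps=\Psi$ and $\hfeps=\feps$, then pass to general $u\in W^{1,2}\mathfrak{X}(\M)$ by density and a diagonal argument. The diagonalization step is also sketched at roughly the level of detail the paper provides (they track three functions $\e_1,\e_2,\e_3$ controlling Lipschitz bounds, energy error, and the metrized weak-$L^2$ distance to $\nabs u_n$).

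There is, however, a genuine gap in the smooth-case computation. You assert that $\Jeps\to\nabs u-\xi$ \emph{uniformly}, ``combining the uniform convergence in Lemma~\ref{lem:DOC92} with \eqref{eq:Qeps}.'' But \eqref{eq:Qeps} gives only $L^2$-convergence of $(\Qeps-\id_{T\M})/\e$ to $\xi$, and $\xi$ is merely an $L^2$ tensor field (it is determined by $\h$, which is only assumed to be $L^2$). Hence $\Jeps\to\nabs u-\xi$ strongly in $L^2$, not uniformly — the paper makes exactly this point (``the convergence is strong rather than weak''), and the strong-vs-uniform distinction is not cosmetic. Your parenthetical remark that the needed regularity of $\h$ (hence $\xi$) ``is arranged by the density step below'' is incorrect: the density step approximates the displacement $u$, not the prescribed incompatibility $\h$, so $\xi$ is never improved beyond $L^2$ at any stage. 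Consequently the claim ``Because all convergences are now uniform (or strong in $L^2$), the error term $\omega(\e\Jeps)/\e^2$ goes to zero in $L^1$'' is not justified as stated when $\Jeps$ is unbounded in $L^\infty$.

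The missing idea is precisely the truncation by the indicator $\chi_\e$ of $\{|\Jeps|<\e^{-1/2}\}$. On the good set, one uses the uniform bound $|\e\Jeps|<\e^{1/2}$ to control $\omega(\e\Jeps)/\e^2|\Jeps|^2$ together with the strong $L^2$-convergence of $\Jeps\Qeps^T$; on the bad set, the integrand $\e^{-2}\dist^2(\Qeps+\e\Jeps,\SO(\geps,\s))$ is bounded by a constant times $(1-\chi_\e)|\Jeps|^2$, which tends to zero in $L^1$ because $|\Jeps|^2$ is uniformly integrable (by strong $L^2$-convergence of $\Jeps$) and $1-\chi_\e\to0$ boundedly in measure. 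Without this split — or some equivalent device — the upper bound $\limsup_\e\Eeps(\feps)\le E_0(u)$ does not follow, and this is the one place where the recovery-sequence proof genuinely requires more than what appears in the liminf proof.
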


\begin{proof}
Suppose first that $u $ is smooth up to the boundary. 
Set $\Feps = \Psi$ and define for every $p\in\M$,
\[
\feps(p) = \exp_{\Psi(p)}(\e\, d\Psi(u(p))).
\]
Since $u\in \mathfrak{X}(\overline{\M})$, there is not need to introduce an $\e$-truncation; we note that even though the Lipschitz constants of $\feps$ depend on $u$, for every $u$ there exists an $\e_1(u)$ such that, say, $\operatorname{Lip} \feps < 2$ for every $\e < \e_1(u)$.
By Lemma~\ref{lem:DOC92}, $\feps \to (\Psi,\nabs u)$. 
We then proceed as in the proof of the lower-semicontinuity, obtaining first
\[
\Eeps(\feps) = \frac{1}{\e^2} \int_\M \dist^2(\Qeps + \e\,\Jeps,\SO(\geps,\s))\, \VolEps ,
\]
where
\[
\Jeps = \frac{d\Psi^{-1}\circ \Pi_{\feps}^{\Psi}\circ d\feps - \id_{T\M}}{\e} +  \frac{\id_{T\M} - \Qeps}{\e}.
\]
Note that by Lemma~\ref{lem:DOC92},  
\[
\Jeps \to \nabs u  - \xi
\qquad\text{in $L^2\W^1(\M;T\M)$},
\]
i.e., the convergence is strong rather than weak.

Since we need to obtain an equality, we need to work more delicately with 
the indicator function $\chi_\e$ of the set $\{|\Jeps|<\e^{-1/2}\}$, obtaining 
\[
\begin{split}
\Eeps(\feps) &= 
\int_\M  \chi_\e \brk{|\sym\Jeps \Qeps^T|^2 + |\Jeps|^2 \frac{\omega(\e \,\Jeps)}{\e^2 |\Jeps|^2}} \frac{\VolEps}{\VolS}\VolS \\
&+ 
\frac{1}{\e^2} \int_\M  (1-\chi_\e) \, \dist^2(\Qeps + \e \Jeps,\SO(\geps,\s))\, \VolEps,
\end{split}
\]
where $\omega(A)/|A|^2\to 0$ as $|A|\to0$. The first term tends to $E_0(u)$, using the same arguments as in Proposition~\ref{prop:lsc}, and the fact that  quadratic forms are continuous with respect to strong $L^2$-convergence.
As for the second term, consider the integrand,
\[
\frac{1}{\e^2} (1-\chi_\e) \, \dist^2(\Qeps + \e \Jeps,\SO(\geps,\s)) \lesssim (1-\chi_\e) |\Jeps|^2.
\]
Since $\Jeps$ converges strongly in $L^2$, $|\Jeps|^2$ is uniformly-integrable. The product of a uniformly-integrable sequences and a sequence converging to zero boundedly in measure converges to zero in $L^1$, and hence the second term tends to zero. We have thus proved the theorem when $u$ is smooth.

Now, let  $u \in W^{1,2}\mathfrak{X}(\M)$ and let $u_n\in\mathfrak{X}(\overline{\M})$ be a sequence converging in $W^{1,2}$ to $u$.  By the $W^{1,2}$-continuity of the limit energy,
\[
\limn E_0(u_n) = E_0(u).
\]
By the first part of the proof, since $u_n$ are smooth, there exists   for every $n$ a sequence $\feps^n\in W^{1,\infty}(\M;\S)$ of uniformly-Lipschitz maps, such that $\feps^n \to (\Psi,\nabs u_n)$ and 
\[
\LimEps \Eeps(\feps^n) = E_0(u_n).
\]
It remains to invoke a diagonal argument to produce a sequence $\feps$ satisfying the desired properties.
First, as long as we take $\feps^n$ with $\e< \e_1(n) := \e_1(u_n)$, we do not need to truncate the sequence. 
Likewise, there exists for every $n$ an $\e_2(n)$, such that
\[
|\Eeps(\feps^n) - E_0(u_n)| < \frac{1}{n}
\]
for every $\e < \e_2(n)$. 
Finally, since the weak $L^2$-topology is metrizable on bounded sets, denoting by $\varrho$ such a metric on the $L^2$-ball of radius $2\|\nabs u\|_{L^2}$, there exists an $\e_3(n)$, such that 
\[
\varrho\brk{\frac{d\Psi^{-1} \circ \Pi_{\feps^n}^\Psi \circ d\feps^n- \id_{T\M}}{\e}, \nabs u_n} < \frac{1}{n}
\]
for every $\e < \e_3(n)$ (the fact that the first term in the parentheses also lies within this ball on which $\varrho$ is defined follows from the strong convergence $\feps^n\to (\Psi,\nabs u_n)$).
Thus, we may produce a function $\e_0(n) < \min\BRK{\e_1(n),\e_2(n),\e_2(n)}$, which without loss of generality can be assumed to be decreasing to zero. For every $\e < \e_0(1)$,  define $n(\e)$ such that
\[
\e \in (\e_0(n(\e)+1), \e_0(n(\e))). 
\]
By construction, $n(\e)\to\infty$ as  $\e\to0$. 
Define then,
\[
\feps = \feps^{n(\e)}.
\]
Since for every $\e < \e_0(1)$ it holds that $\e < n_0(\e)$, it follows that $\operatorname{Lip} \feps < 2$,
\[
|\Eeps(\feps) - E_0(u_{n(\e)})| < \frac{1}{n(\e)},
\]
and
\[
\varrho\brk{\frac{d\Psi^{-1} \circ \Pi_{\feps}^\Psi \circ d\feps- \id_{T\M}}{\e}, \nabs u_{n(\e)}} < \frac{1}{n(\e)}.
\]
Letting $\e\to0$, since $\varrho(\nabs u_{n(\e)},\nabs u) \to 0$, 
\[
\LimEps \varrho\brk{\frac{d\Psi^{-1} \circ \Pi_{\feps}^\Psi \circ d\feps- \id_{T\M}}{\e}, \nabs u} = 0,
\]
i.e., $\feps\to(\Psi,\nabs u)$,
and
\[
\LimEps \Eeps(\feps) = E_0(u),
\]
which completes the proof.
\end{proof}

\section{Compactness}

The last component for obtaining a limit model for weakly-incompatible elasticity is compactness:  showing that if $\feps$ is a sequence of configurations satisfying  $\Eeps(\feps) \lesssim 1$, then, modulo a subsequence, $\feps$ converges in the sense of Definition~\ref{def:limit} to $(\Psi,\nabs u)$ for some $\Psi\in\Iso(\S)$ and $u\in W^{1,2}\mathfrak{X}(\M)$. We need essentially to show the existence of an $\e$-truncation $\hfeps\in W^{1,\infty}(\M;\S)$ of $\feps$ and a sequence $\Feps$ of isometries, such that $\|\kappa\circ \hfeps - \kappa\circ \Feps\|_{L^2} \lesssim \e$, and
\[
\frac{d\Feps^{-1}\circ \Pi_{\hfeps}^{\Feps}\circ d\hfeps - \id_{T\M}}{\e}
\qquad
\text{is bounded in $L^2\W^1(\M;T\M)$}.
\]  

As exposed in the Introduction, this compactness property follows if $(\S,\s)$ satisfies a quantitative rigidity property as in Definition~\ref{def:FJM}:

\begin{proposition}
\label{prop:3.1}
Suppose that $(\S,\s)$ satisfies a quantitative rigidity property.
Let $\feps\in W^{1,2}(\M;\S)$ satisfy $\Eeps(\feps)\lesssim 1$. 
Then, there exists an isometry $\Psi\in\Iso(\S)$ and a vector field $u\in W^{1,2}\mathfrak{X}(\M)$, such that $\feps$ has a subsequence converging to $(\Psi,\nabs u)$ in the sense of Definition~\ref{def:limit}.
\end{proposition}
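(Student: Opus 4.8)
The plan is to combine the Lipschitz truncation already established (Proposition after Definition~\ref{def:truncation}) with the assumed quantitative rigidity property, then invoke the machinery of Section~2 to upgrade the crude bounds into convergence in the sense of Definition~\ref{def:limit}. First I would take the $\e$-truncation $\hfeps\in W^{1,\infty}(\M;\S)$ of $\feps$ furnished by that proposition; by Lemma~\ref{lem:almost_the_same} and Properties~(b)--(c) of the truncation, $\|\dist(d\hfeps,\SO(\s,\hfeps^*\s))\|_{L^2(\M)}^2 \lesssim \e^2(\Eeps(\feps)+1) \lesssim \e^2$, so $\hfeps$ is a map whose strain from the $\s$-isometries is $O(\e)$ in $L^2$. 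This is exactly the quantity the quantitative rigidity property (Definition~\ref{def:FJM}) controls.

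Next I would apply Definition~\ref{def:FJM} to $\hfeps$: there is an isometry $\Feps\in\Iso(\S)$ with $\|\kappa\circ\hfeps - \kappa\circ\Feps\|_{W^{1,2}(\M;\R^D)} \lesssim \|\dist(d\hfeps,\SO(\s,\hfeps^*\s))\|_{L^2(\M)} \lesssim \e$. In particular $\|\kappa\circ\hfeps-\kappa\circ\Feps\|_{L^2}\lesssim\e$, which is Property~(b) of Definition~\ref{def:limit}, and since $\Iso(\S)$ is a compact Lie group (the manifold is closed), after passing to a subsequence $\Feps\to\Psi$ for some $\Psi\in\Iso(\S)$, giving Property~(a). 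It remains to verify Property~(c), i.e.\ that
\[
\Jeps := \frac{d\Feps^{-1}\circ\Pi_{\hfeps}^{\Feps}\circ d\hfeps - \id_{T\M}}{\e}
\]
is bounded in $L^2\W^1(\M;T\M)$; once this is known, a subsequence converges weakly in $L^2$ to some $J$, and then Proposition~\ref{prop:J_is_a_gradient} identifies $J=\nabs u$ for a vector field $u\in W^{1,2}\mathfrak{X}(\M)$, completing the argument. As in \eqref{eq:Iso_and_par_trans} one may reduce to $\Feps=\iota$ by replacing $\hfeps$ with $\Feps^{-1}\circ\hfeps$, which is still uniformly Lipschitz and still $O(\e)$-close to the inclusion in $W^{1,2}$.

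To bound $\Jeps$ I would split, exactly as in the proof of Proposition~\ref{prop:J_is_a_gradient}, into the part coming from the coordinate gradient of $F_\e := \chi\circ\hfeps\circ\chi^{-1}$ and the parallel-transport/holonomy corrections. The holonomy term $\frac1\e(\Pi_{\eta_\e}-\Pi_{\gamma_\e})$ is controlled by \eqref{eq:holonomy_estimate} by $\tfrac1\e|\kappa\circ\hfeps-\kappa|^2$, which is $o(1)$ in $L^2$ since $|\kappa\circ\hfeps-\kappa|\to0$ uniformly (Lipschitz truncation plus $L^2$-smallness) and $\tfrac1\e|\kappa\circ\hfeps-\kappa|$ is $L^2$-bounded; the term $\frac1\e(\Pi_{\gamma_\e}-I)$ is $\lesssim|U_\e|$ with $U_\e=(F_\e-\id)/\e$ bounded in $L^2$ by the $W^{1,2}$-estimate from rigidity. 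The only genuinely new input compared to Proposition~\ref{prop:J_is_a_gradient} is that here $\nabla F_\e$ need not converge; but it is uniformly bounded in $L^\infty$ (truncation) and $\|\nabla F_\e - I\|_{L^2}\lesssim\e$ from the rigidity estimate, so $\frac1\e(\Pi_{\eta_\e}\nabla F_\e - I)$ is $L^2$-bounded, and hence so is $\Jeps$. I expect the main obstacle to be purely bookkeeping: assembling the local $L^2$-bounds on $\Jeps$ over a finite cover into a global bound, and checking that the subsequence extractions (for $\Feps$, for the weak limit of $\Jeps$) are compatible — but no new idea beyond Section~2 is required, since the quantitative rigidity property is precisely the Riemannian substitute for the FJM estimate that the Euclidean proof would use at this point.
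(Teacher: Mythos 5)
Your proposal is correct and reaches the same conclusion, but it takes a genuinely different route to the $L^2$-bound on
\[
A_\e := \frac{d\Feps^{-1}\circ\Pi_{\hfeps}^{\Feps}\circ d\hfeps - \id_{T\M}}{\e}.
\]
You bound $A_\e$ by reusing the local-coordinate decomposition from the proof of Proposition~\ref{prop:J_is_a_gradient}: you work in a chart, split $A_\e$ into $\nabla U_\e$ plus the two parallel-transport/holonomy corrections of \eqref{eq:parallel_trans_estimates}, and feed the rigidity estimate in through the $W^{1,2}$-bound on $U_\e$. This is the inverse direction of the argument in Proposition~\ref{prop:J_is_a_gradient} (there, $A_\e$ bounded implies $U_\e$ bounded; here, $U_\e$ bounded implies $A_\e$ bounded), and it is valid since all three pieces of the decomposition are controlled. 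The paper instead avoids coordinates entirely at this stage: since $\kappa$ is an isometric embedding, it writes $|A_\e|\,\e = |d\kappa\circ\Pi_{\hfeps}^{\Feps}\circ d\hfeps - d\kappa\circ d\Feps|$, adds and subtracts $d\kappa\circ d\hfeps$, and bounds the two pieces by $|\kappa\circ\hfeps - \kappa\circ\Feps|$ (via the first-order parallel-transport estimate) and $|d(\kappa\circ\hfeps)-d(\kappa\circ\Feps)|$ respectively, so the $W^{1,2}$-bound from rigidity finishes the job in one line. The paper's route is shorter and does not require patching local estimates, but both are sound.

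Two small points to be aware of. First, you apply the quantitative rigidity to $\hfeps$ rather than to $\feps$; to justify the required smallness of $\|\dist(d\hfeps,\SO(\s,\hfeps^*\s))\|_{L^2}$ you need to compare strain densities at the two different base points $\feps(p)$ and $\hfeps(p)$ (the bundles $\SO(\s,\feps^*\s)$ and $\SO(\s,\hfeps^*\s)$ live over different fibers), whereas the paper sidesteps this by applying rigidity to $\feps$ directly (Lemma~\ref{lem:almost_the_same}) and then transferring closeness to $\hfeps$ via Property~(c) of the truncation. Second, the reduction to $\Feps=\iota$ via $\Feps^{-1}\circ\hfeps$ implicitly uses that $\|\kappa\circ(\Feps^{-1}\circ\hfeps)-\kappa\|_{W^{1,2}}\lesssim\e$ follows from $\|\kappa\circ\hfeps-\kappa\circ\Feps\|_{W^{1,2}}\lesssim\e$; this is true (isometries preserve the relevant norms, up to the changing evaluation points of $d\kappa$, uniformly since $\Feps\to\Psi$) but deserves a sentence. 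Neither point is a gap — just bookkeeping that the paper's shorter argument renders unnecessary.
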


\begin{proof}
By Definition~\ref{def:FJM} and Lemma~\ref{lem:almost_the_same} there exists for every $\feps$ an isometry $\Feps\in\Iso(\S)$, such that
\[
\|\kappa\circ \feps - \kappa\circ \Feps\|_{W^{1,2}(\M;\R^{D})} \lesssim \e.
\]  
Let $\hfeps\in W^{1,\infty}(\M;\S)$ be an $\e$-truncation of $\feps$. Since $\feps$ and $\hfeps$ are $O(\e)$-close in $W^{1,2}$, it follows that
\[
\|\kappa\circ \hfeps - \kappa\circ \Feps\|_{W^{1,2}(\M;\R^D)} \lesssim \e,
\]
and in particular, this holds for the $L^2$-norm.
By the compactness of $\Iso(\S)$, we may move to a subsequence such that $\Feps\to \Psi$.  
Since $\kappa$ is an isometric immersion,
\[
\begin{split}
|d\Feps^{-1}\circ \Pi_{\hfeps}^{\Feps} \circ d\hfeps - \id_{T\M}|^2 
&= |\Pi_{\hfeps}^{\Feps} \circ d\hfeps - d\Feps |^2 \\
&= |d\kappa\circ \Pi_{\hfeps}^{\Feps} \circ d\hfeps - d\kappa\circ d\Feps |^2 \\
&\lesssim |(d\kappa\circ \Pi_{\hfeps}^{\Feps} - d\kappa) \circ d\hfeps|^2 +
|\kappa\circ d\hfeps - d\kappa\circ d\Feps |^2.
\end{split}
\]
Recall that the various $d\kappa$ are evaluated at different points and therefore cannot be factored out.
Since $\kappa$ is smooth, $\hfeps$ and $\Feps$ are uniformly close, and since $|\Pi_{p}^{q}| = d_\s(p,q) + O(d_\s(p,q)^2)$,
\[
|d\kappa\circ \Pi_{\hfeps}^{\Feps} - d\kappa| \lesssim |\kappa\circ \hfeps - \kappa\circ \Feps|.
\]
Since $d\hfeps$ is uniformly bounded, we obtain upon integration that
\[
\|d\Feps^{-1}\circ \Pi_{\hfeps}^{\Feps} \circ d\hfeps - \id_{T\M}\|^2_{L^2(\M;\End(T\M))} \lesssim  \|\kappa\circ \hfeps - \kappa\circ \Feps\|^2_{W^{1,2}(\M;\R^D)},
\]
which implies that
\[
\frac{d\Feps^{-1}\circ \Pi_{\hfeps}^{\Feps} \circ d\hfeps - \id_{T\M}}{\e}
\]
is a bounded sequence in $L^2\W^1(\M;T\M)$, and hence has a weakly-converging subsequence, with limit $J$. 
By Proposition~\ref{prop:J_is_a_gradient}, $J = \nabs u$ for some $u\in W^{1,2}\mathfrak{X}(\M)$.
\end{proof}

The following theorem proves that spheres satisfy a quantitative rigidity property. 
Note however, that the proof hinges on the Euclidean FJM theorem, and the technique does not seem to be generalizable to any other non-Euclidean manifold:

\begin{theorem}
\label{thm:AKM25b}
Let $p\in (1,\infty)$. Let $(\S,\s)$ be a round $d$-sphere and denote by $\kappa:\S\to\R^{d+1}$ the standard isometric embedding. Let $\M\subset\S$ be a Lipschitz domain. There exists for every $f\in W^{1,p}(\M;\S)$ an isometry $\Psi$ of $\S$, such that
\[
\|\kappa\circ f - \kappa\circ \Psi\|_{W^{1,p}(\M;\R^{d+1})}^2 \lesssim \|\dist(df,\SO(\s,f^*\s))\|_{L^p(\M)}^2.
\]  
\end{theorem}


\begin{proof}
Without loss of generality, we can assume that $(\S,\s)$ is a unit sphere.
The idea is to ``thicken" the $d$-sphere into an open subset of $\R^{d+1}$, endowed with the Euclidean metric $\euc$.
Consider the following diagram:
\[
\begin{xy}
(-10,0)*+{(\bbS^d,\s)}="S";
(30,0)*+{(\bbS^d\times(1/2,2),\tilde\s)}="RS";
(30,20)*+{(\R^{d+1},\euc)}="R";
{\ar@{->}_{\zeta}"S";"RS"};
{\ar@{->}^{\kappa}"S";"R"};
{\ar@{->}_{\Phi(p,r) = r\,\kappa(p)}"RS";"R"};
\end{xy}
\]
The embedding $\zeta:\bbS^d\inj \bbS^d\times(1/2,2)$ is given by
\[
\zeta(p) = (p,1),
\]
and the metric $\S$ on $\bbS^d\times(1/2,2)$ is
\[
\tilde\s_{(p,r)} =  r^2\,  \s + dr \otimes dr.
\]
By construction, $\kappa:(\bbS^d,\s)\to(\R^{d+1},\euc)$, $\zeta:\bbS^d\to\bbS^d\times(1/2,2)$ and $\Phi:(\bbS^d\times(1/2,2),\tilde\s)\to(\R^{d+1},\euc)$ are  isometric embeddings, the latter into an ambient space of the same dimension. Furthermore, there exists a one-to-one correspondence between isometries in these spaces: to every isometry $\hPsi\in\SO(d+1) \subset\Iso(\R^{d+1})$ corresponds a unique isometry $\Psi \in\Iso(\bbS^d)$, such that
\[
\hPsi\circ \kappa = \kappa \circ \Psi,
\]
or equivalently,
\[
\hPsi \circ\Phi(p,r) = \Phi(\Psi(p),r) = r \, \kappa(\Psi(p))
\qquad
(p,r) \in \bbS^d\times(1/2,2).
\]

Given $\M\subset\bbS^d$, denote
\[
\tilde{\M} = \M \times (1/2,2)\subset \bbS^d\times(1/2,2)
\Textand
\W = \Phi(\tilde{\M}) \subset\R^{d+1}.
\]
Let $f\in W^{1,p}(\M; \bbS^d)$, and consider the ```thickened" map $\hf\in W^{1,p}(\tilde{\M};\bbS^d\times(1/2,2))$ given by
\[
\hf(p,r) = (f(p),r),
\]
and the map
\[
\Phi\circ \hf\circ \Phi^{-1} \in W^{1,p}(\W;\R^{d+1}).
\]
By the FJM theorem \cite{FJM02b} for maps $\W\to\R^{d+1}$, there exists an isometry $\hPsi\in\SO(d+1)$, such that
\[
\|d(\Phi\circ \hf\circ \Phi^{-1}) - \hPsi\|_{L^p(\W;\R^{d+1})} \le 
C\, \|\dist(d(\Phi\circ \hf\circ \Phi^{-1}),\SO(d+1))\|_{L^p(\W;\R^{d+1})},
\]
where $C = C(\M,p)$. Since $\Phi$ is an isometric immersion, we may change variables, obtaining
\beq
\|d(\Phi\circ \hf) - d(\hPsi\circ \Phi)\|_{L^p(\tilde{\M};\R^{d+1})} \le 
C\, \|\dist(d(\Phi\circ \hf),\SO(\S,\euc))\|_{L^p(\tilde{\M};\R^{d+1})}.
\label{eq:FJMcodim1}
\eeq
Since $\Phi\circ \hf(p,r) = r\,\kappa(f(p))$, it follows that
\[
d(\Phi\circ \hf) = (\kappa\circ f)\, dr + r\, d\kappa\circ df.
\]
We now show  that
\[
(\kappa\circ f)\, dr + r\, d\kappa\circ \SO(df) \in \SO(\S,\euc),
\]
where $\SO(df)$ is a projection of $df$ onto $\SO(f^*\s,\s)$.
Indeed, denote
\[
A = (\kappa\circ f)\, dr + r\, d\kappa\circ \SO(df).
\]
For $(p,r)\in \bbS^d\times(1/2,2)$ and $(v,t)\in T_p\bbS^d\times\R$,
\[
\begin{split}
|A(v,t)|^2 &= |t\, \kappa\circ f  + r\, d\kappa\circ \SO(df)(v)|^2 \\
&= |t\, \kappa\circ f|^2  + r^2\, |d\kappa\circ \SO(df)(v)|^2 \\
&= t^2 + r^2 |v|^2 \\
&= |(v,t)|^2,
\end{split}
\]
where in the passage to the second line we used the fact that (for this specific embedding) the images of $\kappa$ and $d\kappa$ are orthogonal,  in the passage to the third line we used the fact that the image of $\kappa$ has norm one and that $d\kappa$ is an isometric immersion, whereas the passage to the fourth line is the definition of $\tilde\s$.
Thus, $A\in \O(\tilde\s,\euc)$, and it is orientation-preserving by construction.
It follows that
\[
\dist(d(\Phi\circ \hf),\SO(\tilde\s,\euc)) \le |r\,d\kappa\circ(df - \SO(df))| \le 2 \, \dist(df,\SO(f^*\s,\s)),
\]
and therefore
\beq
 \|\dist(d(\Phi\circ \hf),\SO(\tilde\s,\euc))\|_{L^p(\M;\R^{d+1})} \le
 c\,  \|\dist(df,\SO(f^*\s,\s))\|_{L^p(\M)},
\label{eq:FJMcodim2}
\eeq
where $c = c(p,d)$.

By the above discussion, there exists an isometry $\Psi$ of $\bbS^d$, such that
\[
\hPsi\circ \Phi(p,r) = r\,\kappa(\Psi(p)),
\]
and hence
\[
d(\hPsi\circ \Phi) = (\kappa\circ\Psi)\, dr + r\,d\kappa\circ d\Psi,
\]
i.e.,
\[
d(\Phi\circ \hf) - d(\hPsi\circ \Phi) = (\kappa\circ f -  \kappa\circ\Psi)\, dr + r\, (d\kappa\circ df -  d\kappa\circ d\Psi),
\]
where as noted before, $\kappa$ and $d\kappa$ cannot be factored out.
The two terms on the right-hand side are orthogonal in $(T\bbS^d\oplus\R)\otimes\R^{d+1}$
(with respect to the metrics $\S$ and $\euc$) since $df$ and $d\Psi$ act of the $T\bbS^d$ part and $dr$ acts on the $\R$ part.
It follows that 
\[
|d(\Phi\circ \hf) - d(\hPsi\circ \Phi)| \ge r\, |d\kappa\circ df - d\kappa\circ d\Psi| \ge 
\tfrac12 |d\kappa\circ df - d\kappa\circ d\Psi|,
\]
from which it follows that
\beq
\|d\kappa\circ df - d\kappa\circ d\Psi\|_{L^p(\M;\R^{d+1})} \le c'\, \|d(\Phi\circ \hf) - d(\hPsi\circ \Phi)\|_{L^p(\tilde{\M};\R^{d+1})} ,
\label{eq:FJMcodim3}
\eeq
where $c' = c'(p,d)$.
Combining \eqref{eq:FJMcodim1}, \eqref{eq:FJMcodim2} and \eqref{eq:FJMcodim3}, together with the Poincar\'e inequality (applicable since the image of $\kappa$ is bounded), we obtain the desired result.
\end{proof}

Recently, Conti, Dolzmann and M\"uller \cite{CDM24} proved the quantitative rigidity property for an arbitrary closed $(\S,\s)$ for the case $\M=\S$:
\begin{theorem}[Conti--Dolzmann--M\"uller]
\label{thm:CDM}
Let Let $(\S,\s)$ be a compact, oriented Riemannian manifold, and let $\kappa:\S\to\R^{D}$ be an isometric embedding. 
There exists for every $f\in W^{1,p}(\S;\S)$ an isometry $\Psi$ of $\S$, such that
\[
\|\kappa\circ f - \kappa\circ \Psi\|_{W^{1,p}(\S;\R^{D})} \lesssim \|\dist(df,\SO(\s,f^*\s))\|_{L^p(\S)},
\]  
for any $p\in (1,\infty)$.
\end{theorem}

\begin{corollary}
\label{corr:sphere}
Let $(\S,\s)$ be a round $d$-sphere, or let $\M=\S$. Let $\feps\in W^{1,2}(\M;\S)$ satisfy $\Eeps(\feps) \lesssim 1$. Then, $\feps$ has a subsequence (not relabeled) satisfying $\feps\to(\Psi,\nabs u)$ for some $\Psi\in\Iso(\S)$ and $u\in W^{1,2}\mathfrak{X}(\M)$. 
\end{corollary}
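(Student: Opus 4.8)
The plan is to obtain the statement as an immediate consequence of the two preceding results: Theorem~\ref{thm:AKM23}, which shows that a round $d$-sphere satisfies the quantitative rigidity property of Definition~\ref{def:FJM} (with $\kappa$ the standard embedding $\bbS^d\hookrightarrow\R^{d+1}$), and Proposition~\ref{prop:3.1}, which converts that property into compactness.

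First I would dispose of the discrepancy in regularity. Although $\feps$ is only assumed to lie in $W^{1,p}(\M;\S)$ for some $p>1$, the energy bound $\Eeps(\feps)\lesssim 1$ already forces $\feps\in W^{1,2}(\M;\S)$: by Lemma~\ref{lem:almost_the_same} one has $\|\dist(d\feps,\SO(\s,\feps^*\s))\|_{L^2(\M)}\lesssim\e\sqrt{\Eeps(\feps)+1}\lesssim\e$, and since $\S$ is compact the fibres of $\SO(\s,\feps^*\s)$ are uniformly bounded in operator norm, so $|d\feps|\lesssim\dist(d\feps,\SO(\s,\feps^*\s))+1$ is bounded in $L^2$; hence $\kappa\circ\feps$ is bounded in $W^{1,2}(\M;\R^{d+1})$. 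From this point on we regard $\feps$ as a sequence in $W^{1,2}(\M;\S)$ with $\Eeps(\feps)\lesssim 1$.

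Next I would invoke Theorem~\ref{thm:AKM23} to conclude that the round sphere satisfies a quantitative rigidity property, so that Proposition~\ref{prop:3.1} applies verbatim. It furnishes, for a suitable subsequence, an $\e$-truncation $\hfeps\in W^{1,\infty}(\M;\S)$ of $\feps$ (existence by Lemma~\ref{lem:KMS19}) and a sequence of isometries $\Feps\in\Iso(\S)$ with $\|\kappa\circ\hfeps-\kappa\circ\Feps\|_{W^{1,2}}\lesssim\e$; using the compactness of $\Iso(\bbS^d)$ we pass to a further subsequence so that $\Feps\to\Psi$, and the quantity $\e^{-1}(d\Feps^{-1}\circ\Pi_{\hfeps}^{\Feps}\circ d\hfeps-\id_{T\M})$ is bounded in $L^2\W^1(\M;T\M)$; extracting a weak limit $J$ and applying Proposition~\ref{prop:J_is_a_gradient} gives $J=\nabs u$ for some $u\in W^{1,2}\mathfrak{X}(\M)$, which is exactly the assertion $\feps\to(\Psi,\nabs u)$ of Definition~\ref{def:limit}.

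There is no genuine obstacle at this stage: all the difficulty has already been absorbed into Theorem~\ref{thm:AKM23} --- the ``thickening'' reduction of the spherical rigidity estimate to the Euclidean FJM theorem --- and into Proposition~\ref{prop:3.1}. The only points needing a word of care are the passage from $W^{1,p}$ to $W^{1,2}$ recorded above, and the bookkeeping between the two isometry bundles $\SO(\geps,\feps^*\s)$ and $\SO(\s,\feps^*\s)$, which is handled by Lemma~\ref{lem:almost_the_same} and is already built into the proof of Proposition~\ref{prop:3.1}.
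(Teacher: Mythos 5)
Your proposal is correct and follows the same route the paper intends: the Corollary is stated without a separate proof precisely because it is the immediate combination of Theorem~\ref{thm:AKM23} (spheres satisfy the quantitative rigidity property) with Proposition~\ref{prop:3.1} (quantitative rigidity implies compactness). Your additional remark that the energy bound already forces $W^{1,2}$ regularity is a sound and useful observation that the paper leaves implicit.
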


\section{Estimating the minimal energy}
\label{sec:energy}

The limit energy is the function $E_0:W^{1,2}\mathfrak{X}(\M)\to\R$, given by
\[
E_0(u) =  \tfrac{1}{4} \|\mathcal{L}_u\s  - \h\|_{L^2(\M)}^2.
\]
In this section, we analyze its minimum, and relate it in geometric terms to the asymptotic incompatibility between the metrics $\geps$ and $\s$. Adopting the notation in \cite{KL25}, we denote by $\mathscr{C}^{1,1}(\M)$ the set of symmetric sections of $T^*\M\otimes T^*\M$. The asymptotic metric incompatibility is captured by the  tensor field $\h\in L^2\mathscr{C}^{1,1}(\M)$; viewing the map $\e\mapsto\geps$ as a map $[0,\e_0)\to L^2\mathscr{C}^{1,1}(\M)$, we may express $\h$ as 
\[
\h = \left.\deriv{}{\e}\right|_{\e=0} \geps.
\]
The goal is to relate the minimum of $E_0$ (recall that a $\Gamma$-limit always has a minimizer) with geometric properties of $\h$.

Clearly, 
\[
\min E_0 = 0
\]
if and only if $\h$ belongs to the image of the deformation operator (also known as the Killing operator), $\Def: W^{1,2}\mathfrak{X}(\M)\to L^2\mathscr{C}^{1,1}(\M)$,
\[
\Def: u\mapsto \mathcal{L}_u\s
\]
The space $L^2\mathscr{C}^{1,1}(\M)$ of symmetric tensors has been studied extensively, starting with Berger and Ebin \cite{BE69}. 
The characterization of the image of the deformation operator has its own longstanding history in the context of the so-called Saint-Venant problem \cite{Gur73}, and was also considered by Calabi in a more general Riemannian context \cite{Cal61}.  It is only recently that a complete characterization was obtained using techniques from overdetermined pseudodifferential boundary-value problems \cite{KL25}. For conciseness, we will only mention those results that are directly applicable to the present work.

The image $\mathscr{R}(\Def)$ of the deformation operator is a closed subspace of $L^2\mathscr{C}^{1,1}(\M)$, implying an orthogonal decomposition,
\[
L^2\mathscr{C}^{1,1}(\M)  = \mathscr{R}(\Def) \oplus (\mathscr{R}(\Def))^\perp,
\] 
along with continuous projections, $P_\parallel: L^2\mathscr{C}^{1,1}(\M)\to \mathscr{R}(\Def)$ and $P_\perp: L^2\mathscr{C}^{1,1}(\M)\to (\mathscr{R}(\Def))^\perp$. Thus,
\[
\min E_0 = \tfrac14 \|P_\perp\h\|_{L^2(\M)}^2.
\] 
The fact that the minimal energy depends only on the component of the metric discrepancy orthogonal to the range of the deformation operator has a geometric interpretation: 
Our elemental assumption is that the family of body manifolds can be embedded in an open subset $\M$ of the space manifold $\S$, such that the Riemannian metrics $\geps$ of the body manifolds deviate from the metric $\s$ of the space manifold (restricted to $\M$) by an $O(\e)$ correction.
In other words, for any $\e$ we fixed the (inclusion) embedding
\[
\iota:(\M,\g_\e) \to (\S,\s).
\]
This is a choice of embedding that has the correct energy scaling, but it is not the only choice.
Indeed, there is a large gauge freedom: for any other map
\[
\Phi_\e :(\M,\g_\e) \to (\S,\s),
\]
whose distance from $\iota$ is $O(\e)$ (say in $C^1$), we still obtain that the metric discrepancy is $O(\e)$, that is
\[
\|\g_\e - \Phi_\e^* \s\|_{L^2(\M)} = O(\e).
\]
By fixing $\iota$ we thus made a choice of gauge that might not be optimal.
The projection $P_\parallel \h = \mathcal{L}_u\s$ for some $u\in W^{1,2}\mathfrak{X}(\M)$ contains exactly the information of the (asymptotically) optimal gauge: 
if we define $\Phi_\e$ to be the flow of the vector field $-u$ up to time $\e$, then we obtain that the discrepancy of the metrics is, to leading order, $P_\perp\h$, rather than $\h$.
Thus, by choosing a more flexible setting, we could have assumed from the outset that the identification of the body manifolds with a submanifold of space is optimal in the sense that $\h$ is orthogonal to the range of the deformation operator.

We now relate the infimal energy to a (linearized) curvature discrepancy.
Denote by $\Rm_\g \in \Gamma(\wedge^2T^*\M\otimes \wedge^2T^*\M)$ the $(4,0)$-Riemann curvature tensor corresponding to a metric $\g$ on $\M$, and by $\Re_\g\in\Gamma(\Hom(T\M\otimes T\M))$ the associated curvature tensor given by raising two indices, 
\[
\ip{\Re_\g(X,Y),(W,Z)}_\g = \Rm_\g(X,W,Y,Z). 
\]
In other words, if a coordinate representation of $\Rm_\g$ is $\mathcal{R}_{ijkl}$, then the coordinate representation of $\Re_\g$ is $\mathcal{R}_{i\,\,j}^{\,\, k\,\,l}$.
Viewing the map $\g\mapsto \Re_\g$ as a map $\mathscr{C}^{1,1}(\M)\to \Gamma(\Hom(T\M\otimes T\M))$, its derivative is the second-order linear differential operator 
\[
\dot{\Re}_\g\sigma = \left.\deriv{}{t}\right|_{t=0} \Re_{\g + t\,\sigma}.
\]
For $\g$ Euclidean, $\dot{\Re}_\g$ coincides with the so-called curl-curl operator; for arbitrary metrics, it corresponds to a Riemannian version of the curl-curl operator plus a  tensorial term.

The overdetermined elliptic systems studied in \cite{KL25} give rise to the following estimate for $\sigma\in W^{s,2}\mathscr{C}^{1,1}(\M)$, $s\ge 2$,
\[
\|\sigma\|_{W^{s,2}(\M)} \lesssim \|\delnabs\sigma\|_{W^{s-1,2}(\M)} + \|\Pn\sigma\|_{W^{s-1/2,2}(\partial\M)}
+ \|\dot{\Re}_\s\sigma\|_{W^{s-2,2}(\M)} + \|\mathscr{P}\sigma\|_{L^2(\M)},
\] 
where $\delnabs:\mathscr{C}^{1,1}(\M) \to \Omega^1(\M)$ is the covariant codifferential, $\Pn$ is the normal projection boundary operator, and $\mathscr{P}$ is the orthogonal projection on the kernel of $\delnabs\oplus\Pn\oplus\dot{\Re}_\s$, which is finite-dimensional and consists of smooth functions. 
This kernel is denoted by $\mathscr{B}^1_1(\M,\s)$ in \cite{KL25}. For $s\ge1$, the kernel of $\delnabs\oplus\Pn$ coincides with the subspace orthogonal to $\mathscr{R}(\Def)$. 
Our minimization problem concerns the case $s=0$, for which the differential operators can be interpreted in a distributional sense, but not the boundary operator, which is a trace operator. 
This can be overcome by restricting the inequality to $\sigma \in (\mathscr{R}(\Def))^\perp$ 
(since $(\mathscr{R}(\Def))^\perp \cap W^{1,2}\mathscr{C}^{1,1}(\M)\subset \ker\Pn$), where a more delicate analysis yields
\[
\|\sigma\|_{L^2(\M)} \lesssim  \|\dot{\Re}_\s\sigma\|_{W^{-2,2}(\M)} + \|\mathscr{P}\sigma\|_{L^2(\M)},
\]
where $\mathscr{P}$ is to be interpreted as a projection onto $\ker \dot{\Re}_\s \cap (\mathscr{R}(\Def))^\perp$
(see \cite[Prop.~4.1]{KL25}, substituting $A_k =\dot{\Re}_\s$ and $A_{k-1}\mathcal{P}_{k-1} = \mathscr{P}$). The reverse inequality
is an immediate consequence of $\dot{\Re}_\s$ being a bounded operator $L^2\to W^{-2,2}$, vanishing on the image of $\mathscr{P}$, i.e.,
\[
\min E_0 = \tfrac14  \|P_\perp\h\|^2_{L^2(\M)} \simeq  \|\dot{\Re}_\s P_\perp \h\|^2_{W^{-2,2}(\M)} + \|\mathscr{P} \h\|^2_{L^2(\M)}.
\]
We have thus expressed the minimal energy as a sum of two terms.
The first term is the square of a norm of the derivative
\[
 \left.\deriv{}{\e}\right|_{\e=0} \Re_{\s + \e P_\perp \h},
\]
as $\s + \e P_\perp \h$ is the ``correct'' metric in terms of energy minimization after the right gauge choice. 
As proved in \cite{KL22}, for the particular case of a manifold of constant sectional curvature, $\dot{\Re}_\s$ annihilates the range of the deformation operator, and hence in that case (and only in that case),
\[
\dot{\Re}_\s P_\perp \h = \dot{\Re}_\s \h = \left.\deriv{}{\e}\right|_{\e=0} \Re_{\geps} =  \left.\deriv{}{\e}\right|_{\e=0} \Re_{\s + \e \h}.
\]

The second term is a projection of the metric variation on the subspace $\mathscr{B}^1_1(\M,\s)$ of those variations that cannot be ``reduced" by a reparametrization, and that to leading order do not change the curvature. The results in \cite{KL25} establish that $\mathscr{B}^1_1(\M,\s)$ is finite-dimensional, but is inconclusive as to whether its dimension is a topological invariant or may also depend on the geometry.  It has long been known that for $(\M,\s)$ a simply-connected  Euclidean domain, $\mathscr{B}^1_1(\M,\s) = \{0\}$. In \cite[Thm.~4.5]{KL22} this result was extended to simply-connected manifolds having constant positive sectional curvature. A simple perturbative argument show that  $\mathscr{B}^1_1(\M,\s) = \{0\}$ for $\M$ simply-connected and $\s$ a small enough $C^2$-perturbation of a metric having constant non-negative curvature.
To conclude, we have established the following:
\begin{theorem}
\label{thm:energy_estimate}
Assume that $\M$ is a simply-connected, compact manifold (possibly with boundary) and $\s$ is a small enough $C^2$-perturbation of a metric having constant non-negative curvature (or, more generally, if $(\M,\s)$ satisfies $\mathscr{B}^1_1(\M,\s) = \{0\}$).
Then the minimal energy scales with the square of the $W^{-2,2}$-norm of the curvature variation of the gauge-corrected metric,
\[
\min E_0  \simeq  \left\|\left.\deriv{}{\e}\right|_{\e=0} \Re_{\s + \e P_\perp \h}\right\|^2_{W^{-2,2}(\M)}.
\]
In the case of constant sectional curvature, one can replace $P_\perp \h$ by $\h$.
\end{theorem}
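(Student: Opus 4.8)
The plan is to assemble the theorem from material already set up in this section. There are three inputs: (i) the elementary identity $\min E_0 = \tfrac14\|P_\perp\h\|_{L^2(\M)}^2$; (ii) the $L^2$-endpoint of the overdetermined elliptic estimate of \cite{KL24c}, applied to tensors orthogonal to $\mathscr{R}(\Def)$; and (iii) the vanishing of the finite-dimensional obstruction space $\mathscr{B}^1_1(\M,\s)$ under the stated hypotheses. The first two are essentially bookkeeping given the discussion preceding the statement; the real work is (iii).

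For (i): since $E_0(u) = \tfrac14\|\mathcal{L}_u\s - \h\|_{L^2(\M)}^2$ and $\mathscr{R}(\Def)$ is a closed subspace of $L^2\mathscr{C}^{1,1}(\M)$, the infimum is attained when $\mathcal{L}_u\s = P_\parallel\h$ (possible because $P_\parallel\h\in\mathscr{R}(\Def)$), leaving residual $P_\perp\h$ and giving $\min E_0 = \tfrac14\|P_\perp\h\|_{L^2(\M)}^2$. For (ii): apply the estimate of \cite{KL24c} to $\sigma = P_\perp\h\in(\mathscr{R}(\Def))^\perp$, noting that $\mathscr{P}\sigma = \mathscr{P}\h$ since $\operatorname{im}\mathscr{P} = \mathscr{B}^1_1(\M,\s)$ is orthogonal to $\mathscr{R}(\Def)$; this yields $\|P_\perp\h\|_{L^2}\lesssim\|\dot{\Re}_\s P_\perp\h\|_{W^{-2,2}(\M)} + \|\mathscr{P}\h\|_{L^2(\M)}$, while the reverse inequality is immediate from the boundedness of $\dot{\Re}_\s\colon L^2\to W^{-2,2}$ together with $\dot{\Re}_\s\circ\mathscr{P} = 0$. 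Hence
\[
\min E_0 \;\simeq\; \|\dot{\Re}_\s P_\perp\h\|_{W^{-2,2}(\M)}^2 + \|\mathscr{P}\h\|_{L^2(\M)}^2 .
\]

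The main obstacle is (iii): showing $\mathscr{B}^1_1(\M,\s) = \{0\}$, so that the second term disappears. For a model metric $\s_0$ of constant non-negative curvature this is classical in the flat simply-connected case, and is \cite[Thm.~4.5]{KL22} in the constant positive curvature case. To pass to a small compactly-supported $C^2$-perturbation $\s$ of $\s_0$ (the compact support ensuring that the perturbation is disjoint from $\partial\M$, so the boundary operator $\Pn$ is unaffected), I would invoke stability of the kernel of the overdetermined elliptic system $\delnabs\oplus\Pn\oplus\dot{\Re}_\s$ of \cite{KL24c}: the system depends continuously on the metric in the relevant operator topology, ellipticity and the boundary (Lopatinski--Shapiro) condition are open, and for such systems the kernel is finite-dimensional with dimension that does not increase under small perturbations; since it is $0$ at $\s_0$ it remains $0$ for $\s$ close to $\s_0$. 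Making this semicontinuity precise inside the functional-analytic framework of \cite{KL24c} is the point requiring care --- this is the ``simple perturbative argument'' mentioned in the text.

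Granting $\mathscr{P} = 0$, we obtain $\min E_0 \simeq \|\dot{\Re}_\s P_\perp\h\|_{W^{-2,2}(\M)}^2$, and by the definition $\dot{\Re}_\s\sigma = \left.\deriv{}{\e}\right|_{\e=0}\Re_{\s+\e\sigma}$ this is exactly $\big\|\left.\deriv{}{\e}\right|_{\e=0}\Re_{\s+\e P_\perp\h}\big\|_{W^{-2,2}(\M)}^2$, which is the first assertion. Finally, in the constant-sectional-curvature case \cite{KL22} shows that $\dot{\Re}_\s$ annihilates $\mathscr{R}(\Def)$; writing $\h = P_\parallel\h + P_\perp\h$ with $P_\parallel\h\in\mathscr{R}(\Def)$ then gives $\dot{\Re}_\s P_\perp\h = \dot{\Re}_\s\h$, so $P_\perp\h$ may be replaced by $\h$, completing the proof.
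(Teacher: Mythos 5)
Your proposal is correct and follows essentially the same route as the paper: both rely on the identity $\min E_0 = \tfrac14\|P_\perp\h\|^2_{L^2(\M)}$, the $s=0$ endpoint of the overdetermined elliptic estimate of \cite{KL24c} restricted to $(\mathscr{R}(\Def))^\perp$, and the vanishing of the obstruction space $\mathscr{B}^1_1(\M,\s)$ via \cite{KL22} together with a perturbative argument. Your elaboration of that last step (upper semicontinuity of the kernel dimension for the elliptic boundary-value system under small compactly-supported metric perturbations) is precisely the ``simple perturbative argument'' the paper alludes to without spelling out.
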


This result, when the target manifold is Euclidean space, proves a linear Riemannian version of a conjecture 
by Lewicka and Mahadevan \cite{LM22}, whereby for a body manifold $(\M,\g)$ immersed in Euclidean space of same dimension, the infimal elastic energy scales like the square of the $W^{-2,2}$ norm of the curvature, as hinted already by the very rudimentary lower bound obtained in \cite{KS12}. 

\section{Discussion: alternative displacement notion}
\label{sec:discussion}

As mentioned in the introduction, there are manifolds $(\S,\s)$ that do not satisfy the quantitative rigidity property of Definition~\ref{def:FJM}. 
Take for example a Riemannian manifold for which $\Iso(\S)=\{\id\}$, which has an open submanifold $\M'$ isomorphic to a Euclidean domain, and let $\M$ be a compactly-embedded open submanifold of $\M'$. Then there exist non-trivial isometric embeddings of $\M$ in $\M'$, thus violating the inequality in Definition~\ref{def:FJM}.
This example is not generic, as for a generic metric on a Riemannian manifold, any open subset would only have the inclusion map as an isometric immersion. Thus we expect the rigidity property to hold for these generic metrics, and also for symmetric spaces, in which ``all points are the same''.

However, if one restricts the definition and requires the rigidity to hold only in the case $\M = \S$, then this property is satisfied by any closed manifold, as was recently shown by Conti, Dolzmann and M\"uller \cite{CDM24}.  Thus, our limit theory holds for this case as well.

Moreover, we are  not familiar with any counterexamples for a closed manifold if one replaces in Definition~\ref{def:FJM} $\Psi\in \Iso(\S)$ by $\Psi:\M\to \S$ an isometric immersion. 
In a sense, the latter is the more natural assumption, as isometric immersions are the zero energy configurations of the energy $\|\dist(df,\SO(\s,f^*\s))\|_{L^2(\M)}$, and maps of asymptotically vanishing energy converge to isometric immersions \cite{KMS19}.

In order for such a weaker quantitative rigidity property to guarantee compactness property as in Proposition~\ref{prop:3.1}, we need to modify accordingly the definition of convergence of displacements (Definition~\ref{def:limit}), i.e., to require that $\Psi$ and $\Feps$ be isometric immersions of $\M$ into $\S$.
For $\S$ compact, the analysis in the rest of the paper carries through under this wider definition of convergence, however, some of the proofs become more technical due to the fact that the set of isometric immersions has a more complicated structure than $\Iso(\S)$. 
We now sketch the main adaptations that need to be done (the following are merely a proof outline): 
\begin{itemize} 
\item Unlike $\Iso(\S)$, the space of isometric immersions $\M\to \S$ is not necessarily a manifold. 
It is still finite-dimensional in the sense that, fixing a point $p_0\in \M$, an isometric immersion is uniquely determined by its value and its derivative at $p_0$. 
This suffices for the space of isometries to be compact.

\item The limit of configurations in this case is still a covariant derivative of a displacement vector field, $\nabs u$, as in Proposition~\ref{prop:J_is_a_gradient}.
The proof is similar, though more technical, as one cannot reduce it to the case of $\Feps = \iota$, since, even if all the $\Feps$ are embeddings, the composition $\Feps^{-1}\circ \hfeps$ is not defined.

\item Unlike Proposition~\ref{prop:limit_unique}, the limit is unique not modulo a Killing field of $(\S,\s)$, but modulo a Killing field of $(\M,\s)$, i.e., vector fields $u\in \mathfrak{X}(\M)$, satisfying $\mathcal{L}_u \s = 0$ (these are exactly the fields that do not affect the limit energy).
A key for proving this is proving a version of Lemma~\ref{lem:isometries} for isometric immersions $\Feps$ converging to some limit $\Psi$ at rate $\e$, which should hold because of the finite-dimensionality of the space of isometric immersions.

\item The $\Gamma$-convergence and compactness results remain unchanged.
\end{itemize}

\newcommand{\etalchar}[1]{$^{#1}$}
\providecommand{\bysame}{\leavevmode\hbox to3em{\hrulefill}\thinspace}
\providecommand{\MR}{\relax\ifhmode\unskip\space\fi MR }
\providecommand{\MRhref}[2]{%
  \href{http://www.ams.org/mathscinet-getitem?mr=#1}{#2}
}
\providecommand{\href}[2]{#2}

\end{document}